\documentclass[a4paper,10pt,reqno]{amsart}

\usepackage[UKenglish]{babel}
\usepackage[hmargin=2.5cm,bmargin=2.5cm,tmargin=2.2cm]{geometry}
\usepackage{amsmath}
\usepackage{amssymb}
\usepackage{amsthm}
\usepackage{verbatim}
\usepackage{stackrel}
\usepackage[arrow, matrix, curve]{xy} %for diagrams of implications

\usepackage[shortlabels]{enumitem}

\usepackage{comment,cite}	
\usepackage{hyperref}
\hypersetup{
    colorlinks=true,
    linkcolor=blue,
    citecolor=red,
    filecolor=magenta,      
    urlcolor=cyan,
    pdftitle={BIBO stability: From finite to infinite dimensions},
    bookmarks=true,
    linktocpage=true,
}

\usepackage[utf8]{inputenc}
\usepackage{booktabs,tabularx,capt-of}
\usepackage{mathtools}
\usepackage[thmtools-compat]{keytheorems}
\usepackage{bbm}
\usepackage{orcidlink}

\usepackage{marginnote}
\usepackage{color}

\usepackage{stmaryrd}
\usepackage{tikz}
\usepackage{tikz-cd}

\newcommand{\bbC}{\mathbb{C}}

\newcommand{\bbN}{\mathbb{N}}

\newcommand{\bbQ}{\mathbb{Q}}
\newcommand{\bbR}{\mathbb{R}}

\newcommand{\bbT}{\mathbb{T}}

\newcommand{\calF}{\mathcal{F}}

\newcommand{\calL}{\mathcal{L}}

\newcommand{\calS}{\mathcal{S}}

\DeclareMathOperator{\id}{id} % identity operator
\DeclareMathOperator{\one}{{\mathbbm{1}}} % constant function with value one
\DeclareMathOperator{\re}{Re} % real part
\newcommand{\argument}{\mathord{\,\cdot\,}} % argument dot for functions (with correct spacing)
\newcommand{\dx}{\;\mathrm{d}} % differential (for use at the end of integrals)
\newcommand{\dxShort}{\mathrm{d}} % differential without extra space (for use in differential quotients)
\newcommand{\norm}[1]{\left\lVert #1 \right\rVert} % norm
\newcommand{\modulus}[1]{\left\lvert #1 \right\rvert} % modulus
\newcommand{\duality}[2]{\left\langle#1\, ,\, #2\right\rangle} % duality / scalar product
\newcommand{\dom}[1]{\operatorname{dom}\left(#1\right)} % domain of an operator
\DeclareMathOperator{\Ima}{Rg} % image / range of a mapping
\newcommand\restrict[1]{\raisebox{-.5ex}{$|$}_{#1}} %restriction of a map

\newcommand{\resSet}{\rho}
\newcommand{\spr}{r} % spectral radius
\newcommand{\gbd}{\omega_0} % growth bound of a C_0-semigroup

\newcommand{\systemNode}[4]{\Sigma\left(#1,\allowbreak #2,\allowbreak #3,\allowbreak #4\right)}
\newcommand{\systemNodeSt}{\systemNode{A}{B}{C}{\mathbf{G}}}
\newcommand{\resolvent}[2]{\left( #1  - #2 \right)^{-1}}
\newcommand{\lpSpaceDT}[3]{L^{#1}(#2; #3)}
\newcommand{\lpSpaceD}[2]{L^{#1}(#2)}
\newcommand{\lpSpace}[1]{L^{#1}}

\newcommand{\loc}{\mathrm{loc}}

\newcommand{\Implies}[2]{``\ref{#1} $\Rightarrow$ \ref{#2}'':}

\theoremstyle{definition}
\newtheorem{definition}{Definition}[section]
\newtheorem{remark}[definition]{Remark}
\newtheorem{remarks}[definition]{Remarks}
\newtheorem*{remark*}{Remark}
\newtheorem*{remarks*}{Remarks}
\declaretheorem[style=definition,qed=$\blacktriangle$,sibling=definition]{example}

\theoremstyle{plain}
\newtheorem{proposition}[definition]{Proposition}
\newtheorem{lemma}[definition]{Lemma}
\newtheorem{theorem}[definition]{Theorem}
\newtheorem{corollary}[definition]{Corollary}

\numberwithin{equation}{section} % enumerate formulas within sections

\begin{document}

\title[BIBO stability: From finite to infinite dimensions]{BIBO Stability in Infinite Dimensions: Extensions and Limitations of Finite-Dimensional Theory}
\author[S.~Arora]{Sahiba Arora \orcidlink{0000-0003-1973-8358}}
\address[Sahiba Arora]{Institut für Analysis, 
Leibniz Universität Hannover, Welfengarten 1, 30167 Hannover, Germany}
\email{sahiba.arora@math.uni-hannover.de}
\author[A.A.~Wierzba]{Alexander A. Wierzba \orcidlink{0000-0002-2013-0246}}
\address[Alexander A. Wierzba]{Fakultät für Mathematik und Naturwissenschaften, Bergische Universität Wuppertal, Gaußstraße 20, 42119 Wuppertal, Germany}

\email{alexander.wierzba@uni-wuppertal.de}
%\thanks{This work was partially funded by the Deutsche Forschungsgemeinschaft (DFG, German Research Foundation) – Project-ID 531152215 – CRC 1701.}
\thanks{The second author acknowledges funding by the Deutsche Forschungsgemeinschaft (DFG, German Research Foundation) – Project-ID 531152215 – CRC 1701.}

\subjclass[2020]{
93D25, % Input-output approaches in control theory
93C05, % Linear systems in control theory
47D06, % One-parameter semigroups and linear evolution equations
93C20% Control/observation systems governed by partial differential equations
}
\keywords{BIBO stability, infinite-dimensional system, input-output stability}
\date{\today}
\begin{abstract}
The study of bounded-input bounded-output (BIBO) stability of
infinite-dimensional systems gives rise to difficulties that do not
occur in the classical finite-dimensional setting.
In this paper, we resolve several questions concerning the
definition and characterisation of BIBO stability in the framework
of system nodes.
In particular, we show that $\mathrm C^\infty$-BIBO stability
need not imply $\lpSpace{\infty}$-BIBO stability in general,
but that the two notions are equivalent when the output space
is reflexive.
Furthermore, we investigate to what extent classical results extend
to the infinite-dimensional setting, including the relation between
BIBO stability and exponential stability under controllability and
observability assumptions.
\end{abstract}

\maketitle

\section{Introduction}
    \label{sec:intro}

Bounded-input bounded-output (BIBO) stability is a classical notion in the study of finite-dimensional linear systems 
\begin{equation*}
    \begin{array}{ccc}
        \dot{x}(t)&=&Ax(t)+Bu(t)\\
        y(t)&=&Cx(t)+Du(t)
    \end{array}
    \quad \text{for } t\ge0,
\end{equation*}
where $A\in\bbC^{n_X \times n_X}$, $B\in\bbC^{n_X \times n_U}$, $C\in\bbC^{n_Y \times n_X}$, and $D\in\bbC^{n_Y \times n_U}$ are matrices of appropriate size. With zero initial state, such a system is called BIBO stable if
there exists a constant $c>0$ such that
\[
    \norm{y}_{\lpSpaceDT{\infty}{[0,t]}{\bbC^{n_Y}}}
    \le
    c\norm{u}_{\lpSpaceDT{\infty}{[0,t]}{\bbC^{n_U}}}
    \qquad\text{for every }t>0
\]
and every locally essentially bounded input $u$.
Thus, bounded inputs produce bounded outputs with a gain independent
of the time horizon.
BIBO stability has applications, for instance, in signal processing
\cite{b_TanJian19_DigitalSignalProcessing} and the design of feedback
controllers~\cite{b_Nise20_ControlSystemsEngineering}.

The corresponding concept for \emph{infinite-dimensional} systems, as they appear for instance in the study of boundary control systems modelled by partial differential equations, has similarly been considered~\cite{a_CallierDesoer1978, a_WangCobb96BIBOTimeInv, a_WangCobb03BIBOTimeVar} and has lately seen renewed interest~\cite{a_UnserNoteOnBiboStability, a_AbusaksakaPartingtonBIBO2013, SchwenningerWierzbaZwart2024, a_HastirEtAl23NonlinearBIBO}. One motivation is its role in control techniques such as funnel control
\cite{a_IlchmannRyanTrennFunnel2005,
a_BergerPucheSchwenningerFunnel2020}.
In particular,~\cite{SchwenningerWierzbaZwart2024} initiated a systematic
study of BIBO stability in the framework of system nodes, which includes
boundary control systems and allows unbounded control and observation
operators.
We continue this study, focusing on the distinctions that arise when
the input or output space is infinite-dimensional and on the relation
between BIBO stability and stability of the state dynamics.

\subsection{Contributions}

In finite dimensions, BIBO stability admits a classical
characterisation, and its relation to exponential stability is
well understood.
In infinite dimensions, both the choice of solution concept and the
properties of the input and output spaces become relevant.
We investigate which finite-dimensional results extend to this
setting, which fail, and which additional assumptions recover them.
Our contributions fall into four groups.
\begin{enumerate}[\upshape (1)]
    \item 
    {\bf Different notions of BIBO stability and characterisation via measures} 
    (Section~\ref{sec:BIBOnotionsAndImpulseResponse}).
    The input class and the solution concept give rise to different
    notions of BIBO stability.
    For system nodes with finite-dimensional input and output
    spaces, these notions are equivalent and hold precisely when
    the transfer function is the Laplace transform of an
    operator-valued measure of bounded total variation.
    We show the following:
    \begin{enumerate}
        \item 
        An $\lpSpace{\infty}$-BIBO stable system need not have a transfer
        function that is the Laplace transform of a measure of bounded
        total variation (Example~\ref{ex:systemNodeCounterexampleBddVariation}).
        
        \item 
        A $\mathrm C^\infty$-BIBO stable system need not be
        $\lpSpace{\infty}$-BIBO stable (Example~\ref{ex:c-infty-but-not-l-infty}),
        answering a question left open in
        \cite[Section~7]{a_SchwenningerWierzba24_ADualNotionToBIBOStability}.
        
        \item 
        For a $\mathrm C^\infty$-BIBO stable system,
        $\lpSpace{\infty}$-BIBO stability is equivalent to the
        existence of a locally Bochner-integrable representative
        for every zero-state generalised output generated by an
        input in $\lpSpace{\infty}_{\loc}(\bbR_+; U)$
        (Proposition~\ref{prop:cinfty-bibo-existing-linfty-output-estimate}
        and Corollary~\ref{cor:cinfty-bibo-plus-linfty-outputs-implies-linfty-bibo}).

        \item Consequently, when the output space is reflexive,
        $\mathrm C^\infty$-BIBO stability implies $\lpSpace{\infty}$-BIBO
        stability 
        (Theorem~\ref{thm:reflexive-output-cinfty-implies-linfty-bibo}),
        yielding corresponding $\lpSpace{\infty}$-BIBO criteria.
    \end{enumerate}
    
    \item 
    {\bf BIBO and exponential stability} 
    (Section~\ref{sec:BIBOandStability}). 
    In finite dimensions, exponential stability implies BIBO stability,
    and the converse holds for controllable and observable systems.
    The first implication is known to fail for general
    infinite-dimensional systems.
    We investigate the converse under different controllability
    and observability assumptions:
    \begin{enumerate}
        \item 
        BIBO stability of an approximately controllable and observable system does not imply exponential stability (Example~\ref{ex:simple-counterex-approx-obs-cont-exp-stable}).
        In fact, even an analytic semigroup can be unbounded when
        the system is $\lpSpace{\infty}$-BIBO stable, approximately controllable,
        and finite-time exactly observable
        (Example~\ref{ex:analytic-counterex-approx-obs-cont-not-strongly-stable}).

        \item 
        In contrast, $\lpSpace{\infty}$-BIBO stability does imply exponential
        stability under finite-time $\lpSpace{\infty}$-null-controllability
        and finite-time final observability, together with the
        corresponding control and observation admissibility assumptions
        (Proposition~\ref{prop:null-cont-final-obs-exponStab}).
    \end{enumerate}
    
    \item 
    {\bf BIBO stability of derived scalar-input scalar-output (SISO) systems} 
    (Section~\ref{sec:BIBOSISO}).
    Restricting the input to a fixed direction and applying a
    functional to the output produces a SISO system.
    These derived systems have finite-dimensional input and output
    spaces, so their BIBO stability can be characterised by measures
    of bounded total variation.
    However, uniform BIBO stability of all derived SISO systems is in general not sufficient for BIBO stability of the original system (Example~\ref{exa:siso-bibo-bounded-B-C}). This remains false even when the underlying semigroup is
    exponentially stable (Example~\ref{exa:siso-bibo-unbounded-B-C}).
    
    \item 
    {\bf BIBO stability from scalar-output systems}
    (Section~\ref{sec:BIBOfromScalarOutput}).
    Scalarising only the output yields a positive result:
    if every derived scalar-output system is
    $\mathrm C^\infty$-BIBO stable, then so is the original system
    (Theorem~\ref{thm:OutputScalarisationGivesBIBO}).
    We use this result to derive sufficient conditions in terms of
    integrability of scalarised kernels.
    Additional assumptions on kernel regularity or reflexivity of
    the output space yield corresponding $\lpSpace{\infty}$-BIBO criteria.
\end{enumerate}

\subsection{Notation}

We write $\bbR_+=[0,\infty)$ and $\bbC_\alpha \coloneqq \left\lbrace z \in \bbC \;\middle|\; \re{z} > \alpha \right\rbrace$ for $\alpha\in \bbR$ with the convention $\bbC_{-\infty}\coloneqq \bbC$. All Banach spaces are considered over the field of complex numbers.
The dual of a Banach space $X$ is denoted by $X'$,
with duality pairing $\duality{x'}{x}=x'(x)$.
Let $\calL(X,Y)$ denote the space of bounded linear operators from a Banach space $X$ to a Banach space $Y$. We write $\calL(X)=\calL(X,X)$.
For $T\in\calL(X,Y)$, its Banach-space adjoint is denoted by
$T'\in\calL(Y',X')$.
The domain of an unbounded operator $A$ is denoted by $\dom{A}$.

All $\lpSpace{p}$-spaces of Banach-space-valued functions are Bochner
spaces with respect to Lebesgue measure unless another measure
is specified.
In particular, $\lpSpace{\infty}$ consists of equivalence classes of
strongly measurable, essentially bounded functions.
The notation $\lpSpace{p}_{\loc}(\bbR_+; E)$ means that the restriction
to $[0,t]$ belongs to $\lpSpaceDT{p}{[0,t]}{E}$ for every $t>0$.
We denote by $\one_M$ the indicator function of a measurable set $M$.

If $A$ is the generator of a strongly continuous semigroup $\bbT=(\bbT(t))_{t\ge 0}$ on a Banach space $X$, let $\gbd(\bbT)=\inf\{\omega\in\bbR\colon\sup_{t>0}e^{-t\omega}\norm{\bbT(t)}<\infty\}$ denote its growth bound. 
The semigroup is called exponentially stable if $\gbd(\bbT)<0$. 
The space $X_1$ is defined as $\dom{A}$ 
equipped with the graph norm and the extrapolation space of the semigroup $X_{-1}$ is defined to be the completion of $X$ with respect to the norm $\norm{x}_{X_{-1}} \coloneqq \norm{\resolvent{\beta}{A} x}_X$, for a fixed number $\beta$ in the resolvent set $\resSet(A)$. 
Different choices of $\beta$ give rise to equivalent norms. The canonical embeddings
$X_1\hookrightarrow X\hookrightarrow X_{-1}$
are continuous and dense.
We denote by $(\bbT_{-1}(t))_{t\ge0}$ the extension of
$(\bbT(t))_{t\ge0}$ to $X_{-1}$ and by $A_{-1}$ its generator,
with $\dom{A_{-1}}=X$.
For more details on these concepts see~\cite[Chapter~II]{EngelNagel2000}.

\subsection{System nodes and BIBO stability}

Let $U$, $X$, and $Y$ be Banach spaces. A \emph{system node} on $(U,X,Y)$ is a collection $\systemNodeSt$ where $A: \dom{A} \subseteq X \rightarrow X$ is the generator of a strongly continuous semigroup $\bbT$ on $X$, $B \in \calL(U, X_{-1})$, $C \in \calL(X_1, Y)$, and $\mathbf{G}: \bbC_{\gbd(\bbT)} \rightarrow \calL(U,Y)$ is an analytic function that satisfies 
\begin{equation}
    \label{eq:transferFunctionDifferenceRelation}
    \mathbf{G}(\alpha) - \mathbf{G}(\beta) = C\left[ \resolvent{\alpha}{A_{-1}} - \resolvent{\beta}{A_{-1}} \right] B
    \quad
    \text{for all }\alpha, \beta \in \bbC_{\gbd(\bbT)}
\end{equation}
The spaces $U$, $X$, and $Y$ are usually referred to as \emph{input space}, \emph{state space}, and \emph{output space}, respectively. Furthermore, $B$ is called the \emph{input operator}, $C$ the \emph{output operator}, and $\mathbf{G}$ the \emph{transfer function} of the system node. 

Throughout, $\systemNodeSt$ denotes a system node on
Banach spaces $(U,X,Y)$ as above.
We write $\bbT$ for the $C_0$-semigroup generated by $A$,
and $X_1$ and $X_{-1}$ for the associated domain and
extrapolation spaces, respectively.

Formally, a system node represents the differential equation 
\begin{equation}
\label{eq:systemNodeDifferentialEqns}
    \begin{split}
        \Dot{x}(t) &= A_{-1} x(t) + B u(t) \\
        y(t) &= C\&D \begin{bmatrix} x(t) \\ u(t) \end{bmatrix},
    \end{split}
\end{equation}
where $C \& D: \dom{C \& D} \subseteq X \times U \rightarrow Y$ is the \emph{combined observation/feedthrough operator} defined as
\begin{align*}
    \dom{C\&D} &= \left\{  \begin{bmatrix} x \\ u \end{bmatrix} \in X \times U \colon A_{-1}x + Bu \in X \right\}\\
    \begin{bmatrix} x \\ u \end{bmatrix} &\mapsto C\left[x-\resolvent{\beta}{A_{-1}} B u \right] + \mathbf{G}(\beta) u
\end{align*}
for any choice of $\beta \in \bbC_{\gbd(\bbT)}$.
We equip $\dom{C\&D}$ with the norm
\[
    \norm{
        \begin{bmatrix}x\\u\end{bmatrix}
    }_{\dom{C\&D}}
    =
    \norm{x}_X+\norm{u}_U+\norm{A_{-1}x+Bu}_X.
\]
With this norm, $\dom{C\&D}$ is a Banach space and
$C\&D:\dom{C\&D}\to Y$ is bounded.
For every $\begin{bmatrix}x\\u\end{bmatrix}\in\dom{C\&D}$,
the vector $x-\resolvent{\beta}{A_{-1}}Bu$ belongs to $X_1$,
and the transfer-function identity~\eqref{eq:transferFunctionDifferenceRelation}
ensures that the definition of $C\&D$ is independent of the choice of
$\beta\in\bbC_{\gbd(\bbT)}$.

Equation~\eqref{eq:systemNodeDifferentialEqns} gives rise to two different types of solutions. First, \emph{classical solutions}, that is, triples $(u,x,y) \in \mathrm C(\bbR_+; U) \times \mathrm C^1(\bbR_+; X) \times \mathrm C(\bbR_+; Y)$ such that $\left[ \begin{smallmatrix}
    x(t) \\ u(t)
\end{smallmatrix} \right] \in \dom{C\&D}$ and~\eqref{eq:systemNodeDifferentialEqns} is satisfied for all $t \ge 0$. Second, we consider \emph{generalised distributional solutions},
defined as follows. For $u\in \lpSpace{1}_{\loc}(\bbR_+; U)$ and $x_0\in X$, define
\[
    x(t)
    =
    \bbT(t)x_0
    +
    \int_0^t\bbT_{-1}(t-s)Bu(s)\dx s.
\]
Then $x\in \mathrm C(\bbR_+;X_{-1})$.
By~\cite[Lemma~4.7.9]{Staffans2005},
\[
    t\mapsto
    \int_0^t(t-s)
    \begin{bmatrix}x(s)\\u(s)\end{bmatrix}\dx s
\]
belongs to $\mathrm C(\bbR_+;\dom{C\&D})$.
Hence
\[
    F(t)
    \coloneqq
    (C\&D)\int_0^t(t-s)
    \begin{bmatrix}x(s)\\u(s)\end{bmatrix}\dx s
\]
is a continuous $Y$-valued function.
The generalised output is the distribution $y=F''$ on
$(0,\infty)$.
Following~\cite[Section~2.2]{SchwenningerWierzbaZwart2024},
we use its action on $Y'$-valued test functions:
\[
    y(\varphi)
    =
    \int_0^\infty
        \duality{\varphi''(t)}{F(t)}
    \dx t,
    \qquad
    \varphi\in\mathrm C_c^\infty((0,\infty);Y').
\]
For a scalar test function
$\psi\in\mathrm C_c^\infty((0,\infty))$, we also write
$y(\psi)\coloneqq\int_0^\infty\psi''(t)F(t)\dx t\in Y$.
These actions are compatible:
$y(\psi y')=\duality{y'}{y(\psi)}$
for every such $\psi$ and every $y'\in Y'$.

A function $g\in\lpSpace{1}_{\loc}(\bbR_+;Y)$ represents $y$
if
$y(\varphi)=\int_0^\infty\duality{\varphi(t)}{g(t)}\dx t$
for every $\varphi\in\mathrm C_c^\infty((0,\infty);Y')$.
Equivalently,
$y(\psi)=\int_0^\infty\psi(t)g(t)\dx t$
for every scalar test function
$\psi\in\mathrm C_c^\infty((0,\infty))$.

For each input and initial state, the generalised solution is unique,
and every classical solution is also a generalised solution~\cite[Lemma~4.7.8 and Theorem~4.7.11]{Staffans2005}.
The zero-state input-output map is causal: inputs agreeing almost
everywhere on $(0,t)$ have outputs agreeing distributionally there.

Throughout, inputs in $\mathrm C_c^\infty((0,\infty);U)$ are extended
by zero to $\bbR_+$.
Since these inputs vanish near $t=0$, each admits a unique
classical solution with initial state $x(0)=0$;
see~\cite[Lemma~4.7.8]{Staffans2005}.
The solution concepts above lead to the following notions of
BIBO stability
\cite[Definitions~3.1 and~3.2]{SchwenningerWierzbaZwart2024}.

\begin{definition}
    \label{def:BIBOStability}
    Let $\systemNodeSt$ be a system node. 
    \begin{enumerate}[\upshape (a)]
        \item 
        The system node is called $\mathrm C^\infty$-\emph{BIBO stable} if there exists $c > 0$ such that for each classical solution $(u,x,y)$ with $u \in \mathrm C_c^\infty\left( (0,\infty); U \right)$ and $x(0) = 0$, one has
        \begin{equation}
            \label{eq:BIBOInequality}
            \norm{y}_{\lpSpaceDT{\infty}{[0,t]}{Y}} \le c \norm{u}_{\lpSpaceDT{\infty}{[0,t]}{U}} 
            \quad \text{for all } t > 0.
        \end{equation}

        \item 
        The system node is called $\lpSpace{\infty}$-\emph{BIBO stable} if there exists $c > 0$ such that for each generalised distributional solution $(u,x,y)$ with $u \in \lpSpace{\infty}_{\loc}(\bbR_+; U)$ and $x(0) = 0$ the output distribution $y$ is represented by an element of
        $\lpSpace{\infty}_{\loc}(\bbR_+; Y)$, again denoted by $y$, and
        satisfies~\eqref{eq:BIBOInequality}.
    \end{enumerate}
\end{definition}

\begin{remarks}
\label{rem:BIBO-constants-estimates}
    Throughout these remarks, outputs correspond to zero initial state.
    \begin{enumerate}[\upshape (a)]
        \item For $\mathrm C^\infty$-BIBO stability it is equivalent to require the global estimate
        \begin{equation}
            \label{eq:globalBIBOInequality}
            \norm{y}_{\lpSpaceDT{\infty}{\bbR_+}{Y}} \le c \norm{u}_{\lpSpaceDT{\infty}{\bbR_+}{U}}
        \end{equation}
        for every $u \in \mathrm C_c^\infty\left( (0,\infty);U\right)$.
        Indeed, the local estimate implies the global one since $c$ is independent of $t$. Conversely, fix $u\in \mathrm C_c^\infty((0,\infty);U)$ and $t>0$.
        For  $\varepsilon>0$, choose $\chi_\varepsilon\in \mathrm C^\infty(\bbR_+;[0,1])$
        equal to one on $[0,t]$ and zero on $[t+\varepsilon,\infty)$.
        By causality and the global estimate~\eqref{eq:globalBIBOInequality},
        \[
            \norm{y_u}_{\lpSpaceDT{\infty}{[0,t]}{Y}}
            =
            \norm{y_{\chi_\varepsilon u}}_{\lpSpaceDT{\infty}{[0,t]}{Y}}
            \le
            c\norm{\chi_\varepsilon u}_{\lpSpaceDT{\infty}{\bbR_+}{U}}.
        \]
        Continuity of $u$ ensures
        $
            \norm{\chi_\varepsilon u}_{\lpSpaceDT{\infty}{\bbR_+}{U}}
            \longrightarrow
            \norm{u}_{\lpSpaceDT{\infty}{[0,t]}{U}}
        $
        as $\varepsilon\downarrow0$.
        Thus the local estimate~\eqref{eq:BIBOInequality} holds with the same constant.
        
        \item For $\lpSpace{\infty}$-BIBO stability, it is analogously equivalent to require that for any $u \in \lpSpaceDT{\infty}{\bbR_+}{U}$, the generalised output is represented by an element of $\lpSpaceDT{\infty}{\bbR_+}{Y}$ and satisfies~\eqref{eq:globalBIBOInequality}.

        Again the local estimate implies the global one since $c$ is independent of $t$. For the converse, fix $u\in \lpSpace{\infty}_{\loc}(\bbR_+; U)$ and $t>0$.
        Set $u_t \coloneqq u \one_{[0,t]}\in \lpSpaceDT{\infty}{\bbR_+}{U}$, so its output
        has a representative $y_{u_t}\in \lpSpaceDT{\infty}{\bbR_+}{Y}$.
        By causality, this representative also represents
        $y_u$ on $(0,t)$, and
        \[
            \norm{y_u}_{\lpSpaceDT{\infty}{[0,t]}{Y}}
            \le
            \norm{y_{u_t}}_{\lpSpaceDT{\infty}{\bbR_+}{Y}}
            \le
            c\norm{u_t}_{\lpSpaceDT{\infty}{\bbR_+}{U}}
            =
            c\norm{u}_{\lpSpaceDT{\infty}{[0,t]}{U}}.
        \]
        The representatives agree almost everywhere on
        overlapping intervals.
        Since $t>0$ was arbitrary, they define a representative
        of $y_u$ in $\lpSpace{\infty}_{\loc}(\bbR_+; Y)$.
        Hence,~\eqref{eq:BIBOInequality} holds.

         \item  For either notion of BIBO stability, the
        \emph{optimal BIBO constant} is the smallest $c\ge0$
        for which~\eqref{eq:BIBOInequality} holds for all
        inputs covered by that notion.
        Equivalently, it is the infimum of all admissible
        positive constants; this infimum also satisfies
        the estimate.
        By~(a) and~(b), the local and global estimates have the same optimal constants.
    \end{enumerate}
\end{remarks}

\section{Notions of BIBO stability and measures of bounded total variation}
\label{sec:BIBOnotionsAndImpulseResponse}

We begin with the finite-dimensional characterisation from
\cite[Theorems~3.4 and~3.5]{SchwenningerWierzbaZwart2024}.
Following~\cite[Remark~3.7]{SchwenningerWierzbaZwart2024},
we identify a Laplace transform with its analytic continuation
to the domain of the transfer function whenever such a
continuation exists.

\begin{theorem}
    \label{thm:finiteDimBIBOCharacterisation}
    Let $\systemNodeSt$ be a system node with finite dimensional input and output spaces $U$ and $Y$. Then the following are equivalent:
    \begin{enumerate}[\upshape(i)]
        \item\label{thm:finiteDimBIBOCharacterisation:itm:CInfty}
        $\systemNodeSt$ is $\mathrm C^\infty$-BIBO stable.
        
        \item\label{thm:finiteDimBIBOCharacterisation:itm:LInfty} 
        $\systemNodeSt$ is $\lpSpace{\infty}$-BIBO stable.

        \item\label{thm:finiteDimBIBOCharacterisation:itm:transfer}
        There exists a $\calL(U,Y)$-valued Borel measure $h$
        on $\bbR_+$ of bounded total variation such that
        $\mathbf{G}=\calL\{h\}$ on $\bbC_{\gbd(\bbT)}$.
    \end{enumerate}
\end{theorem}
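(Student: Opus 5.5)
The plan is to prove the cycle \ref{thm:finiteDimBIBOCharacterisation:itm:transfer} $\Rightarrow$ \ref{thm:finiteDimBIBOCharacterisation:itm:LInfty} $\Rightarrow$ \ref{thm:finiteDimBIBOCharacterisation:itm:CInfty} $\Rightarrow$ \ref{thm:finiteDimBIBOCharacterisation:itm:transfer}. The key tool is that, for zero initial state and sufficiently regular inputs, the generalised output has Laplace transform $\mathbf G\hat u$ on a right half-plane. This follows from the definition of $C\&D$ and the resolvent identity~\eqref{eq:transferFunctionDifferenceRelation} by taking Laplace transforms of $F$: one gets $\hat F(\lambda)=\lambda^{-2}\mathbf G(\lambda)\hat u(\lambda)$ for $\re\lambda$ large, so $y=F''$ corresponds to $\mathbf G\hat u$. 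Identifying $\calL(U,Y)$ with matrices, an operator-valued measure of bounded variation is simply a matrix of complex Borel measures of bounded variation.

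For \ref{thm:finiteDimBIBOCharacterisation:itm:transfer} $\Rightarrow$ \ref{thm:finiteDimBIBOCharacterisation:itm:LInfty}, take $u\in\lpSpaceDT{\infty}{\bbR_+}{U}$ and define $g=h*u$, that is, $g(t)=\int_{[0,t]}\mathrm dh(s)\,u(t-s)$. Then $\norm{g}_\infty\le |h|(\bbR_+)\norm{u}_\infty$, and $g$ is measurable because everything is finite-dimensional. Moreover $\hat g=\calL\{h\}\hat u=\mathbf G\hat u$ on a right half-plane. Since $y$ and $g$ are both Laplace transformable distributions on $(0,\infty)$ with the same transform, uniqueness of the Laplace transform shows that $g$ represents $y$. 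Remark~\ref{rem:BIBO-constants-estimates}(b) then yields the local estimate. The implication \ref{thm:finiteDimBIBOCharacterisation:itm:LInfty} $\Rightarrow$ \ref{thm:finiteDimBIBOCharacterisation:itm:CInfty} is immediate: for $u\in\mathrm C_c^\infty$, the classical output coincides with the generalised one and hence with its $\lpSpace{\infty}$ representative almost everywhere.

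The main step is \ref{thm:finiteDimBIBOCharacterisation:itm:CInfty} $\Rightarrow$ \ref{thm:finiteDimBIBOCharacterisation:itm:transfer}. By finite dimensionality, it suffices to treat the scalar entries $\langle e_j', \mathbf G(\cdot) e_i\rangle$, which inherit $\mathrm C^\infty$-BIBO stability. So assume $U=Y=\bbC$. The zero-state map $u\mapsto y$ on $\mathrm C_c^\infty((0,\infty))$ is linear, causal and shift-invariant. By $\mathrm C^\infty$-BIBO stability it is also bounded in sup-norm with constant $c$.
\begin{enumerate}[\upshape(1)]
\item Fix $t>0$ and consider the functional $u\mapsto y(t)$. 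It is bounded in the sup-norm, and it depends only on $u|_{[0,t]}$ by causality. Hence, by Hahn--Banach and Riesz, it is given by a measure $\mu_t$ on $[0,t]$ with $|\mu_t|\le c$.
\item Use time invariance to see that these measures are compatible after reflection, so that $\mu_t(\mathrm ds)$ corresponds to $h(t-\mathrm ds)$ for a single measure $h$ on $\bbR_+$. This gives $y=h*u$ and $|h|(\bbR_+)\le c$.
\item Take Laplace transforms to obtain $\mathbf G\hat u=\hat h\hat u$ for all test inputs. Hence $\mathbf G=\calL\{h\}$ on a right half-plane, and by analytic continuation this holds on $\bbC_{\gbd(\bbT)}$.
\end{enumerate}

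I expect the obstacle to be the second step. Hahn--Banach extensions are not unique, so a canonical $h$ has to be selected. My first approach is a weak-$*$ argument instead: choose an approximate identity $\psi_n\in\mathrm C_c^\infty((0,\infty))$ of nonnegative bumps with integral one concentrating at $0^+$, and set $h_n=y_{\psi_n}$. Since the sup-norm of $\psi_n$ blows up, the bound $\norm{y_{\psi_n}}_\infty\le c\norm{\psi_n}_\infty$ does not control $h_n$ in $L^1$. Instead I would bound $\int|h_n|\,\mathrm dt$ by duality: test $y_{\psi_n}$ against bounded $v$ and use the adjoint (time-reversed) system, so that $\int v\, h_n=(y_{\check v}*\psi_n)$, which is at most $c\norm{v}_\infty$. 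This gives $\norm{h_n}_{L^1}\le c$, and a weak-$*$ limit in the measures then yields $h$. If this duality manipulation is too delicate, I would instead cite the classical characterisation of bounded translation-invariant operators on $\mathrm C_0$-type spaces as convolutions with finite measures.
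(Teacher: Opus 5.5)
Your proposal is correct and follows essentially the route the paper attributes to the cited proof of Schwenninger--Wierzba--Zwart. The substantive implication (i)$\Rightarrow$(iii) comes from representing the bounded, causal, shift-invariant scalar input--output map as convolution with a finite measure and then identifying $\mathbf{G}=\calL\{h\}$ by Laplace uniqueness, while (iii)$\Rightarrow$(ii)$\Rightarrow$(i) is routine. The non-uniqueness you worry about in step (2) is harmless: restrictions to $[0,t]$ of inputs in $\mathrm C_c^\infty((0,\infty))$ are sup-norm dense in $\{f\in\mathrm C[0,t]: f(0)=0\}$, so $\mu_t$ is unique as a measure on $(0,t]$ and time invariance gives the compatibility directly (your weak-$*$ alternative also works).
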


The proof of this theorem in~\cite{SchwenningerWierzbaZwart2024}
uses convolution representations of bounded, causal, and
shift-invariant operators between scalar-valued function spaces.
Such representations do not extend in the same form to arbitrary
Banach-valued function spaces, so one cannot expect the full
equivalence to persist in infinite dimensions.
Nevertheless, the implication
\ref{thm:finiteDimBIBOCharacterisation:itm:transfer}$\Rightarrow$\ref{thm:finiteDimBIBOCharacterisation:itm:CInfty}
remains valid for arbitrary Banach input and output spaces,
with BIBO constant $\modulus{h}(\bbR_+)$.
The following examples show that neither
\ref{thm:finiteDimBIBOCharacterisation:itm:CInfty}$\Rightarrow$\ref{thm:finiteDimBIBOCharacterisation:itm:LInfty}
nor
\ref{thm:finiteDimBIBOCharacterisation:itm:LInfty}$\Rightarrow$\ref{thm:finiteDimBIBOCharacterisation:itm:transfer}
remains valid in general.

\begin{example}[$\lpSpace{\infty}$-BIBO stability without a measure of bounded total variation]
\label{ex:systemNodeCounterexampleBddVariation}
    Consider the system node $\systemNodeSt$ with spaces $X = U = Y = \mathrm C[0,1]$, operators
    \[
        A : X \rightarrow X, \quad f \mapsto\left( \xi \rightarrow - \xi \, f(\xi) \right), 
    \]
    $B = -A$ and $C = \id$, and transfer function $\mathbf{G}(s) = \resolvent{s}{A} B$.

    Since the operators are bounded and the feedthrough term is zero,
    every zero-state generalised output corresponding to
    $u\in \lpSpace{\infty}_{\loc}(\bbR_+; U)$ is represented by the
    continuous $Y$-valued function
    $
        y(t)(\xi)
        =
        \int_0^t \xi e^{-(t-s)\xi}u(s)(\xi)\dx s
    $,
    for $t\ge0, \xi\in[0,1]$.
    Consequently,
    \[
        \modulus{y(\tau)(\xi)} \le \norm{u}_{\lpSpaceDT{\infty}{[0,t]}{U}} \int_0^{\tau} \xi e^{-(\tau-s)\xi}\dx s = \norm{u}_{\lpSpaceDT{\infty}{[0,t]}{U}} (1-e^{-\tau \xi})
    \]
    for all $0 \le \tau \le t$ and for $\xi\in [0,1]$.
    In particular, $y \in \lpSpace{\infty}_{\loc}(\bbR_+; Y)$ and $\norm{y}_{\lpSpaceDT{\infty}{[0,t]}{Y}} \le \norm{u}_{\lpSpaceDT{\infty}{[0,t]}{U}}$ for all $t>0$. Thus, the system node is $\lpSpace{\infty}$-BIBO stable.

    Furthermore, set
    $
        k(t)\coloneqq C\bbT(t)B
    $
    for $t\ge0$. Since $A$ is bounded, $k$ is norm-continuous and is the inverse Laplace transform of $\mathbf{G}$ on $\bbC_0$.
    However,
    \[
        \norm{k(t)}_{\calL(U,Y)}
        =
        \sup_{\xi\in[0,1]}\xi e^{-t\xi}
        =
        \begin{cases}
            e^{-t}, & 0\le t\le1,\\[0.2em]
            1/(et), & t>1
        \end{cases}
    \]
    implies that
    $
        \int_0^\infty\norm{k(t)}_{\calL(U,Y)}\dx t=\infty
    $.
    If $\mathbf{G}=\calL\{h\}$ for a measure $h$ of bounded
    total variation, scalarisation and uniqueness of scalar
    Laplace transforms, after exponential weighting, give
    $h(E)=\int_E k(t)\dx t$ for every bounded Borel set
    $E\subseteq\bbR_+$.
    Hence $\modulus{h}([0,R])=\int_0^R\norm{k(t)}\dx t$ for every
    $R>0$, contradicting bounded total variation. \qedhere
\end{example}

\begin{remark}
    In Example~\ref{ex:systemNodeCounterexampleBddVariation},
    the following additional properties hold. 
    The terminology concerning controllability and observability
    is recalled at the beginning of Section~\ref{sec:BIBOandStability}.
    \begin{enumerate}[\upshape (a)]
        \item 
        The semigroup is not exponentially stable as $\norm{\bbT(t)} = 1$ for all $t \ge 0$.
        
        \item
        Since $C=\id$ and
        $\norm{\bbT(t)f}_X\ge e^{-t}\norm{f}_X$,
        the system is exactly $\lpSpace{p}$-observable in every time
        $\tau>0$, for every $p\in[1,\infty]$.

        \item 
        The impulse response has the norm-continuous density
        $k=C\bbT(\argument)B\in
        \lpSpace{1}_{\loc}(\bbR_+; \calL(U,Y))$,
        but $k\notin\lpSpaceDT{1}{\bbR_+}{\calL(U,Y)}$.
        
        \item 
        Choosing instead $X=U=Y= \{f\in \mathrm C[0,1]: f(0)=0\}$
        yields an $\lpSpace{\infty}$-BIBO stable system whose semigroup
        is strongly stable and which is approximately controllable
        and finite-time exactly observable.
        Indeed, $\bbT(t)f\to0$ in $X$ for every $f\in X
        $,
        and $B=-A$ has dense range because
        $\xi(\xi+\varepsilon)^{-1}f(\xi)\to f(\xi)$ uniformly
        as $\varepsilon\downarrow0$.
    \end{enumerate}
\end{remark}

\begin{example}[$\mathrm C^\infty$-BIBO stability without $\lpSpace{\infty}$-BIBO stability]
    \label{ex:c-infty-but-not-l-infty}
    Consider the transport system
    \begin{align*}
        \partial_t x(t,\theta)=\partial_\theta x(t,\theta), \qquad & t\ge 0, \theta\in(-\tfrac12,0)\\
        x(t,0)=u(t), \quad x(0,\theta)=0\qquad & t\ge0, \theta\in[-\tfrac12,0]\\
        y(t,r) = x(t,-r) - u(t) \qquad & t\ge 0, r \in [0,\tfrac12].
    \end{align*}
    Let $X= \lpSpaceD{1}{\left[-\frac12,0\right]}$, $U=\bbC$, and $Y= \mathrm C\left[0,\frac12\right]$. On $(U,X,Y)$ the above boundary control system can be realized as a system node $\systemNodeSt$ as follows: The operator $A$ is the generator of the nilpotent left-shift semigroup on $X$ and the control operator is given by $B= \delta_0 \in X_{-1}$.
    Moreover, the observation $C\in \calL(X_1,Y)$ is the reflection operator defined by $(Cf)(r)=f(-r)$ for all $r\in \left[0,\frac12\right]$. 
    The output equation specifies the combined
    observation/feedthrough operator by
    \[
        \dom{C\&D}
        =
        \left\{
            \begin{bmatrix}f\\v\end{bmatrix}
            \in W^{1,1}\bigl((-\tfrac12,0)\bigr)\times\bbC:
            f(0)=v
        \right\},
        \qquad
        \left(
            C\&D\begin{bmatrix}f\\v\end{bmatrix}
        \right)(r)
        =
        f(-r)-v.
    \]

    Let $u\in \mathrm C_c^\infty((0,\infty))$ and let $\widetilde u$
    denote its extension by zero to $\bbR$.
    The corresponding zero-state classical solution satisfies
    \[
        x(t,\theta)=\widetilde u(t+\theta),
        \qquad
        y(t)(r)=\widetilde u(t-r)-u(t).
    \]
    Hence,
    $
        \norm{ y }_{ \lpSpaceDT{\infty}{[0,t]}{Y}   }\le 2 \norm{u}_{\lpSpaceDT{\infty}{[0,t]}{U}}
    $
    for every $t>0$.
    Thus the system is $\mathrm C^\infty$-BIBO stable.

    Now, consider the bounded input 
    $u = \one_{[1,\infty)} \in \lpSpace{\infty}_{\loc}(\bbR_+; U)$ and
    let $(u,x,y)$ denote the corresponding zero-state generalised
    distributional solution.
    Extend $u$ by zero to negative times and define
    \[
        F(t)
        \coloneqq
        \frac12(t-1)^2\one_{[1,\infty)}(t),
        \qquad t\in\bbR.
    \]
    For $t\ge0$, this equals
    $\int_0^t(t-s)u(s)\dx s$.
    The distributional-output formula gives
    \[
        y(t)
        =
        \frac{\dxShort^2}{\dxShort t^2}
        \left(r\mapsto F(t-r)-F(t)\right).
    \]
    Consequently, for each fixed $r\in[0,\tfrac12]$,
    evaluation at $r$ gives the scalar distribution
    $u(\argument-r)-u(\argument)$.
    
    Suppose that $y$ were represented by an element of
    $\lpSpace{1}_{\loc}(\bbR_+; Y)$.
    Then, for each $q\in[0,\tfrac12]$, one has
    \[
        y(t)(q)= u(t-q)-u(t) \quad \text{for almost every } t\in \left(1,\frac32\right).
    \]
    In particular, there exists a set of full measure $M\subseteq \left(1,\frac32\right)$ such that
    \[
        y(t)(q)= u(t-q)-u(t) \quad \text{for all } t\in M, q\in \left[0,\frac12\right] \cap \bbQ.
    \]
    For fixed $t\in M$, this gives
    \[
        y(t)(q) 
        = 
        \begin{cases}
            0, \qquad &q< t-1, q\in \bbQ\\
           -1, \qquad &q> t-1, q\in \bbQ,
        \end{cases}
    \]
    which contradicts that $y(t)\in Y$.
\end{example}

Example~\ref{ex:c-infty-but-not-l-infty} shows that
$\mathrm C^\infty$-BIBO stability does not ensure the
existence of locally essentially bounded output representatives.
This raises two questions for a $\mathrm C^\infty$-BIBO
stable system node, with outputs corresponding to zero initial state:
\begin{enumerate}[\upshape (a)]
    \item
    If every $u\in \lpSpace{\infty}_{\loc}(\bbR_+; U)$ has an output
    representative in $\lpSpace{\infty}_{\loc}(\bbR_+; Y)$,
    must the system be $\lpSpace{\infty}$-BIBO stable?

    \item
    Whenever an individual $u\in \lpSpace{\infty}_{\loc}(\bbR_+; U)$
    has such an output representative, must it satisfy
    the BIBO estimate with the same constant as for smooth inputs?
\end{enumerate}
The following proposition answers both questions affirmatively.
In fact, local Bochner integrability of the output representative
already suffices to obtain essential boundedness and the same
BIBO estimate.

\begin{proposition}
    \label{prop:cinfty-bibo-existing-linfty-output-estimate}
    Let $\systemNodeSt$ be a system node which is
    $\mathrm C^\infty$-BIBO stable with constant $c\ge 0$.
    Let $t>0$ and let $u\in \lpSpace{1}_{\loc}(\bbR_+; U)$ satisfy
    $
        u\restrict{(0,t)}\in \lpSpaceDT{\infty}{[0,t]}{U}
    $.
    Denote by $y_u$ the zero-initial-state generalised output in the
    distributional sense. If $y_u$ is represented on $(0,t)$ by an element of
    $\lpSpaceDT{1}{[0,t]}{Y}$, then in fact $y_u\in \lpSpaceDT{\infty}{[0,t]}{Y}$ and
    \[
        \norm{y_u}_{\lpSpaceDT{\infty}{[0,t]}{Y}}
        \le
        c\norm{u}_{\lpSpaceDT{\infty}{[0,t]}{U}} .
    \]
\end{proposition}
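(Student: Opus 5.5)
The plan is to approximate $u$ on $(0,t)$ by smooth compactly supported inputs, use the $\mathrm C^\infty$-BIBO estimate for these, and pass to the limit using the $\lpSpace{1}$ representative together with a duality/lower semicontinuity argument.

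\emph{Step 1: reduction and mollification.} By causality we may replace $u$ by $u\one_{(0,t)}$; this changes neither $y_u$ on $(0,t)$ nor $\norm{u}_{\lpSpaceDT{\infty}{[0,t]}{U}}$. Fix $0<a<b<t$. We aim to show $\norm{y_u}_{\lpSpaceDT{\infty}{[a,b]}{Y}}\le c\norm{u}_{\lpSpaceDT{\infty}{[0,t]}{U}}$; letting $a\downarrow0$ and $b\uparrow t$ then gives the claim. Choose a mollifier $\rho_\varepsilon$ supported in $[0,\varepsilon]$, so that convolution delays rather than advances the signal. Let $\chi_\delta$ cut $u$ off near $0$, and set $u_{\varepsilon}\coloneqq\rho_\varepsilon*(\chi_\delta u)$. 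These inputs lie in $\mathrm C_c^\infty((0,\infty);U)$ and satisfy $\norm{u_\varepsilon}_\infty\le\norm{u}_{\lpSpaceDT{\infty}{[0,t]}{U}}$. The cutoff near $0$ costs nothing in sup-norm, and we take $\chi_\delta=\one$ on $[\delta,\infty)$ with the smooth transition absorbed. Alternatively, we avoid the cutoff by extending $u$ by zero to negative times and using the shift-invariance of the node. The $\mathrm C^\infty$-BIBO estimate then yields $\norm{y_{u_\varepsilon}}_{\lpSpaceDT{\infty}{\bbR_+}{Y}}\le c\norm{u}_{\lpSpaceDT{\infty}{[0,t]}{U}}$.

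\emph{Step 2: identify the limit.} The zero-state input-output map is linear and shift-invariant, so the distributional output commutes with convolution by a scalar kernel: $y_{\rho_\varepsilon*v}=\rho_\varepsilon*y_v$. This follows from the formula for $F$, because $C\&D$ is a closed operator that commutes with the Bochner integral. Given the $\lpSpace{1}$ representative $g$ of $y_u$ on $(0,t)$, we obtain $y_{u_\varepsilon}=\rho_\varepsilon*g$ on $(a,b)$ for small $\varepsilon$. Here the near-zero cutoff contributes an output supported away from $(a,b)$ only up to causality, which we handle by choosing $\delta$ small or by using the zero-extension. Hence $\rho_\varepsilon*g\to g$ in $\lpSpaceDT{1}{[a,b]}{Y}$, and along a subsequence also almost everywhere in norm.

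\emph{Step 3: conclude.} Pointwise a.e.\ convergence gives $\norm{g(s)}_Y=\lim\norm{(\rho_\varepsilon*g)(s)}_Y\le c\norm{u}_{\lpSpaceDT{\infty}{[0,t]}{U}}$ for almost every $s\in(a,b)$. This is exactly the desired bound, and $g$ is strongly measurable as an $\lpSpace{1}$ element. The main obstacle is Step 2, namely justifying rigorously that the generalised output of a mollified input is the mollified output near the left endpoint, where the input is not smooth at $0$. We plan to handle it via the $F''$ formula: $F_{\rho*v}=\rho*F_v$ follows from Fubini in $\dom{C\&D}$, and differentiating twice in the distributional sense gives the identity. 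Because $\rho_\varepsilon$ has support in $[0,\varepsilon]$, the convolution only uses values of $g$ on $(a-\varepsilon,b)\subseteq(0,t)$.
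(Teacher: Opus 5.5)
Your route differs from the paper's and can be completed, but your handling of the left endpoint rests on a false claim. The input $(1-\chi_\delta)u$ is supported in $[0,\delta]$, yet its output is in general \emph{not} supported away from $(a,b)$. Causality only says that an output vanishes before the input starts, not after it stops, because the state carries the effect forward. For instance, in the transport system of Example~\ref{ex:c-infty-but-not-l-infty}, an input supported in $[0,\delta]$ produces a nonzero output up to time $\delta+\tfrac12$. So with the cutoff you get $y_{u_\varepsilon}=\rho_\varepsilon*y_{\chi_\delta u}$ on $(a,b)$, not $\rho_\varepsilon*g$. ``Choosing $\delta$ small'' then needs a continuity statement that you do not supply.

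Your zero-extension alternative is also not right as literally stated. With $\operatorname{supp}\rho_\varepsilon\subseteq[0,\varepsilon]$, the function $\rho_\varepsilon*u$ need not vanish near $0$. It is therefore not in $\mathrm C_c^\infty((0,\infty);U)$, and the $\mathrm C^\infty$-BIBO estimate does not apply to it.

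The clean fix is to drop the cutoff and take $\rho_\varepsilon\ge0$ with $\int\rho_\varepsilon=1$ and support in $[\varepsilon/2,\varepsilon]$ (equivalently, delay the mollified input). Apply it to the zero extension $\tilde u$ of $u\one_{(0,t)}$. Then $u_\varepsilon\coloneqq\rho_\varepsilon*\tilde u\in\mathrm C_c^\infty((0,\infty);U)$ with $\norm{u_\varepsilon}_\infty\le\norm{u}_{\lpSpaceDT{\infty}{[0,t]}{U}}$. Your Fubini argument in $\dom{C\&D}$ gives $F_{u_\varepsilon}=\rho_\varepsilon*F_{\tilde u}$; it uses continuity of the integrated state/input pair into $\dom{C\&D}$ and $F_{\tilde u}(0)=0$, so the zero extension of $F_{\tilde u}$ is continuous. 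Hence $y_{u_\varepsilon}=\rho_\varepsilon*g$ on $(a,b)$ for $\varepsilon<a$. Since $(\rho_\varepsilon)$ is still an approximate identity, your Step~3 goes through.

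Alternatively, you can keep the cutoff and let $\delta\to0$ with $\varepsilon$ fixed. The bound $\norm{F_v}_{\mathrm C([0,t];Y)}\lesssim\norm{v}_{\lpSpaceDT{1}{[0,t]}{U}}$ from the proof of Lemma~\ref{lem:L1-to-distributional-output-continuity} gives $(\rho_\varepsilon*y_{\chi_\delta u})(s)\to(\rho_\varepsilon*g)(s)$ in norm for each $s\in(a,b)$, which yields the same pointwise bound.

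\textbf{Comparison with the paper.} The paper takes arbitrary smooth $u_n\to u$ in $\lpSpaceDT{1}{[0,t]}{U}$ with a uniform sup bound. From Lemma~\ref{lem:L1-to-distributional-output-continuity}, proved via the closed graph theorem, it obtains only distributional convergence $y_n\to y_u$. It deduces the scalar bounds $\norm{\duality{y'}{y_u(\argument)}}_{\infty}\le c\norm{u}_{\infty}\norm{y'}$. It then upgrades these to a norm bound using essential separability of the $\lpSpace{1}$ representative and a countable norming set.

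Your argument replaces the continuity lemma and the duality/separability step with an exact identity: the output of the mollified input is the mollified output. Combined with strong a.e.\ convergence of a Bochner approximate identity, this is more concrete. It even gives $y_{u_\varepsilon}\to g$ in $\lpSpaceDT{1}{[a,b]}{Y}$. The cost is that it depends on the shift-invariant convolution structure and on the careful treatment near $t=0$ described above. The paper's lemma works for arbitrary $\lpSpace{1}$-approximations and is reused later in Theorem~\ref{thm:reflexive-output-cinfty-implies-linfty-bibo}, Proposition~\ref{prop:lp-lq-io-exp-stab}, and Lemma~\ref{lem:weak-laplace-output-representation}.
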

The proof of this result is based upon the following continuity result.
\begin{lemma}
    \label{lem:L1-to-distributional-output-continuity}
    Let $\systemNodeSt$ be a system node and let
    $t>0$. 
    Extend every $u\in \lpSpaceDT{1}{[0,t]}{U}$ by zero to $\bbR_+$ and let
    $(u,x_u,y_u)$ be the generalised solution in the distributional sense with
    $x_u(0)=0$ for each such $u$. 
    Then there exists a constant $M_t>0$ such that
    \[
        \modulus{y_u(\varphi)}
        \le
        M_t
        \norm{u}_{\lpSpaceDT{1}{[0,t]}{U}}
        \norm{\varphi''}_{\lpSpaceDT{1}{[0,t]}{Y'}}
    \]
    for all $u\in \lpSpaceDT{1}{[0,t]}{U}$ and all
    $\varphi\in\mathrm C_c^\infty((0,t);Y')$.
\end{lemma}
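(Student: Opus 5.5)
The plan is to estimate $F$ directly in the sup norm on $[0,t]$ and then integrate against $\varphi''$. By definition,
\[
    y_u(\varphi)=\int_0^t\duality{\varphi''(s)}{F_u(s)}\dx s ,
    \qquad
    F_u(s)=(C\&D)\int_0^s(s-r)\begin{bmatrix}x_u(r)\\u(r)\end{bmatrix}\dx r .
\]
Here $\varphi$ is supported in $(0,t)$, so only $s\in[0,t]$ matters. It therefore suffices to show that
\[
    \sup_{s\in[0,t]}\norm{F_u(s)}_Y\le M_t\norm{u}_{\lpSpaceDT{1}{[0,t]}{U}} .
\]
Since $C\&D$ is bounded on $\dom{C\&D}$ with the graph-type norm, this reduces to bounding the three pieces $\norm{z(s)}_X$, $\norm{v(s)}_U$ and $\norm{A_{-1}z(s)+Bv(s)}_X$, where
\[
    z(s)=\int_0^s(s-r)x_u(r)\dx r,
    \qquad
    v(s)=\int_0^s(s-r)u(r)\dx r .
\]
Each should be bounded uniformly in $s\in[0,t]$ by a constant times $\norm{u}_{\lpSpace{1}}$.

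The term $v$ is immediate, since $\norm{v(s)}_U\le t\norm{u}_{\lpSpaceDT{1}{[0,t]}{U}}$. For the other two terms I will use the semigroup identities rather than the abstract continuity statement of \cite[Lemma~4.7.9]{Staffans2005}. The point is that the map $u\mapsto\int_0^\cdot(\cdot-r)\left[\begin{smallmatrix}x_u(r)\\u(r)\end{smallmatrix}\right]\dx r$ is linear, and I want its continuity from $\lpSpaceDT{1}{[0,t]}{U}$ into $\mathrm C([0,t];\dom{C\&D})$ with an explicit bound.

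First set $w(s)=\int_0^s x_u(r)\dx r$. Integrating the mild solution formula in $X_{-1}$ gives
\[
    A_{-1}w(s)+\int_0^s Bu(r)\dx r=x_u(s).
\]
Integrating once more gives
\[
    A_{-1}z(s)+Bv(s)=w(s)=\int_0^s\int_0^r\bbT_{-1}(r-\sigma)Bu(\sigma)\dx\sigma\dx r .
\]
So it remains to bound $w$ and $z$ in $X$, not merely in $X_{-1}$. The key identity is
\[
    \int_0^r\bbT_{-1}(r-\sigma)Bu(\sigma)\dx\sigma
    =(\beta-A_{-1})\int_0^r\bbT(r-\sigma)(\beta-A_{-1})^{-1}Bu(\sigma)\dx\sigma .
\]
Here $(\beta-A_{-1})^{-1}B\in\calL(U,X)$. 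Integrating this in $r$ and using $A\int_0^r\bbT(\tau)x\dx\tau=\bbT(r)x-x$ yields
\[
    w(s)=\int_0^s\Bigl[\beta\int_0^{s-\sigma}\bbT(\tau)\dx\tau+\id-\bbT(s-\sigma)\Bigr](\beta-A_{-1})^{-1}Bu(\sigma)\dx\sigma .
\]
This is now an $X$-valued Bochner integral. Its norm is at most $K_t\norm{(\beta-A_{-1})^{-1}B}\norm{u}_{\lpSpace{1}}$, with $K_t$ depending only on $\sup_{[0,t]}\norm{\bbT}$, $\beta$ and $t$. Since $z(s)=\int_0^s w(r)\dx r$, the same bound multiplied by $t$ gives the estimate for $z$.

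Combining the three bounds gives
\[
    \norm{F_u(s)}_Y\le\norm{C\&D}\,M'_t\norm{u}_{\lpSpace{1}}
    \qquad\text{for } s\in[0,t],
\]
and hence
\[
    \modulus{y_u(\varphi)}\le\sup_{[0,t]}\norm{F_u}\,\norm{\varphi''}_{\lpSpaceDT{1}{[0,t]}{Y'}} ,
\]
which is the claim. Causality means only $u$ on $[0,t]$ is involved, and $u$ is extended by zero anyway.

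The main obstacle is the bookkeeping needed to justify that $w(s)$ lies in $X$ with the stated $X$-norm bound. This requires commuting $(\beta-A_{-1})$ with the integrals in $X_{-1}$, which I would check first for $u$ in a dense class such as $\mathrm C^1$ and then extend by continuity of both sides in $X_{-1}$.
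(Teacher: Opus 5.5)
Your argument is correct, but it takes a different route from the paper.

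The paper does no computation. It quotes \cite[Lemma~4.7.9]{Staffans2005} for the qualitative fact that $u\mapsto\int_0^{\argument}(\argument-s)\left[\begin{smallmatrix}x_u(s)\\u(s)\end{smallmatrix}\right]\dx s$ takes values in $\mathrm C([0,t];\dom{C\&D})$. It notes that this map is trivially bounded into the larger space $\mathrm C([0,t];X_{-1}\times U)$. The closed graph theorem then upgrades this to boundedness into $\mathrm C([0,t];\dom{C\&D})$. Boundedness of $C\&D$ and integration against $\varphi''$ finish exactly as in your last step.

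You instead make the graph norm explicit through $A_{-1}z(s)+Bv(s)=w(s)$. You bound $w(s)$ in $X$ via the conjugation $\bbT_{-1}(r)=(\beta-A_{-1})\bbT(r)\resolvent{\beta}{A_{-1}}$, which in effect reproves the relevant part of Staffans' lemma in quantitative form. Your route gives a self-contained proof with an explicit constant, for instance $M_t=\norm{C\&D}\bigl(t+(1+t)\bigl(1+(1+\modulus{\beta}t)\sup_{[0,t]}\norm{\bbT}\bigr)\norm{\resolvent{\beta}{A_{-1}}B}_{\calL(U,X)}\bigr)$. The paper's argument is shorter but says nothing about the size of $M_t$.

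The density step you flag as the main obstacle is not needed. The operator $\beta-A_{-1}\in\calL(X,X_{-1})$ commutes with $X$-valued Bochner integrals, and your conjugation identity holds pointwise in the integrand. So your formula for $w(s)$ is an $X$-valued Bochner integral, and its value agrees with the original $X_{-1}$-valued integral by the continuous embedding $X\hookrightarrow X_{-1}$.
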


\begin{proof}
    First of all,~\cite[Lemma~4.7.9]{Staffans2005}, applied with initial time $0$ and
    initial state $0$, gives
    \[
        \begin{bmatrix}
            K_u\\ L_u
        \end{bmatrix}
        \coloneqq 
        \int_0^{\argument} (\argument-s)
        \begin{bmatrix}
            x_u(s)\\ u(s)
        \end{bmatrix}
        \dx s
        \in
        \mathrm C([0,t];\dom{C\&D}).
    \]
    By the mild-solution formula, $u\mapsto x_u$ is
    bounded from $\lpSpaceDT{1}{[0,t]}{U}$ into
    $\mathrm C([0,t];X_{-1})$.
    Hence the map
    \[
        \lpSpaceDT{1}{[0,t]}{U}\ni u
        \mapsto
        \begin{bmatrix}K_u\\L_u\end{bmatrix}
        \in \mathrm C([0,t];X_{-1}\times U)
    \]
    is bounded.
    Its range lies in the Banach space
    $\mathrm C([0,t];\dom{C\&D})$, which embeds continuously
    into $\mathrm C([0,t];X_{-1}\times U)$.
    The closed graph theorem therefore yields $c_t'>0$ such that
    \[
        \sup_{0\le \tau\le t}
        \norm{
            \begin{bmatrix}K_u(\tau)\\L_u(\tau)\end{bmatrix}
        }_{\dom{C\&D}}
        \le
        c_t'\norm{u}_{\lpSpaceDT{1}{[0,t]}{U}}.
    \]

    Since
    $
        C\&D:\dom{C\&D}\to Y
    $
    is bounded,
    $
        F_u
        \coloneqq 
        (C\&D)
        \begin{bmatrix}
            K_u\\L_u
        \end{bmatrix}
        \in 
        \mathrm C([0,t];Y)
    $
    and there exists $c_t''>0$ such that
    $
        \norm{F_u}_{\mathrm C([0,t];Y)}
        \le
        c_t''
        \norm{u}_{\lpSpaceDT{1}{[0,t]}{U}}.
    $
    Since the output distribution is given by
    $
        y_u
        =
        \frac{\dxShort^2}{\dxShort \tau^2}F_u,
    $
    we obtain for $\varphi\in\mathrm C_c^\infty((0,t);Y')$,
    \begin{align*}
        \modulus{y_u(\varphi)}
        &=
        \modulus{
            \int_0^t
            \duality{\varphi''(\tau)}{F_u(\tau)}
            \dx \tau
        }\\                                                   
        &\le
        \norm{F_u}_{\mathrm C([0,t];Y)}
        \norm{\varphi''}_{\lpSpaceDT{1}{[0,t]}{Y'}}                            
        \le
        c_t''
        \norm{u}_{\lpSpaceDT{1}{[0,t]}{U}}
        \norm{\varphi''}_{\lpSpaceDT{1}{[0,t]}{Y'}}.
    \end{align*}
    This proves the asserted estimate with $M_t=c_t''$.
\end{proof}

\begin{proof}[Proof of Proposition~\ref{prop:cinfty-bibo-existing-linfty-output-estimate}]
    Put
    $
        M\coloneqq\norm{u}_{\lpSpaceDT{\infty}{[0,t]}{U}}
    $.
    By replacing $u$ by $\one_{[0,t]}u$, we do not change the generalised output
    on $(0,t)$. Thus we may assume for the following
    approximation argument that $u$ is supported in $[0,t]$.
    Choose a sequence
    $  u_n\in \mathrm C_c^\infty((0,\infty);U)$
    such that
    \[
        u_n\to u
        \quad\text{in }\lpSpaceDT{1}{[0,t]}{U},
        \qquad
        \norm{u_n}_{\lpSpaceDT{\infty}{[0,t]}{U}}\le M.
    \]
    
    Let $(u_n,x_n,y_n)$ denote the corresponding zero-initial-state classical
    solution. Then $(u_n,x_n,y_n)$ is also a generalised
    solution in the distributional sense. By Lemma~\ref{lem:L1-to-distributional-output-continuity}, the $\lpSpace{1}$ convergence implies that
    \[
        y_n(\varphi)\to y_u(\varphi)
        \quad
        \text{for every }
        \varphi\in \mathrm C_c^\infty((0,t);Y').
    \]

    Let $\psi\in \mathrm C_c^\infty((0,t))$ and $y'\in Y'$. Applying the distributional
    convergence to the test function $\psi y'$ gives
    \[
        \int_0^t \psi(\tau)\duality{y'}{y_u(\tau)}\dx \tau
        =
        \lim_{n\to\infty}
        \int_0^t \psi(\tau)\duality{y'}{y_n(\tau)}\dx \tau .
    \]
    Moreover, $\mathrm C^\infty$-BIBO stability ensures
    $
        \norm{y_n}_{\lpSpaceDT{\infty}{[0,t]}{Y}}
        \le
        c\norm{u_n}_{\lpSpaceDT{\infty}{[0,t]}{U}}
        \le
        cM
    $
    for all $n\in\bbN$.
    Hence,
    \begin{align*}
        \modulus{
            \int_0^t \psi(\tau)\duality{y'}{y_u(\tau)}\dx \tau
        }
        &\le
        \limsup_{n\to\infty}
        \int_0^t
        \modulus{\psi(\tau)}
        \norm{y'}_{Y'}
        \norm{y_n(\tau)}_Y
        \dx \tau                                    
        \le
        cM\norm{y'}_{Y'}
        \norm{\psi}_{\lpSpaceD{1}{[0,t]}} .
    \end{align*}
    Since $\duality{y'}{y_u(\argument)}\in \lpSpaceD{1}{[0,t]}$, this implies
    \begin{equation}
        \label{eq:distributional-estimate}
        \norm{\duality{y'}{y_u(\argument)}}_{\lpSpaceD{\infty}{[0,t]}}
        \le
        cM\norm{y'}_{Y'}
        \quad
        \text{for every }
        y'\in Y'.
    \end{equation}
    Since $y_u$ is represented by an element of $\lpSpaceDT{1}{[0,t]}{Y}$, it is strongly measurable and hence essentially separably
    valued. Thus there exists a separable closed subspace
    $Y_0\subseteq Y$ such that $y_u(\tau)\in Y_0$ for almost every $\tau\in(0,t)$.
    By separability, there exists a countable norming set
    $
        \{y_m'\}_{m\in\bbN}\subseteq B_{Y'}
    $
    for $Y_0$.
    By~\eqref{eq:distributional-estimate},
    \[
        \modulus{\duality{y_m'}{y_u(\tau)}}\le cM
        \quad
        \text{for almost every }
        \tau\in(0,t), m\in\bbN.
    \]
    Taking the intersection
    of the corresponding full-measure sets and using that
    $\{y_m'\}_{m\in\bbN}$ is norming for $Y_0$, we obtain
    $\norm{y_u(\tau)}_Y\le cM$ for almost every $\tau\in(0,t)$.
    Hence,
    $
        \norm{y_u}_{\lpSpaceDT{\infty}{[0,t]}{Y}}
        \le
        cM
        =
        c\norm{u}_{\lpSpaceDT{\infty}{[0,t]}{U}}
    $.
\end{proof}

Applying Proposition~\ref{prop:cinfty-bibo-existing-linfty-output-estimate} on every interval $(0,t)$ yields the following:

\begin{corollary}
    \label{cor:cinfty-bibo-plus-linfty-outputs-implies-linfty-bibo}
    Let $\systemNodeSt$ be a system node. Assume that
    $\systemNodeSt$ is $\mathrm C^\infty$-BIBO stable with constant $c\ge 0$.
    Assume moreover that, for every $u\in \lpSpace{\infty}_{\loc}(\bbR_+; U)$,
    the zero-initial-state generalised output $y_u$ is represented by an element of
    $\lpSpace{1}_{\loc}(\bbR_+; Y)$.
    Then $\systemNodeSt$ is $\lpSpace{\infty}$-BIBO stable with the same constant $c$.
\end{corollary}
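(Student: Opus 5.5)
The plan is to apply Proposition~\ref{prop:cinfty-bibo-existing-linfty-output-estimate} on each bounded interval and then glue the resulting representatives together, as in Remarks~\ref{rem:BIBO-constants-estimates}(b).

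Fix $u\in \lpSpace{\infty}_{\loc}(\bbR_+; U)$. By assumption, the zero-state output $y_u$ is represented by some $g\in\lpSpace{1}_{\loc}(\bbR_+;Y)$. For each $t>0$ the hypotheses of Proposition~\ref{prop:cinfty-bibo-existing-linfty-output-estimate} hold:
\begin{enumerate}[\upshape (1)]
    \item $u\in\lpSpace{1}_{\loc}(\bbR_+;U)$, since locally essentially bounded functions are locally integrable;
    \item $u\restrict{(0,t)}\in\lpSpaceDT{\infty}{[0,t]}{U}$;
    \item $g\restrict{(0,t)}\in\lpSpaceDT{1}{[0,t]}{Y}$ represents $y_u$ on $(0,t)$, because test functions supported in $(0,t)$ are in particular test functions on $(0,\infty)$.
\end{enumerate}
Hence the proposition gives $g\restrict{(0,t)}\in\lpSpaceDT{\infty}{[0,t]}{Y}$ together with
\[
    \norm{g}_{\lpSpaceDT{\infty}{[0,t]}{Y}}\le c\norm{u}_{\lpSpaceDT{\infty}{[0,t]}{U}}.
\]

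Since this holds for every $t>0$, the single function $g$ lies in $\lpSpace{\infty}_{\loc}(\bbR_+;Y)$, represents $y_u$, and satisfies~\eqref{eq:BIBOInequality} with constant $c$. Because $u$ was arbitrary, this is exactly $\lpSpace{\infty}$-BIBO stability with the same constant in the sense of Definition~\ref{def:BIBOStability}(b).

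No real obstacle is expected. The only point needing care is that one global representative $g$ is used, so no gluing of different representatives is required. Uniqueness of representatives up to null sets is standard in any case, since $\mathrm C_c^\infty$ scalar test functions separate $\lpSpace{1}_{\loc}$ functions after pairing with $Y'$, and strong measurability allows a countable norming set, as in the proof of the proposition.
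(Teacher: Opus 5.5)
Your proposal is correct and matches the paper, which obtains the corollary simply by applying Proposition~\ref{prop:cinfty-bibo-existing-linfty-output-estimate} on every interval $(0,t)$. Working with one global representative $g\in\lpSpace{1}_{\loc}(\bbR_+;Y)$ from the start, as you do, means no gluing of interval-wise representatives is needed.
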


Reflexivity of the output space ensures that the required output
representatives exist and hence yields an equivalence between
the two notions of BIBO stability.

\begin{theorem}
    \label{thm:reflexive-output-cinfty-implies-linfty-bibo}
    Let $\systemNodeSt$ be a $\mathrm C^\infty$-BIBO stable system
    node with constant $c\ge0$. If $Y$ is reflexive, then
    $\systemNodeSt$ is $\lpSpace{\infty}$-BIBO stable with the same
    constant $c$.
\end{theorem}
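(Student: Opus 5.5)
By Corollary~\ref{cor:cinfty-bibo-plus-linfty-outputs-implies-linfty-bibo}, it suffices to show the following: for every $u\in\lpSpace{\infty}_{\loc}(\bbR_+;U)$, the zero-state generalised output $y_u$ is represented by an element of $\lpSpace{1}_{\loc}(\bbR_+;Y)$. The constant $c$ is then inherited automatically. Fix $t>0$ and replace $u$ by $\one_{[0,t]}u$, which changes nothing on $(0,t)$. As in the proof of Proposition~\ref{prop:cinfty-bibo-existing-linfty-output-estimate}, choose $u_n\in\mathrm C_c^\infty((0,\infty);U)$ with $u_n\to u$ in $\lpSpaceDT{1}{[0,t]}{U}$ and $\norm{u_n}_{\lpSpaceDT{\infty}{[0,t]}{U}}\le M\coloneqq\norm{u}_{\lpSpaceDT{\infty}{[0,t]}{U}}$. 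By $\mathrm C^\infty$-BIBO stability, the classical outputs $y_n$ are continuous and satisfy $\norm{y_n}_{\lpSpaceDT{\infty}{[0,t]}{Y}}\le cM$. By Lemma~\ref{lem:L1-to-distributional-output-continuity}, $y_n(\varphi)\to y_u(\varphi)$ for every $\varphi\in\mathrm C_c^\infty((0,t);Y')$.

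The key step is to extract an actual function limit, and this is where reflexivity enters. The sequence $(y_n)$ is bounded in $\lpSpaceDT{2}{[0,t]}{Y}$. Since $Y$ is reflexive, $\lpSpaceDT{2}{[0,t]}{Y}$ is reflexive: its dual is $\lpSpaceDT{2}{[0,t]}{Y'}$, because $Y'$ has the Radon--Nikodým property when $Y$ is reflexive. Hence a subsequence converges weakly to some $g\in\lpSpaceDT{2}{[0,t]}{Y}\subseteq\lpSpaceDT{1}{[0,t]}{Y}$. This is the main obstacle, and the argument relies on the standard facts that reflexive spaces have the RNP and that Bochner $\lpSpace{p}$ over a reflexive space is reflexive for $1<p<\infty$. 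Alternatively, one could use weak$^*$ compactness in $\lpSpace{\infty}(Y)=(\lpSpace{1}(Y'))'$, which requires the RNP of $Y$ itself and is again guaranteed by reflexivity. The $\lpSpace{2}$ route avoids identifying $\lpSpace{\infty}$ duals.

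Next, identify $g$ with $y_u$ on $(0,t)$. For $\psi\in\mathrm C_c^\infty((0,t))$ and $y'\in Y'$, the function $\psi y'$ lies in $\lpSpaceDT{2}{[0,t]}{Y'}$. Weak convergence therefore gives
$\int_0^t\psi(\tau)\duality{y'}{y_n(\tau)}\dx\tau\to\int_0^t\psi(\tau)\duality{y'}{g(\tau)}\dx\tau$.
Since $y_n$ is a classical output, $y_n(\psi y')=\int_0^t\psi\duality{y'}{y_n}\dx\tau$. The left-hand side also converges to $y_u(\psi y')=\duality{y'}{y_u(\psi)}$ by the distributional convergence. It follows that $\duality{y'}{y_u(\psi)}=\duality{y'}{\int_0^t\psi g\dx\tau}$ for all $y'\in Y'$, so by Hahn--Banach $y_u(\psi)=\int_0^t\psi g\dx\tau$. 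Thus $g$ represents $y_u$ on $(0,t)$.

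Finally, the representatives obtained for different $t$ agree almost everywhere on overlaps, by uniqueness of distributional representatives tested against scalar test functions times functionals. They therefore patch together to an element of $\lpSpace{1}_{\loc}(\bbR_+;Y)$, and Corollary~\ref{cor:cinfty-bibo-plus-linfty-outputs-implies-linfty-bibo} yields $\lpSpace{\infty}$-BIBO stability with constant $c$. One could also bypass the Corollary, since weak lower semicontinuity of the $\lpSpace{\infty}$ norm via norming functionals would give the estimate directly. Invoking Proposition~\ref{prop:cinfty-bibo-existing-linfty-output-estimate} is cleaner.
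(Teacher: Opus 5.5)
Your proposal is correct and follows essentially the same route as the paper: bound the smooth-input outputs in $\lpSpaceDT{2}{[0,t]}{Y}$, use reflexivity of that space to extract a weakly convergent subsequence, identify the weak limit with $y_u$ on $(0,t)$ via Lemma~\ref{lem:L1-to-distributional-output-continuity}, patch the representatives over $t$, and conclude with Corollary~\ref{cor:cinfty-bibo-plus-linfty-outputs-implies-linfty-bibo}. The only cosmetic difference is that the paper takes $u_n\in\mathrm C_c^\infty((0,t);U)$ directly, whereas your $u_n\in\mathrm C_c^\infty((0,\infty);U)$ require an implicit causality step before the lemma can be applied on $(0,t)$.
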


\begin{proof}
    Fix $u\in \lpSpace{\infty}_{\loc}(\bbR_+; U)$ and let $y_u$
    denote the corresponding zero-state generalised output.
    We show that $y_u$ is represented by an element of
    $\lpSpace{1}_{\loc}(\bbR_+; Y)$.
    Fix $t>0$ and put
    $M\coloneqq\norm{u}_{\lpSpaceDT{\infty}{[0,t]}{U}}$.
    Choose $u_n\in \mathrm C_c^\infty((0,t);U)$ such that
    \[
        u_n\to u
        \quad\text{in }\lpSpaceDT{1}{[0,t]}{U},
        \qquad
        \norm{u_n}_{\lpSpaceDT{\infty}{[0,t]}{U}}\le M.
    \]
    Let $y_n$
    be its zero-state classical output, hence it
    also represents the corresponding generalised output.
    The $\mathrm C^\infty$-BIBO estimate gives
    \[
        \norm{y_n}_{\lpSpaceDT{2}{[0,t]}{Y}}
        \le t^{1/2}\norm{y_n}_{\lpSpaceDT{\infty}{[0,t]}{Y}}
        \le t^{1/2}cM.
    \]
    Together with the reflexivity of $Y$, we obtain that $(y_n)$ is bounded in the reflexive space $\lpSpaceDT{2}{[0,t]}{Y}$.
    Passing to a subsequence, we may therefore assume that it converges weakly to some $z_t\in \lpSpaceDT{2}{[0,t]}{Y}$.

    Applying Lemma~\ref{lem:L1-to-distributional-output-continuity}
    to the zero extension of $u\restrict{(0,t)}$ and using
    causality gives
    $y_n\to y_u$ distributionally on $(0,t)$.
    On the other hand, for every
    $\varphi\in \mathrm C_c^\infty((0,t))$ and $y'\in Y'$,
    weak convergence gives
    \[
        \duality{y'}{y_u(\varphi)}
        =
        \lim_{n\to\infty}
        \int_0^t\varphi(s)\duality{y'}{y_n(s)}\dx s
        =
        \int_0^t\varphi(s)\duality{y'}{z_t(s)}\dx s.
    \]
    Since $Y'$ separates points of $Y$, $z_t$ represents
    $y_u$ on $(0,t)$.
    Moreover,
    $z_t\in \lpSpaceDT{2}{[0,t]}{Y}\subseteq \lpSpaceDT{1}{[0,t]}{Y}$.

    As $t>0$ was arbitrary, uniqueness of distributional
    representatives on overlapping intervals shows that
    $y_u$ is represented by an element of
    $\lpSpace{1}_{\loc}(\bbR_+; Y)$.
    Corollary~\ref{cor:cinfty-bibo-plus-linfty-outputs-implies-linfty-bibo}
    now yields the assertion.
\end{proof}

\section{Connection between BIBO stability and exponential stability}
\label{sec:BIBOandStability}

For finite-dimensional systems, exponential stability implies BIBO
stability; see, for instance,
\cite[Section~2.6.1, page~177]{b_Kailath80_LinearSystems}.
For infinite-dimensional system nodes, this implication fails in
general, even when the input and output spaces are finite-dimensional
\cite[Theorem~5.1]{SchwenningerWierzbaZwart2024}.
By Theorem~\ref{thm:finiteDimBIBOCharacterisation}, the counterexample
in that reference is therefore not even $\mathrm C^\infty$-BIBO stable.

Conversely, BIBO stability implies exponential stability for
controllable and observable finite-dimensional systems.
In infinite dimensions, the validity of this implication depends
on the precise notions of controllability and observability,
including the function spaces in which these properties are required.
Related implications for well-posed systems under exponential
stabilisability and detectability assumptions are given in
\cite[Theorem~8.2.11]{Staffans2005}.
Here we use finite-time null-controllability and final observability,
together with the corresponding admissibility assumptions.

For background on admissibility, controllability, and observability,
we refer to the monographs
\cite{Staffans2005,b_TucsnakWeiss2009}.
Let $p\in[1,\infty]$ and, for $t>0$, set
\begin{align*}
    \Phi_tu
    &\coloneqq\int_0^t\bbT_{-1}(t-s)Bu(s)\dx s,
    & u&\in\lpSpaceDT{p}{[0,t]}{U},\\
    (\Psi_tx)(s)
    &\coloneqq C\bbT(s)x,
    & x&\in X_1,\quad 0\le s\le t.
\end{align*}
Initially, $\Phi_t$ takes values in $X_{-1}$ and $\Psi_t$ is defined on $X_1$.
Whenever $\Psi_t$ has a bounded extension to $X$, we use the same
notation for this extension.
The relevant notions are summarised in
Table~\ref{tab:admissibility-controllability-observability}.
The admissibility conditions are required for some (equivalently, every) $t>0$.
The remaining conditions presuppose the corresponding admissibility.

\begin{center}
    \captionof{table}{Admissibility, controllability, and observability.}
    \label{tab:admissibility-controllability-observability}
    \renewcommand{\arraystretch}{1.2}
    \begin{tabularx}{\textwidth}{|
        >{\raggedright\arraybackslash}p{0.43\textwidth}
        @{\hspace{2em}}
        >{\raggedright\arraybackslash}X|}
        \hline
        Notion & Characterising condition \\
        \hline
        $\lpSpace{p}$-control-admissibility
        & $\Phi_t\in\calL(\lpSpaceDT{p}{[0,t]}{U},X)$. \\
        $\lpSpace{p}$-observation-admissibility
        & $\Psi_t$ extends to an element of
          $\calL(X,\lpSpaceDT{p}{[0,t]}{Y})$. \\
        \hline
        Infinite-time $\lpSpace{p}$-control-admissibility
        & $\sup_{t>0}\norm{\Phi_t}<\infty$. \\
        Infinite-time $\lpSpace{p}$-observation-admissibility
        & $\sup_{t>0}\norm{\Psi_t}<\infty$. \\
        \hline
        Exact $\lpSpace{p}$-controllability in time $t>0$
        & $\Ima\Phi_t=X$. \\
        Approximate $\lpSpace{p}$-controllability in time $t>0$
        & $\overline{\Ima\Phi_t}=X$. \\
        $\lpSpace{p}$-null-controllability in time $t>0$
        & $\Ima\bbT(t)\subseteq\Ima\Phi_t$. \\
        \hline
        Exact $\lpSpace{p}$-observability in time $t>0$
        & for some $c>0$:
          $\norm{x}
          \le c\norm{\Psi_t x}_p$
          for all $x\in X$. \\
        Approximate $\lpSpace{p}$-observability in time $t>0$
        & $\ker\Psi_t=\{0\}$. \\
        $\lpSpace{p}$-final observability in time $t>0$
        & for some $c>0$:
          $\norm{\bbT(t)x}
          \le c\norm{\Psi_t x}_p$
          for all $x\in X$. \\
        \hline
    \end{tabularx}
\end{center}
\medskip

The operator norms are taken between the spaces specified in the
admissibility rows.
A controllability or observability property holds in finite time
if it holds for some $t>0$.
Null-controllability in time $t$ means that every initial state can
be driven to zero at time $t$ by an $\lpSpace{p}$-control.

\begin{remark}
\label{rem:contrAndobsResults}
    Let $t>0$ and $p\in[1,\infty]$.
    If $B\in\calL(U,X)$ has dense range, then it is
    approximately controllable in time $t$ in $\lpSpace{p}$.
    Indeed, for $v\in U$ and integers $m>1/t$, set
    $
        u_m\coloneqq
        mv\one_{[t-1/m,t]}.
    $
    Strong continuity gives
    $
        \Phi_t u_m
        =
        m\int_0^{1/m}\bbT(r)Bv\dx r
        \to Bv
    $
    as $m\to \infty$,
    and hence
    $\overline{\Ima\Phi_t}\supseteq\overline{\Ima B}=X$.

    Similarly, if $C\in\calL(X,Y)$ is injective, then it is
    approximately observable in time $t$ in $\lpSpace{p}$.
    Indeed, if $\Psi_t x=0$ for some $x\in X$, then
    $C\bbT(s)x=0$ for almost every $s\in(0,t)$.
    By continuity, this holds for every $s\in[0,t]$.
    Taking $s=0$ gives $Cx=0$, and injectivity yields $x=0$.
\end{remark}

For the simplest case of identity input and output operators it was shown in~\cite[Lemma~3.3.14]{Wierzba2025} that van Neerven's convolution characterisation of exponential stability~\cite[Theorem~3.4.4]{vanNeerven1996b} gives an equivalence between exponential stability and BIBO stability. The following proposition shows that this result can be extended to a class of bounded input and output operators.

\begin{proposition}
    \label{prop:id-node-bibo-exp-stab}
    Consider a system node $\systemNodeSt$ with $B \in \calL(U,X)$ surjective and $C \in \calL(X,Y)$ bounded from below.
    Then the following assertions are equivalent:
    \begin{enumerate}[\upshape (i)]
        \item \label{enum:ExpStab}
        The semigroup $(\bbT(t))_{t\ge0}$ generated by $A$ is exponentially stable.
    
        \item \label{enum:LinftyBIBO}
        The system is $\lpSpace{\infty}$-BIBO stable.
    
        \item \label{enum:CinftyBIBO}
        The system is $\mathrm C^\infty$-BIBO stable.
    \end{enumerate}
\end{proposition}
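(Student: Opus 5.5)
We prove the cycle \ref{enum:ExpStab}$\Rightarrow$\ref{enum:LinftyBIBO}$\Rightarrow$\ref{enum:CinftyBIBO}$\Rightarrow$\ref{enum:ExpStab}. Since $B$ and $C$ are bounded, $C\&D$ reduces to $[x,u]\mapsto Cx+\mathbf G(\beta)u - C(\beta-A)^{-1}Bu$. Because $\mathbf G(\alpha)-C(\alpha-A)^{-1}B$ is constant by~\eqref{eq:transferFunctionDifferenceRelation}, this is $Cx+Du$ for some $D\in\calL(U,Y)$. Hence, for $u\in\lpSpace{\infty}_{\loc}(\bbR_+;U)$, the zero-state generalised output is represented by the function
\[
    y(t)=C\int_0^t\bbT(t-s)Bu(s)\dx s+Du(t),
\]
which is locally Bochner integrable.

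For \ref{enum:ExpStab}$\Rightarrow$\ref{enum:LinftyBIBO}, assume $\norm{\bbT(t)}\le Me^{-\omega t}$ with $\omega>0$. Then
\[
    \norm{y(t)}\le \bigl(\norm{C}\norm{B}M/\omega+\norm{D}\bigr)\norm{u}_{\lpSpaceDT{\infty}{[0,t]}{U}}.
\]
The implication \ref{enum:LinftyBIBO}$\Rightarrow$\ref{enum:CinftyBIBO} is immediate, since smooth compactly supported inputs are admissible and classical outputs coincide with generalised ones.

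The main step is \ref{enum:CinftyBIBO}$\Rightarrow$\ref{enum:ExpStab}. First, by Corollary~\ref{cor:cinfty-bibo-plus-linfty-outputs-implies-linfty-bibo} (the representative above lies in $\lpSpace{1}_{\loc}$), we upgrade to $\lpSpace{\infty}$-BIBO stability with constant $c$. Next we remove the feedthrough. Fix $f\in\lpSpaceDT{\infty}{\bbR_+}{X}$. By surjectivity and the open mapping theorem, there is $K>0$ such that every $x\in X$ has a preimage of norm at most $K\norm{x}$. The difficulty is producing a measurable $u$ with $Bu=f$. Since $\ker B$ is complemented only in special cases, we instead work on a dense class: take $f$ countably valued (or simple), which are dense in $L^\infty$ among strongly measurable functions, and pick preimages pointwise. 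This gives $u$ with $Bu=f$ and $\norm{u}_\infty\le K\norm{f}_\infty$. Then
\[
    \norm{C\,(\bbT*f)(t)}\le (c+\norm{D})K\norm{f}_\infty,
\]
and since $C$ is bounded below by some $\gamma>0$,
\[
    \Bigl\|\int_0^t\bbT(t-s)f(s)\dx s\Bigr\|\le \gamma^{-1}(c+\norm{D})K\norm{f}_\infty .
\]
By density and continuity of the convolution in $f$ (the integral up to time $t$ is $L^1$-continuous), this bound extends to all $f\in\lpSpaceDT{\infty}{\bbR_+}{X}$, in particular to all $f\in\mathrm C_c(\bbR_+;X)$.

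Finally, we invoke van Neerven's convolution characterisation~\cite[Theorem~3.4.4]{vanNeerven1996b}. If $\bbT*f$ is bounded for every bounded (continuous, compactly supported) $f$, then $\bbT$ is exponentially stable. This is exactly the route of~\cite[Lemma~3.3.14]{Wierzba2025} in the case $B=C=\id$. The obstacle I expect is the measurable-selection step when lifting $f$ through $B$; the countably-valued approximation plus the uniform constant $K$ is what I would use to handle it.
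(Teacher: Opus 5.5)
Your argument is correct, but it differs from the paper's in the one nontrivial step: lifting $f$ through the surjection $B$. The paper invokes the Bartle--Graves theorem to obtain a continuous (nonlinear) right inverse $S\colon X\to U$ with $\norm{Sx}_U\le K\norm{x}_X$. Then $u=S\circ f$ is strongly measurable for \emph{every} strongly measurable $f$, and no approximation is needed; the price is a genuinely deep selection theorem. You use only the open mapping theorem. For countably-valued $f=\sum_n x_n\one_{E_n}$, the pointwise lift $u=\sum_n u_n\one_{E_n}$ is again countably valued, hence strongly measurable. You then pass to general $f$ by density and by the $\lpSpace{1}$-continuity of $f\mapsto(\bbT*f)(t)$. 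This is more elementary and rigorous, provided you drop ``(or simple)''. Finitely-valued functions are not dense in $\lpSpaceDT{\infty}{\bbR_+}{X}$ for infinite-dimensional $X$; consider $t\mapsto e_{\lfloor t\rfloor}$ in $\ell^2$. Countably-valued functions are dense, by Pettis' theorem and a countable dense subset of a separable subspace containing almost every value of $f$. The other differences are cosmetic: the order of the cycle, deriving the output formula directly instead of via Lemma~\ref{lem:boundedBC-gen-solution-representation}, and removing $Du$ after rather than before lifting.

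One phrasing issue in your final step. Boundedness of $\bbT*f$ for each compactly supported continuous $f$ separately does not imply exponential stability; the isometric right shift is a counterexample. What is needed, and what you have actually proved, is the uniform estimate $\norm{\bbT*f}_{\lpSpace{\infty}}\le K'\norm{f}_{\lpSpace{\infty}}$ on all of $\lpSpaceDT{\infty}{\bbR_+}{X}$. This is $\lpSpace{\infty}$-BIBO stability of $\systemNode{A}{\id_X}{\id_X}{\resolvent{\argument}{A}}$, to which \cite[Lemma~3.3.14]{Wierzba2025} applies exactly as in the paper.
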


The proof of this proposition utilises the following lemma.
\begin{lemma}
\label{lem:boundedBC-gen-solution-representation}
    Consider a system node $\systemNodeSt$ with $B \in \calL(U,X)$ and $C \in \calL(X,Y)$. Then for any zero-state generalised solution $(u,x,y)$ with $u \in \lpSpace{1}_{\loc}(\bbR_+; U)$ the distributional output lies in $\lpSpace{1}_{\loc}(\bbR_+; Y)$ and is represented by
    \[
        y(t) = C \int_0^t \bbT(t-s) B u(s) \dx s + D u(t)
        \quad\text{for almost every }t\ge0.
    \]
    Here the feedthrough operator
    $D \coloneqq \mathbf{G}(\lambda) - C \resolvent{\lambda}{A} B \in \calL(U,Y)$
    is independent of
    $\lambda\in\bbC_{\gbd(\bbT)}$.
\end{lemma}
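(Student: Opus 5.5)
The plan is to compute the function $F(t)=(C\&D)\int_0^t(t-s)\left[\begin{smallmatrix}x(s)\\u(s)\end{smallmatrix}\right]\dx s$ explicitly and then differentiate it twice in the distributional sense. First, the feedthrough operator is well defined. Since $B\in\calL(U,X)$ and $C\in\calL(X,Y)$, the operator $\resolvent{\lambda}{A}B$ maps $U$ into $X_1$ boundedly, so $D_\lambda\coloneqq\mathbf{G}(\lambda)-C\resolvent{\lambda}{A}B\in\calL(U,Y)$. The difference relation~\eqref{eq:transferFunctionDifferenceRelation} gives $D_\alpha=D_\beta$ immediately, because $A_{-1}$ restricted to the relevant vectors coincides with $A$ and $B$ takes values in $X$.

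Next, I identify $C\&D$. If $\left[\begin{smallmatrix}x\\u\end{smallmatrix}\right]\in\dom{C\&D}$, then $Bu\in X$ and $A_{-1}x+Bu\in X$, so $A_{-1}x\in X$, that is, $x\in X_1$. Moreover $x-\resolvent{\beta}{A}Bu\in X_1$, and $C$ is bounded on $X$. Therefore
\[
    C\&D\begin{bmatrix}x\\u\end{bmatrix}=Cx-C\resolvent{\beta}{A}Bu+\mathbf{G}(\beta)u=Cx+Du .
\]
The domain is $X_1\times U$. In fact, for this identity I only need the formula $C\&D=[C\ \ D]$ on the domain.

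Now I insert the mild solution. For $u\in\lpSpace{1}_{\loc}$, the state $x(s)=\int_0^s\bbT(s-r)Bu(r)\dx r$ lies in $\mathrm C(\bbR_+;X)$, since $B$ is bounded. By~\cite[Lemma~4.7.9]{Staffans2005}, the pair $\int_0^t(t-s)\left[\begin{smallmatrix}x(s)\\u(s)\end{smallmatrix}\right]\dx s$ lies in $\dom{C\&D}$. Because $C$ and $D$ are bounded, they commute with Bochner integrals, and so
\[
    F(t)=\int_0^t(t-s)\bigl(Cx(s)+Du(s)\bigr)\dx s .
\]
Set $g\coloneqq Cx+Du$. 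Then $g\in\lpSpace{1}_{\loc}(\bbR_+;Y)$, because $Cx$ is continuous and $Du$ is locally integrable, and $F(t)=\int_0^t(t-s)g(s)\dx s$. It remains to show that $F''=g$ distributionally. Using Fubini, $F(t)=\int_0^t\int_0^\sigma g(s)\dx s\dx\sigma$. Hence $F$ is $\mathrm C^1$ with an absolutely continuous derivative $F'(t)=\int_0^tg$, and two integrations by parts against $\psi\in\mathrm C_c^\infty((0,\infty))$ give $\int\psi''F=\int\psi g$. This shows that $g$ represents $y$, which is the asserted formula.

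The main obstacle is not analytic but bookkeeping. It consists of justifying that $C\&D$ acts as $Cx+Du$ on the integrated pair, and that $C$ can be pulled inside the integral. Both points hinge on the boundedness of $C$ on all of $X$, which lets the integral be taken in $X$ rather than $X_1$; everything else is routine.
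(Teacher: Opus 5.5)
Your proposal is correct and follows the paper's proof essentially step by step. You identify $\dom{C\&D}=X_1\times U$ with $C\&D$ acting as $Cx+Du$, use boundedness of $C$ and $D$ to get $F(t)=\int_0^t(t-s)\bigl(Cx(s)+Du(s)\bigr)\dx s$, and differentiate twice. Your Fubini and integration-by-parts argument for $F''=Cx+Du$ just spells out a step the paper states without detail.
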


\begin{proof}
    Independence of $\lambda$ follows from
    \eqref{eq:transferFunctionDifferenceRelation}.
    Since $B\in\calL(U,X)$, 
    the mild state
    $
        x(t)=\int_0^t\bbT(t-s)Bu(s)\dx s
    $
    belongs to $\mathrm C(\bbR_+;X)$.
    Together with boundedness of $C$ and $D$, this gives
    $Cx+Du\in \lpSpace{1}_{\loc}(\bbR_+; Y)$.
    Moreover, $B\in\calL(U,X)$ also implies that  
    $\dom{C\&D}=\dom{A}\times U$ with
    \[
        C\&D\begin{bmatrix}z\\v\end{bmatrix}
        =Cz+Dv
        \quad
        \text{for }
        \begin{bmatrix}z\\v\end{bmatrix}
        \in\dom{C\&D}.
    \] 
    Applying this identity to the integrated pair in the
    definition of the generalised output yields
    \[
        F(t)
        =
        (C\&D)\int_0^t(t-s)
            \begin{bmatrix}x(s)\\u(s)\end{bmatrix}\dx s
        =
        \int_0^t(t-s)\bigl(Cx(s)+Du(s)\bigr)\dx s.
    \]
    Taking two distributional derivatives gives
    $y=F''=Cx+Du$.
\end{proof}

\begin{proof}[Proof of Proposition~\ref{prop:id-node-bibo-exp-stab}]
    Denote the zero-state input-output map of the system, the form of which follows from Lemma~\ref{lem:boundedBC-gen-solution-representation}, by
    \[
        (\Gamma u)(t)\coloneqq C \int_0^t \bbT(t-s)Bu(s)\dx s + D u(t).
    \]

    \Implies{enum:ExpStab}{enum:CinftyBIBO}
    Let $M\ge1$ and $\omega>0$ such that
    $\norm{\bbT(t)}\le M e^{-\omega t}$ for all $t\ge 0$.
    Let $u\in \mathrm C_c^\infty((0,\infty);U)$. For each $t>0$, we have
    \[
    \begin{aligned}
        \norm{\Gamma u}_{\lpSpaceDT{\infty}{[0,t]}{Y}}
        &=
        \sup_{0\le r\le t}
         \norm{
            C\int_0^r \bbT(r-s) Bu(s)\dx s + Du(r)
        } \\
        &\le
        M \norm{B}_{\calL(U,X)} \norm{C}_{\calL(X,Y)} \norm{u}_{\lpSpaceDT{\infty}{[0,t]}{U}}
        \sup_{0\le r\le t}
        \int_0^r e^{-\omega(r-s)}\dx s
        +
        \norm{D}
        \norm{u}_{\lpSpaceDT{\infty}{[0,t]}{U}}
        \\
        &\le
        \bigl(M \omega^{-1} \norm{B}_{\calL(U,X)} \norm{C}_{\calL(X,Y)}+ \norm{D}\bigr)
        \norm{u}_{\lpSpaceDT{\infty}{[0,t]}{U}} .
    \end{aligned}
    \]
    Thus the system is $\mathrm C^\infty$-BIBO stable.
    
    \Implies{enum:CinftyBIBO}{enum:LinftyBIBO}
    Since $B$ and $C$ are bounded, Lemma~\ref{lem:boundedBC-gen-solution-representation} shows that for every $u\in \lpSpace{\infty}_{\loc}(\bbR_+; U)$, the
    corresponding generalised output is represented by
    an element of $\lpSpace{1}_{\loc}(\bbR_+; Y)$.
    Corollary~\ref{cor:cinfty-bibo-plus-linfty-outputs-implies-linfty-bibo} thus
    shows that $\mathrm C^\infty$-BIBO stability imples $\lpSpace{\infty}$-BIBO stability.
    
    \Implies{enum:LinftyBIBO}{enum:ExpStab}
    Assume that the system is $\lpSpace{\infty}$-BIBO stable.
    Since adding or subtracting the bounded feedthrough term $Du$ does not
    affect $\lpSpace{\infty}$-BIBO stability, the system
    $
        \systemNode
            {A}
            {B}
            {C}
            {C\resolvent{\argument}{A}B}
    $
    is also $\lpSpace{\infty}$-BIBO stable. Let $c_{\mathrm{bibo}}>0$ be a BIBO
    constant for this system.

    Since $B:U\to X$ is bounded and surjective, the Bartle-Graves theorem \cite[Proposition~1.19(ii)]{BenyaminiLindenstrauss2000}
    yields a continuous map
    $
        S:X\to U
    $
    and a constant $K>0$ such that
    \[
        B\circ S=\id_X 
        \quad\text{and}\quad
        \norm{Sx}_U\le K\norm{x}_X
        \quad
        \text{for all }x\in X.
    \]
    Let $f\in \lpSpace{\infty}_{\loc}(\bbR_+; X)$ and set
    $
        u(t)\coloneqq S(f(t)).
    $
    Since $S$ is continuous, $u$ is strongly measurable with
    \[
        Bu(t)=f(t)
        \quad\text{and}\quad
        \norm{u}_{\lpSpaceDT{\infty}{[0,t]}{U}}
        \le
        K\norm{f}_{\lpSpaceDT{\infty}{[0,t]}{X}}.
    \]
    Thus,
    the fact that $C$ is bounded below together with the choice of $c_{\mathrm{bibo}}$ gives
    that
    \[
        \norm{\int_0^{\argument}\bbT(\argument-s)f(s)\dx s}_{\lpSpaceDT{\infty}{[0,t]}{X}}
        \le
        \delta^{-1}\norm{C\int_0^{\argument}\bbT(\argument-s)f(s)\dx s}_{\lpSpaceDT{\infty}{[0,t]}{Y}}
        \le
        c_{\mathrm{bibo}}K\delta^{-1}
        \norm{f}_{\lpSpaceDT{\infty}{[0,t]}{X}}
    \]
    for some $\delta>0$.
    Hence the system node
    $
        \systemNode
            {A}
            {\id_X}
            {\id_X}
            {\resolvent{\argument}{A}}
    $
    is $\lpSpace{\infty}$-BIBO stable. In turn, $(\bbT(t))_{t\ge0}$ is exponentially stable
    by~\cite[Lemma~3.3.14]{Wierzba2025}.
\end{proof}

Approximate controllability and observability do not suffice
to recover the converse implication.
The following examples exhibit $\lpSpace{\infty}$-BIBO stable systems
whose semigroups are not strongly stable, and can even be unbounded.

\begin{example}[$\lpSpace{\infty}$-BIBO stability without strong stability]
\label{ex:simple-counterex-approx-obs-cont-exp-stable}
    Let $X=U=Y=\lpSpaceD{2}{\bbR_+}$, and let $A$ generate
    the right-shift semigroup $\bbT$ on $X$.
    Set $B=\id_X$ and define $C\in\calL(X,Y)$ by
    \[
        (Cf)(\xi)=e^{-\xi}f(\xi)
        \qquad (\xi\ge0).
    \]
    Consider the system node with transfer function
    $
        \mathbf{G}(\argument)=C\resolvent{\argument}{A}B
    $.
    Thus the feedthrough operator in
    Lemma~\ref{lem:boundedBC-gen-solution-representation}
    is zero.
    The semigroup is isometric and hence not strongly stable.
    Since $B$ has dense range and $C$ is injective,
    Remark~\ref{rem:contrAndobsResults} gives approximate
    controllability and approximate observability in every
    time $t>0$ and for each $p\in[1,\infty]$.

    We next show that the corresponding system node is $\lpSpace{\infty}$-BIBO stable.
    For $f\in X$ and $t\ge 0$, we have
    $
        (C\bbT(t)f)(\xi)
        =
        e^{-\xi} f(\xi-t)\one_{[t,\infty)}(\xi)
    $.
    Therefore
    \begin{align*}
        \norm{C\bbT(t)f}_{\lpSpaceD{2}{\bbR_+}}^2
        =
        \int_t^\infty e^{-2\xi}\modulus{f(\xi-t)}^2\dx{\xi} 
        =
        e^{-2t}\int_0^\infty e^{-2\eta}\modulus{f(\eta)}^2\dx{\eta} 
        \le
        e^{-2t}\norm{f}_{\lpSpaceD{2}{\bbR_+}}^2 .
    \end{align*}
    Thus
    $
        \int_0^\infty \norm{C\bbT(t)B}\dx{t}
        \le
        \int_0^\infty e^{-t}\dx{t}
        =
        1.
    $    
    Hence, for every $u\in \lpSpace{\infty}_{\loc}(\bbR_+; U)$ and zero initial
    state, the corresponding output satisfies
    \[
        \norm{y}_{\lpSpaceDT{\infty}{[0,t]}{Y}}
        \le
        \norm{u}_{\lpSpaceDT{\infty}{[0,t]}{U}}
        \int_0^\infty \norm{C\bbT(s)B}\dx{s}
        \le
        \norm{u}_{\lpSpaceDT{\infty}{[0,t]}{U}}
    \]
    for all $t\ge 0$. Thus $\systemNodeSt$ is $\lpSpace{\infty}$-BIBO stable.
\end{example}

\begin{remark}
    Choose instead $X=U=Y=\lpSpaceD{2}{\bbR_+,(1+\xi)\dxShort\xi}$ in Example~\ref{ex:simple-counterex-approx-obs-cont-exp-stable}.
    Then
    \[
        \norm{\bbT(t)}=(1+t)^{1/2}
        \qquad\text{and}\qquad
        \norm{C\bbT(t)B}
        \le e^{-t}(1+t)^{1/2}.
    \]
    Thus the semigroup is unbounded, whereas the convolution
    estimate in that example still proves $\lpSpace{\infty}$-BIBO stability.
    Approximate controllability and approximate observability
    again follow from Remark~\ref{rem:contrAndobsResults}.
\end{remark}

Note that the semigroup in Example~\ref{ex:simple-counterex-approx-obs-cont-exp-stable} is not analytic. However, even for analytic semigroups, counterexamples can be found.

\begin{example}
[$\lpSpace{\infty}$-BIBO stability with an unbounded analytic semigroup]
\label{ex:analytic-counterex-approx-obs-cont-not-strongly-stable}
    Let $X=U=Y=\lpSpaceDT{2}{[0,1]}{\bbC^2}$
    and define
    \[
        (Af)(\xi)
        \coloneqq
        \begin{bmatrix}
            -\xi & 1\\
            0 & -\xi
        \end{bmatrix}f(\xi),
        \qquad
        (Bf)(\xi)\coloneqq \xi^3f(\xi),
        \qquad
        C\coloneqq\id_X.
    \]
    Consider the system node with transfer function
    $
        \mathbf{G}(\argument)=C\resolvent{\argument}{A}B
    $.
    Since $A\in\calL(X)$, it generates an analytic semigroup given by
    \[
        (\bbT(t)f)(\xi)
        =
        e^{-t\xi}
        \begin{bmatrix}
            1&t\\
            0&1
        \end{bmatrix}
        f(\xi),
        \qquad t\ge0.
    \]
    Moreover,
    $
        \norm{\bbT(t)}
        \ge
        \norm{t e^{-t\argument}}_{\lpSpaceD{\infty}{[0,1]}}
        =t
    $,
    so $(\bbT(t))_{t\ge0}$ is unbounded. Moreover, as $B$ has dense range in $X$, Remark~\ref{rem:contrAndobsResults} ensures approximate controllability for every time $t>0$ and each $p\in [1,\infty]$.

    On the other hand, by uniform continuity, there exists
    $\delta\in(0,t)$ such that
    $\norm{\bbT(s)-\id_X}_{\calL(X)}<2^{-1}$
    for every $0\le s\le\delta$.
    Consequently,
    \[
        \norm{\bbT(s)z}_X
        \ge
        \norm{z}_X-\norm{(\bbT(s)-\id_X)z}_X
        \ge 2^{-1}\norm{z}_X
    \]
    for every $z\in X$ and $0\le s\le\delta$.
    Since $C=\id_X$, it follows that
    \[
        \norm{\Psi_t z}_{\lpSpaceDT{q}{[0,t]}{Y}}
        \ge
        \begin{cases}
            2^{-1}\delta^{1/q}\norm{z}_X, \qquad & q<\infty\\
            2^{-1}\norm{z}_X,                \qquad & q=\infty
        \end{cases}
    \]
    for each $z\in X$.
    Thus the system is finite-time exactly $\lpSpace{q}$-observable for every
    $q\in[1,\infty]$.

    The impulse response is the multiplication operator given by
    \[
        \bigl(C\bbT(t)Bf\bigr)(\xi)
        =
        \xi^3e^{-t\xi}
        \begin{bmatrix}
            1&t\\
            0&1
        \end{bmatrix}f(\xi).
    \]
    Using
    $
        \sup_{\xi\in[0,1]}\xi^3e^{-t\xi}
        \le
        \min\{1,27e^{-3}t^{-3}\}
        \lesssim
        (1+t)^{-3}
    $,
    we obtain
    \[
        \norm{C\bbT(t)B}_{\calL(U,Y)}
        \lesssim
        (1+t)
        \sup_{\xi\in[0,1]}\xi^3e^{-t\xi}
        \lesssim
        (1+t)^{-2}.
    \]
    Thus
    $
        C\bbT(\argument)B
        \in \lpSpaceDT{1}{\bbR_+}{\calL(U,Y)}
    $.
    Since $B$ and $C$ are bounded and
    $\mathbf{G}(s)=C\resolvent{s}{A}B$,
    Lemma~\ref{lem:boundedBC-gen-solution-representation}
    and Young's inequality imply that the system node
    is $\lpSpace{\infty}$-BIBO stable.
\end{example}

\begin{proposition}
\label{prop:null-cont-final-obs-exponStab}
    Let $\systemNodeSt$ be an $\lpSpace{\infty}$-BIBO stable system node. Assume that
    \begin{enumerate}[\upshape (a)]
        \item $B$ is $\lpSpace{\infty}$-null-controllable in
        finite time, and
        \item $C$ is
        $\lpSpace{r}$-finally-observable in finite time for some $r\in[1,\infty]$.
    \end{enumerate}
    Then the semigroup $(\bbT(t))_{t\ge0}$ generated by $A$ is exponentially
    stable.
\end{proposition}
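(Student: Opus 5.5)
The plan is to turn $\lpSpace{\infty}$-BIBO stability into a uniform bound on sums $\sum_j \bbT(j\tau_0)z_j$ over bounded sequences $(z_j)$, and then to apply a discrete Datko/van Neerven-type argument.

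\emph{Setup.} Fix a time $\tau>0$ in which (a) holds and a time $\tau'>0$ in which (b) holds. Null-controllability presupposes $\lpSpace{\infty}$-control-admissibility, so $\Phi_\tau\in\calL(\lpSpaceDT{\infty}{[0,\tau]}{U},X)$. The inclusion $\Ima\bbT(\tau)\subseteq\Ima\Phi_\tau$ then gives, by a closed graph / open mapping argument on the quotient $\lpSpaceDT{\infty}{[0,\tau]}{U}/\ker\Phi_\tau$, a constant $K>0$ with the following property: for every $x\in X$ there is $u_x$ with $\Phi_\tau u_x=-\bbT(\tau)x$ and $\norm{u_x}_\infty\le K\norm{x}$.

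\emph{Superposition step.} Let $(x_k)_{k\ge0}$ be a sequence in the closed unit ball of $X$. Place the controls $u_{x_k}$ back to back, setting
\[
    u\coloneqq\sum_k u_{x_k}(\argument-k\tau)\one_{[k\tau,(k+1)\tau)}.
\]
This input lies in $\lpSpaceDT{\infty}{\bbR_+}{U}$ with norm at most $K$, so its zero-state output is bounded in $\lpSpace{\infty}$ by $c_{\mathrm{bibo}}K$. The key identification is for times $t=(n+1)\tau+s$ with $s\in(0,\tau)$. There, linearity, causality, and the fact that the later pieces vanish on $(0,t)$ (so only the pieces $k\le n$ contribute) should give
\[
    y(t)=\sum_{k=0}^{n}C\bbT\bigl((n-k)\tau+s\bigr)\Phi_\tau u_{x_k}
    =-\sum_{k=0}^{n}C\bbT\bigl((n-k+1)\tau+s\bigr)x_k ,
\]
with the identity holding distributionally and hence almost everywhere. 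Here one uses that the state reached at time $(k+1)\tau$ lies in $X$, thanks to admissibility, and that the output of a free motion from an $X$-initial state is $\Psi x$, by the observation-admissibility implicit in (b). This identification is the step I expect to be the main technical obstacle. It requires that the generalised output of a control vanishing after time $\tau$ coincide with the observed free evolution of $x(\tau)$, justified via uniqueness of generalised solutions and the time-shift property of system nodes from Staffans.

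\emph{Final observability.} Apply (b) to $z=\sum_{k\le n}\bbT((n-k+1)\tau)x_k$. The observation $\Psi_{\tau'}z$ on $[0,\tau']$ is a piece of the output above, up to choosing $\tau'\le\tau$, or more generally splitting $[0,\tau']$ into finitely many intervals of length less than $\tau$. Hence $\norm{\Psi_{\tau'}z}_r\le \tau'^{1/r}c_{\mathrm{bibo}}K$, and we obtain
\[
    \Bigl\|\sum_{k=0}^n S^{\,n-k+1}\bbT(\tau')x_k\Bigr\|\le M
    \qquad\text{for all }n\text{ and all such sequences},
\]
where $S\coloneqq\bbT(\tau)$ and $M$ is independent of $n$ and of the sequence.

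\emph{Conclusion.} First take $x_n=x$ and all other $x_k=0$; this shows that $\sup_n\norm{S^n\bbT(\tau')}<\infty$, and together with local boundedness of the semigroup this makes $\bbT$ bounded. Next fix $x$ with $\norm{x}\le1$ and choose $x_k=S^{n-k}x\,/\,\sup_j\norm{\bbT(j)}$. Then every summand equals $S^{n+1}\bbT(\tau')x$ up to a constant, so
\[
    (n+1)\norm{S^{n+1}\bbT(\tau')x}\lesssim M .
\]
Hence $\norm{\bbT((n+1)\tau+\tau')}\to0$. Some $\norm{\bbT(t_0)}<1$ then yields $\gbd(\bbT)<0$, that is, exponential stability.
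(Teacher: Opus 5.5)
Your route works and differs from the paper's, but the bookkeeping needs repair in two places.

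\textbf{First correction (truncation).} For an infinite sequence $(x_k)$ it is false that the later pieces vanish on $(0,t)$ when $t=(n+1)\tau+s$. The piece $k=n+1$ is active on $[(n+1)\tau,t)$, so your formula for $y(t)$ omits its contribution, and splitting $[0,\tau']$ does not fix this. Use only sequences with $x_k=0$ for $k>n$. Then the input vanishes on $[(n+1)\tau,\infty)$ and the state at $(n+1)\tau$ is $-z$. Hence the output on $[(n+1)\tau,(n+1)\tau+\tau']$ is exactly $-\Psi_{\tau'}z$ for any $\tau'>0$, with no restriction $\tau'\le\tau$. This gives your bound $\norm{\sum_{k=0}^n S^{n-k+1}\bbT(\tau')x_k}\le M$.

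\textbf{Second correction (indexing).} In that bound the earliest piece carries the highest power, so your choices in the conclusion are reversed. The choice $x_n=x$ only yields $\norm{S\bbT(\tau')x}\le M$. The choice $x_k=S^{n-k}x/L$ produces summands $S^{2(n-k)+1}\bbT(\tau')x/L$, which are not all equal. Take instead $x_0=x$ with all other terms zero. This gives $\sup_n\norm{\bbT((n+1)\tau+\tau')}\le M$, hence $L\coloneqq\sup_{t\ge0}\norm{\bbT(t)}<\infty$. Then take $x_k=S^kx/L$ for $k\le n$, which gives $(n+1)\norm{S^{n+1}\bbT(\tau')x}\le ML$ and hence the decay you want. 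With these corrections the proof is complete.

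\textbf{Comparison with the paper.} The paper fixes a common time $t$ for both properties. It superposes copies of a single control $v_x$ with unimodular phases $\overline{\lambda}^{\,j}$ and deduces that $\sum_{k=0}^n\lambda^k\bbT(t)^k$ is uniformly bounded for every $\modulus{\lambda}=1$. It then concludes $\spr(\bbT(t))<1$ from Lemma~\ref{lem:power-sum-spectrum}, via the spectral mapping theorem for polynomials and compactness of the spectrum.

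You instead steer towards a different state in each period. This bounds the whole discrete convolution $\sum_k S^{n-k}x_k$ over bounded sequences, a discrete $\lpSpace{\infty}$ Datko/van Neerven-type condition. The elementary choice $x_k=S^kx$ then gives the explicit decay $\norm{\bbT(t)}=O(1/t)$ without any spectral theory.

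The price is the preliminary boundedness step, which the paper's spectral argument avoids. A side benefit is that you need not enlarge the times so that controllability and observability hold simultaneously, since the BIBO bound is global in time. Both arguments rely on the same quotient-norm selection of controls. Both also use the same time-shift identification of the post-control output with $\Psi$ applied to the current state.
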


\begin{proof}
    For $t>0$, denote the admissible control and observation maps by
    \[
        \Phi_t : \lpSpaceDT{\infty}{[0,t]}{U} \to X
        \quad\text{and}\quad
        \Psi_t : X\to \lpSpaceDT{r}{[0,t]}{Y}
    \]
    respectively.
    Controls on $[0,t]$ are extended by zero when shifted below.
    By increasing the controllability and observability times if necessary, we
    may choose $t>0$ such that 
    $
        \Ima\bbT(t)
            \subseteq
        \Ima\Phi_t
    $
    and there exists $c_{\mathrm{obs}}>0$ such that
    $\norm{\Psi_t z}_{\lpSpaceDT{r}{[0,t]}{Y}}
        \ge
        c_{\mathrm{obs}}\norm{\bbT(t)z}_X$
    for all $z\in X$.
    Since 
    $
        \Ima\bbT(t)
            \subseteq
        \Ima\Phi_t
    $,
    the operator
    \[
        \Phi_t^{-1}\bbT(t)
        :
        X
        \longrightarrow
        \lpSpaceDT{\infty}{[0,t]}{U}/\ker\Phi_t
    \]
    is bounded by~\cite[Lemma~9.10.10]{Staffans2005}. Hence there exists $K>0$ such that
    \[
        \norm{
            \Phi_t^{-1}\bbT(t)x
        }_{\lpSpaceDT{\infty}{[0,t]}{U}/\ker\Phi_t}
        \le
        \frac{K}{2}\norm{x}_X
        \qquad
        \text{for all }x\in X.
    \]
    By the definition of the quotient norm, for every $x\in X$ one can
    therefore choose $v_x\in \lpSpaceDT{\infty}{[0,t]}{U}$ with $\norm{v_x}_{\lpSpaceDT{\infty}{[0,t]}{U}}\le K\norm{x}_X$ and $\Phi_t v_x=\bbT(t)x$.

    Fix now $\lambda\in\bbC$ with $\modulus{\lambda}=1$ and $n\in\bbN$.
    For $x\in X$, choose $v_x$ as above and define
    \[
        u_n
        \coloneqq
        \sum_{j=0}^{n-1}
        \overline{\lambda}^{\,j}
        v_x(\argument-jt)
        \one_{[jt,(j+1)t)}
        \in \lpSpaceDT{\infty}{\bbR_+}{U}
        .
    \]
    Then $\norm{u_n}_{\lpSpaceDT{\infty}{\bbR_+}{U}}\le K\norm{x}_X$.
    Let $(u_n,x_n,y_n)$ be the corresponding generalised solution with zero
    initial state. Then
    \begin{equation}
        \label{eq:power-sums}
        \begin{split}
            x_n(nt)
            &=\sum_{j=0}^{n-1} \int_{jt}^{(j+1)t}\bbT_{-1}(nt-s)Bu_n(s)\dx s
            =\sum_{j=0}^{n-1}\overline{\lambda}^{\,j} \int_{jt}^{(j+1)t}\bbT_{-1}(nt-s)B v_x(s-jt)\dx s\\
            &=\sum_{j=0}^{n-1}\overline{\lambda}^{\,j} \int_{0}^{t}\bbT_{-1}((n-j)t-s)B v_x(s)\dx s
            =\sum_{j=0}^{n-1}\overline{\lambda}^{\,j}\bbT( (n-j-1)t)\Phi_{t}v_x\\
            &=\sum_{j=0}^{n-1} 
            \overline{\lambda}^{\,j}
            \bbT(t)^{n-1-j}\Phi_t v_x
            =
            \sum_{j=0}^{n-1}
            \overline{\lambda}^{\,j}
            \bbT(t)^{n-j}x
            =
            \overline{\lambda}^{\,n-1}
            \bbT(t)\sum_{k=0}^{n-1}\lambda^{k}\bbT(t)^k x;
        \end{split}
    \end{equation}
    where we have used $\overline{\lambda}=\lambda^{-1}$.
    Since $u_n$ vanishes on $[nt,\infty)$, for each $s\ge 0$,
    \[
        x_n(nt+s)=\int_0^{nt} \bbT_{-1}(nt+s-r)Bu_n(r)\dx r+\int_{nt}^{nt+s} \bbT_{-1}(nt+s-r)Bu_n(r)\dx r
                =\bbT(s)x_n(nt).
    \]
    
    By time invariance of the system node, the shifted triple $(u_n(nt+\argument), x_n(nt+\argument), y_n(nt+\argument))$ is a
    generalised solution with input $u_n(nt+\argument)=0$ and initial state $x_n(nt)$. 
    By $\lpSpace{r}$-observation-admissibility, the shifted output is represented by
    $
        y_n(nt+\argument)
        =
        \Psi_t x_n(nt)
    $
    in $\lpSpaceDT{r}{[0,t]}{Y}$.
    Set
    \[
        c_{t,r}
        \coloneqq
        \begin{cases}
            t^{1/r}, & r<\infty,\\
            1,          & r=\infty
        \end{cases}
    \]
    and
    let $c_{\mathrm{bibo}}>0$ be a BIBO constant.
    By final observability,
    \begin{align*}
        c_{\mathrm{obs}}
        \norm{\bbT(t)x_n(nt)}_X
        &\le
        \norm{y_n}_{\lpSpaceDT{r}{[nt,(n+1)t]}{Y}}
        \le
        c_{t,r}
        \norm{y_n}_{\lpSpaceDT{\infty}{[nt,(n+1)t]}{Y}}\\
        &\le
        c_{t,r}c_{\mathrm{bibo}}
        \norm{u_n}_{\lpSpaceDT{\infty}{\bbR_+}{U}}
        \le
        c_{t,r}c_{\mathrm{bibo}}K\norm{x}_X.
    \end{align*}
    Substituting in~\eqref{eq:power-sums} and using
    $\modulus{\lambda}=1$, we obtain
    \[
        \norm{
            \bbT(t)^2
            \sum_{k=0}^{n-1}\lambda^k\bbT(t)^k x
        }_X
        \le
        \frac{c_{t,r}c_{\mathrm{bibo}}K}
             {c_{\mathrm{obs}}}
        \norm{x}_X.
    \]
    In particular,
    \[
        \sup_{n\in\bbN}
        \norm{
            \bbT(t)^2\sum_{k=0}^{n-1}\lambda^k\bbT(t)^k
        }_{\calL(X)}
        <\infty.
    \]
    Combining with
    \begin{align*}
        \sum_{k=0}^{n+1}\lambda^k\bbT(t)^k
        =
        \id_X+\lambda \bbT(t)
        +
        \sum_{k=2}^{n+1}\lambda^k\bbT(t)^k
        =
        \id_X+\lambda \bbT(t)
        +
        \lambda^2\bbT(t)^2
        \sum_{k=0}^{n-1}\lambda^k\bbT(t)^k,
    \end{align*}
    it follows that for every
    $\lambda\in\bbC$ with $\modulus{\lambda}=1$, there exists 
    $c_\lambda>0$ such that
    $
        \norm{
            \sum_{k=0}^{n}\lambda^k\bbT(t)^k
        }_{\calL(X)}
        \le c_\lambda
    $
    for all $n\in\bbN_0$.
    Lemma~\ref{lem:power-sum-spectrum}\ref{lem:power-sum-spectrum:itm:all-phases} therefore yields $\spr(\bbT(t))<1$, and in turn, exponential stability of $(\bbT(\tau))_{\tau\ge0}$.
\end{proof}

The preceding proof used the following spectral theoretic lemma.

\begin{lemma}
    \label{lem:power-sum-spectrum}
    Let $T$ be a bounded linear operator on a Banach space $X$ with spectral radius $\spr(T)$. 
    \begin{enumerate}[\upshape(a)]
        \item\label{lem:power-sum-spectrum:itm:positive} 
        If $\sup_{n\in\bbN_0} \norm{\sum_{k=0}^n T^k} <\infty$, then $[1,\infty)\subseteq \resSet(T)$.

        \item\label{lem:power-sum-spectrum:itm:all-phases} 
        If $\sup_{n\in\bbN_0}\norm{\sum_{k=0}^n \lambda^k T^k} <\infty $ for every $\lambda \in \bbC$ with $\modulus{\lambda} = 1$,
        then $\spr(T) < 1$.
    \end{enumerate}
\end{lemma}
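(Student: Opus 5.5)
For part~\ref{lem:power-sum-spectrum:itm:positive}, set $S_n\coloneqq\sum_{k=0}^n T^k$ and $M\coloneqq\sup_n\norm{S_n}$. From $T^{n+1}=S_{n+1}-S_n$ we get $\sup_n\norm{T^n}\le 2M$. Hence $T$ is power bounded and $\spr(T)\le1$, so $(1,\infty)\subseteq\resSet(T)$. It remains to show $1\in\resSet(T)$. The plan is to use an Abel-type argument. For $\mu>1$,
\[
(\mu-T)^{-1}=\sum_{k\ge0}\mu^{-k-1}T^k .
\]
Summation by parts rewrites this as $\sum_{n\ge0}(\mu^{-n-1}-\mu^{-n-2})S_n$. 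Its norm is at most $M\sum_n(\mu^{-n-1}-\mu^{-n-2})=M\mu^{-1}\le M$. So $\norm{(\mu-T)^{-1}}$ stays bounded as $\mu\downarrow1$. Since $\norm{(\mu-T)^{-1}}\ge \operatorname{dist}(\mu,\sigma(T))^{-1}$, the point $1$ cannot lie in $\sigma(T)$: if it did, the resolvent norm would blow up as $\mu\to1$. Thus $[1,\infty)\subseteq\resSet(T)$.

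For part~\ref{lem:power-sum-spectrum:itm:all-phases}, apply~\ref{lem:power-sum-spectrum:itm:positive} to $\lambda T$ for each $\modulus{\lambda}=1$. This gives $1\in\resSet(\lambda T)$, that is, $\overline{\lambda}\in\resSet(T)$. As $\lambda$ ranges over the unit circle, so does $\overline\lambda$, so the whole unit circle lies in $\resSet(T)$. Part~\ref{lem:power-sum-spectrum:itm:positive} also gives $\spr(T)\le1$. Hence $\sigma(T)$ is a compact subset of the closed unit disc that misses the unit circle. Since the spectrum is compact, its maximum modulus is attained, and therefore $\spr(T)<1$.

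The only delicate point is excluding $1$ from the spectrum. Boundedness of the power sums alone does not put $1$ in the resolvent set by any direct Neumann-series argument. The uniform resolvent bound as $\mu\downarrow 1$ does this, because the resolvent norm blows up near boundary points of the spectrum. The other potentially delicate point, the passage from pointwise (for each $\lambda$) hypotheses to a strict inequality for $\spr(T)$, is handled by compactness of the spectrum, so no uniformity in $\lambda$ is needed.
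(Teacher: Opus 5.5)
Your proof is correct. Part (b) is the paper's argument: apply (a) to $\lambda T$, place every $\mu$ with $\modulus{\mu}\ge1$ (in particular the unit circle) in $\resSet(T)$, and conclude by compactness of the spectrum.

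For part (a) you take a genuinely different route. The paper argues in one line via the easy inclusion $p(\sigma(T))\subseteq\sigma(p(T))$ of the polynomial spectral mapping theorem. If some $\mu\in[1,\infty)$ were in $\sigma(T)$, then $\sum_{k=0}^n\mu^k\in\sigma\bigl(\sum_{k=0}^nT^k\bigr)$. Hence $n+1\le\spr\bigl(\sum_{k=0}^nT^k\bigr)\le\norm{\sum_{k=0}^nT^k}$ for every $n$, contradicting the uniform bound. This treats all of $[1,\infty)$ at once, including the boundary point $1$, and needs neither power-boundedness nor resolvent estimates. The step you flag as the delicate one, excluding $1$, is therefore not delicate in that approach.

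Your argument has two steps. First, power-boundedness gives $\spr(T)\le1$. Second, Abel summation gives $\norm{(\mu-T)^{-1}}\le M/\mu$ for $\mu>1$, and the lower bound $\norm{(\mu-T)^{-1}}\ge\operatorname{dist}(\mu,\sigma(T))^{-1}$ then rules out $1\in\sigma(T)$. This is longer, but it is quantitative. Once $1\in\resSet(T)$, continuity of the resolvent gives $\norm{(I-T)^{-1}}\le M$, an explicit bound in terms of the power-sum bound. The paper's contradiction argument does not give this, though the paper only needs the qualitative statement.
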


\begin{proof}
    \ref{lem:power-sum-spectrum:itm:positive}
    Assume that $\mu \in [1,\infty)$ were a spectral value of $T$. 
    By the spectral mapping theorem for polynomials, the number $\sum_{k=0}^n \mu^k$ is then a spectral value of $\sum_{k=0}^n T^k$ for each $n \in \bbN_0$. 
    Hence, 
    \begin{align*}
        n+1 
        \le 
        \sum_{k=0}^n \mu^k 
        \le 
        \spr\Big(\sum_{k=0}^n T^k\Big) 
        \le 
        \norm{\sum_{k=0}^n T^k} 
        \le 
        \sup_{m\in\bbN_0} \norm{\sum_{k=0}^m T^k}
    \end{align*}
    for all $n \in \bbN_0$, which is a contradiction.

    \ref{lem:power-sum-spectrum:itm:all-phases}
    It follows from the assumption and from~\ref{lem:power-sum-spectrum:itm:positive} that every $\mu \in \bbC$ with $\modulus{\mu} \ge 1$ is in the resolvent set of $T$. 
    Since the spectrum of $T$ is compact we conclude that $\spr(T) < 1$.
\end{proof}

The proof of Proposition~\ref{prop:null-cont-final-obs-exponStab} uses $\lpSpace{\infty}$-BIBO stability to estimate the output on
a fixed observation interval after the input has been switched off. For a
global $\lpSpace{p}$-to-$\lpSpace{q}$ input-output property with $q<\infty$, one can instead
sum the output estimates over infinitely many consecutive observation
intervals. This yields a discrete Datko estimate and leads to the following
analogous result.

\begin{proposition}
\label{prop:lp-lq-io-exp-stab}
    Let $p\in[1,\infty]$ and $q\in[1,\infty)$. 
    Let $\systemNodeSt$ be a system node such that $B$ is finite-time $\lpSpace{p}$-null-controllable, and $C$ is finite-time $\lpSpace{q}$-finally-observable.
    
    In addition, assume that, for every $u\in \lpSpaceDT{p}{\bbR_+}{U}$, the
    zero-initial-state generalised output is represented by an element of
    $\lpSpaceDT{q}{\bbR_+}{Y}$.
    Then the semigroup generated by $A$ is exponentially stable.
\end{proposition}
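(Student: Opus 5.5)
We follow the proof of Proposition~\ref{prop:null-cont-final-obs-exponStab}, but replace the power-sum argument by a discrete Datko estimate. First we show that the zero-state input-output map $\Gamma\colon\lpSpaceDT{p}{\bbR_+}{U}\to\lpSpaceDT{q}{\bbR_+}{Y}$ is bounded. It is everywhere defined and linear. Its graph is closed: if $u_k\to u$ in $\lpSpace{p}$ and $\Gamma u_k\to g$ in $\lpSpace{q}$, then on each interval $(0,t)$ we have $u_k\to u$ in $\lpSpace{1}$. Lemma~\ref{lem:L1-to-distributional-output-continuity} then gives $\Gamma u_k\to y_u$ distributionally, while $\lpSpace{q}$-convergence gives $\Gamma u_k\to g$ distributionally. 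Hence $g$ represents $y_u$. The closed graph theorem yields a constant $c_{\mathrm{io}}$ with $\norm{\Gamma u}_q\le c_{\mathrm{io}}\norm{u}_p$.

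Next we fix $t>0$ so that null-controllability and final observability both hold in time $t$; as before, we may enlarge one of the two times if necessary. By \cite[Lemma~9.10.10]{Staffans2005} and the quotient-norm argument, for each $x\in X$ we pick $v_x\in\lpSpaceDT{p}{[0,t]}{U}$ with $\Phi_tv_x=\bbT(t)x$ and $\norm{v_x}_p\le K\norm{x}_X$. Write $S\coloneqq\bbT(t)$. The natural input is now the single pulse $u\coloneqq v_x$ on $[0,t]$, extended by zero. Then $x_u(t)=Sx$ and $x_u(t+s)=\bbT(s)Sx$ for $s\ge0$. By time invariance and $\lpSpace{q}$-observation admissibility, the output on $[jt,(j+1)t]$ for $j\ge1$ is $\Psi_tS^{j}x$. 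Final observability then gives
\[
c_{\mathrm{obs}}\norm{S^{j+1}x}_X\le\norm{\Gamma u}_{\lpSpaceDT{q}{[jt,(j+1)t]}{Y}}.
\]
Raising to the $q$-th power and summing over $j\ge1$, using $q<\infty$, yields
\[
\sum_{j\ge2}\norm{S^jx}^q\le c_{\mathrm{obs}}^{-q}c_{\mathrm{io}}^qK^q\norm{x}^q\quad\text{for all }x\in X.
\]

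The remaining step is to pass from this discrete Datko estimate to $\spr(S)<1$. We would use the discrete Datko--Pazy argument. First, $\sup_j\norm{S^jx}<\infty$ for each $x$, so the uniform boundedness principle gives $\sup_j\norm{S^j}=:L<\infty$. Next, for $n\ge2$,
\[
n\norm{S^nx}^q\le L^q\sum_{j=2}^{n+1}\norm{S^jx}^q\le L^qM\norm{x}^q,
\]
where $M$ is the constant in the summed estimate. Hence $\norm{S^n}<1$ for $n$ large, so $\spr(\bbT(t))<1$, and exponential stability of $\bbT$ follows from the spectral radius formula.

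The step we expect to be the main obstacle is the closed-graph boundedness of $\Gamma$, specifically the identification of the limits. The hypothesis only asserts that outputs exist in $\lpSpace{q}$, not that any estimate holds, and when $p=\infty$ one must check that $\lpSpace{\infty}$-convergence implies local $\lpSpace{1}$-convergence, which it does on finite intervals. The observation step also requires the shifted output to coincide with $\Psi_t$ applied to the state. This follows exactly as in the proof of Proposition~\ref{prop:null-cont-final-obs-exponStab}.
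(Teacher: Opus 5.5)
Your proof is correct and matches the paper's argument step for step: closed-graph boundedness of $\Gamma$, a single null-controlling pulse $v_x$, and summing the final-observability estimates over the intervals $[jt,(j+1)t]$ to get $\sum_{j\ge2}\norm{S^jx}_X^q\le M\norm{x}_X^q$. The only difference is that you prove the discrete Datko step by hand instead of citing \cite[Proposition~1.2]{Weiss1989d}. In that step your sum should run over $j=2,\dots,n$, giving $(n-1)\norm{S^nx}_X^q\le L^qM\norm{x}_X^q$, since $\norm{S^nx}_X\le L\norm{S^jx}_X$ requires $j\le n$; this harmless slip does not affect the conclusion.
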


\begin{proof}
    First observe that the zero-state input-output map
    \[
        \Gamma:
        \lpSpaceDT{p}{\bbR_+}{U}
        \longrightarrow
        \lpSpaceDT{q}{\bbR_+}{Y},
        \qquad
        u\longmapsto y_u,
    \]
    is automatically bounded. Indeed, suppose that
    $
        u_n\to u
    $
    in $\lpSpaceDT{p}{\bbR_+}{U}$ and
    $
        \Gamma u_n\to y
    $
    in $\lpSpaceDT{q}{\bbR_+}{Y}$.
    For every $t>0$ we have
    $
        u_n\to u
    $
    in $\lpSpaceDT{1}{[0,t]}{U}$.
    Applying Lemma~\ref{lem:L1-to-distributional-output-continuity}
    to the zero extensions of
    $u_n\restrict{(0,t)}$ and $u\restrict{(0,t)}$,
    and using causality, gives
    $
        \Gamma u_n\longrightarrow\Gamma u
    $
    distributionally on $(0,t)$. 
    On the other hand,
    $
        \Gamma u_n\to y
    $
    in $\lpSpaceDT{q}{[0,t]}{Y}$ and therefore also distributionally on $(0,t)$.
    Thus $y=\Gamma u$ on $(0,t)$. Since $t>0$ was arbitrary, $\Gamma$ is closed, hence bounded. Thus, there exists
    $\kappa>0$ such that
    \[
        \norm{y_u}_{\lpSpaceDT{q}{\bbR_+}{Y}}
        \le
        \kappa\norm{u}_{\lpSpaceDT{p}{\bbR_+}{U}}
        \qquad
        \text{for all }
        u\in \lpSpaceDT{p}{\bbR_+}{U}.
    \]

    As in the proof of Proposition~\ref{prop:null-cont-final-obs-exponStab},
    by increasing the controllability and observability times if necessary,
    we may choose $t>0$ such that
    $\Ima\bbT(t)\subseteq\Ima\Phi_t$ and there exists
    $\delta>0$ with
    $\norm{\Psi_t z}_{\lpSpaceDT{q}{[0,t]}{Y}}
    \ge \delta\norm{\bbT(t)z}_X$ for every $z\in X$.
    The same quotient-space argument also yields a constant $K>0$ such that,
    for every $x\in X$, there exists
    $u_x\in \lpSpaceDT{p}{[0,t]}{U}$ satisfying
    $\Phi_t u_x=\bbT(t)x$ and
    $\norm{u_x}_{\lpSpaceDT{p}{[0,t]}{U}}\le K\norm{x}_X$.

    Extend $u_x$ by zero to $\bbR_+$ and let $(u_x,x_x,y_x)$ be the
    corresponding generalised solution with zero initial state. By assumption,
    $\norm{y_x}_{\lpSpaceDT{q}{\bbR_+}{Y}}
    \le \kappa K\norm{x}_X$.
    Since $u_x$ vanishes on $[t,\infty)$,
    time invariance and $\lpSpace{q}$-admissibility of $C$ imply that, for every
    $k\in\bbN_0$, the restriction of $y_x$ to
    $[(k+1)t,(k+2)t]$ is the zero-input output corresponding to
    the initial state $\bbT(t)^{k+1}x$. Hence final observability gives
    \[
        \kappa^q K^q\norm{x}_X^q
        \ge
        \norm{y_x}_{\lpSpaceDT{q}{\bbR_+}{Y}}^q
        \ge
        \delta^q\sum_{k=0}^\infty\norm{\bbT(t)^{k+2}x}_X^q .
    \]
    So, there exists $M>0$ such that
    $\sum_{k=0}^\infty\norm{\bbT(t)^k x}_X^q\le M\norm{x}_X^q$ for every
    $x\in X$. Since $q<\infty$, the discrete Datko theorem \cite[Proposition~1.2]{Weiss1989d} yields $\spr(\bbT(t))<1$, and therefore
    exponential stability of $\bbT$.
\end{proof}

\section{BIBO stability of derived single-input single-output systems}   
\label{sec:BIBOSISO}

Consider a system node $\systemNodeSt$ on $(U,X,Y)$.
For $u_0\in U$ and $y_0\in Y'$, restricting the input to
$v\mapsto u_0v$ and scalarising the output by $y_0$ gives the
single-input single-output (SISO) system node
$\systemNode{A}{Bu_0}{y_0C}{y_0\mathbf{G} u_0}$
on $(\bbC,X,\bbC)$; see Figure~\ref{fig:derivedSISOsystems}.
If $\systemNodeSt$ is $\mathrm C^\infty$-BIBO stable or
$\lpSpace{\infty}$-BIBO stable with constant $c$, then each derived
SISO system has that property with constant
$c\norm{u_0}_U\norm{y_0}_{Y'}$.
In particular, $c$ is a common BIBO constant for all normalised
directions $\norm{u_0}_U=\norm{y_0}_{Y'}=1$.

\begin{figure}[h!]
    \centering
    \tikzstyle{block} = [draw, rectangle, rounded corners=1, thick]
    \tikzstyle{empty} = [minimum height=1em, minimum width=1em]
    \begin{tikzpicture}[auto, node distance=1.5cm,>=]
    \node [empty, name=input] {$\bbC$};
    \node [block, right of=input, node distance=1.2cm] (scInput) {$u_0\argument$};
    \node [block, right of=scInput, node distance=1.5cm, scale=2.3] (system) {$\scriptstyle\Sigma$};   
    \node [block, right of=system, node distance=1.8cm] (scOutput) {$\duality{y_0}{\argument}$};
    \node [empty, right of=scOutput, node distance=1.3cm] (output) {$\bbC$}; 
    \draw [-latex,thick] (input) -- (scInput);
    \draw [-latex,thick] (scInput) -- (system);
    \draw [-latex,thick] (system) -- (scOutput);
    \draw [-latex,thick] (scOutput) -- (output);
    \end{tikzpicture}
    \caption{SISO systems derived from a general system node $\Sigma$}
    \label{fig:derivedSISOsystems}
\end{figure}

Then, conversely, does uniform BIBO stability of all normalised derived SISO
systems imply BIBO stability of the full system?
Such an implication would provide a useful criterion, since
Theorem~\ref{thm:finiteDimBIBOCharacterisation} characterises
BIBO stability of the derived systems by scalar measures of
bounded total variation.
The following example shows that the full system need not
even be $\mathrm C^\infty$-BIBO stable, even if both control
and observation operators are bounded.

\begin{example}[Uniform SISO BIBO stability without BIBO stability]
    \label{exa:siso-bibo-bounded-B-C}
    Consider $U = X = \lpSpaceD{1}{\bbR}$ and $Y = \bbC$. Let $A$ be the generator of the right shift semigroup $\bbT$ on $X$. Set $B = \id$ and
    $Cf=\int_0^1 f(\xi)\dx\xi$, and choose
    $\mathbf{G}(\argument)=C\resolvent{\argument}{A}{B}$.
    By Lemma~\ref{lem:boundedBC-gen-solution-representation},
    the feedthrough operator is zero.
    
    Fix $u_0\in U$ and
    $u\in\mathrm C_c^\infty((0,\infty))$.
    The zero-state output corresponding to the input $u_0u$
    satisfies, by Tonelli's theorem,
    \begin{align*}
        \modulus{y(t)}
        \le
        \norm{u}_{\lpSpaceD{\infty}{[0,t]}}
        \int_0^\infty\int_0^1
            \modulus{u_0(\xi-s)}\dx\xi\dx s
        \le
        \norm{u}_{\lpSpaceD{\infty}{[0,t]}}\norm{u_0}_U,
        \qquad (t>0).
    \end{align*}
    Thus $\systemNode{A}{Bu_0}{C}{\mathbf{G} u_0}$
    is $\mathrm C^\infty$-BIBO stable with constant
    $\norm{u_0}_U$, and hence also $\lpSpace{\infty}$-BIBO stable by
    Theorem~\ref{thm:finiteDimBIBOCharacterisation}.
    Further scalarisation of the output multiplies this bound
    by $\norm{y_0}_{Y'}$.
    Consequently, $1$ is a common BIBO constant for all
    normalised derived SISO systems.

    To show that the full system is not
    $\mathrm C^\infty$-BIBO stable, use the isometric group
    extension of $\bbT$ to negative times.
    Fix $\varphi\in\mathrm C_c^\infty(\bbR;[0,1])$
    with $\varphi=1$ on $[0,1]$.
    For each $t>1$, choose
    $\rho_t\in\mathrm C_c^\infty((0,\infty);[0,1])$
    with $\rho_t=1$ on $[1,t]$, and define
    $u_t(s)=\rho_t(s)\bbT(s-t)\varphi$ for $s\ge0$.
    Since $\varphi\in\dom{A^m}$ for every $m\in\bbN$,
    we have $u_t\in\mathrm C_c^\infty((0,\infty);U)$.
    Moreover,
    $\norm{u_t}_{\lpSpaceDT{\infty}{\bbR_+}{U}}=\norm{\varphi}_U$,
    independently of $t$.
    Since $C\varphi=1$, the corresponding zero-state output
    satisfies
    \[
        y_{u_t}(t)
        =\int_0^t\rho_t(s)C\varphi\dx s
        =\int_0^t\rho_t(s)\dx s
        \ge t-1.
    \]
    This rules out $\mathrm C^\infty$-BIBO stability. 
\end{example}

One may ask whether exponential stability restores this
implication.
If $B\in\calL(U,X)$ and $C\in\calL(X,Y)$, exponential
stability already implies $\lpSpace{\infty}$-BIBO stability by
Lemma~\ref{lem:boundedBC-gen-solution-representation}
and the usual convolution estimate.
The next example shows that the implication fails for
unbounded control and observation operators, even on Hilbert
spaces and with infinite-time $\lpSpace{2}$-admissibility.

\begin{example}[Uniform SISO BIBO stability without BIBO stability despite exponential stability on a Hilbert space]
\label{exa:siso-bibo-unbounded-B-C}
    Let $U=X=Y=\ell^2$, with canonical basis $(e_n)_{n\ge1}$.
    Set $\lambda_n=2^n$ and define
    \[
        Ae_n=-\lambda_ne_n,
        \qquad
        \dom{A}
        =
        \left\{
            x\in\ell^2:
            (\lambda_nx_n)_{n\ge1}\in\ell^2
        \right\}.
    \]
    Then $A$ is self-adjoint and generates the exponentially
    stable analytic semigroup
    $\bbT(t)e_n=e^{-\lambda_nt}e_n$.

    The control and observation operators $B\coloneqq (-A_{-1})^{1/2}\in \calL(X, X_{-1})$ and $C\coloneqq (-A)^{1/2}\in \calL(X_1,X)$ are infinite-time $\lpSpace{2}$-admissible since
    \[
        \norm{
            \int_0^t\bbT_{-1}(t-s)Bu(s)\dx s
        }_X^2
        \le
        \frac12\int_0^t\norm{u(s)}_X^2\dx s,
        \qquad
        \int_0^\infty\norm{C\bbT(t)x}_X^2\dx t
        =\frac12\norm{x}_X^2.
    \]
    The control estimate follows by applying the
    Cauchy--Schwarz inequality in each coordinate.
    Choose
    $\mathbf{G}(z)\coloneqq(-A)\resolvent{z}{A}\in\calL(X)$
    for $\re z>-2$.
    The diagonal formulas show that $\mathbf{G}$ satisfies
    \eqref{eq:transferFunctionDifferenceRelation}.

    For any zero-state classical solution with input $u \in \mathrm C_c^\infty((0,\infty); X)$, the corresponding output is given coordinate-wise by  
    \[
        y_u(t)
        =-A_{-1}\int_0^t\bbT(t-s)u(s)\dx s
        =-\int_0^t\bbT_{-1}(t-s)A_{-1}u(s)\dx s.
    \]
    Since $A$ is unbounded,
    \cite[Theorem~2.9]{JacobSchwenningerWintermayr2022}
    implies that $A_{-1}$ is not $\lpSpace{\infty}$-admissible.
    Thus there exist $t>0$ and $u\in \lpSpaceDT{\infty}{[0,t]}{X}$
    such that
    $Q_t u\coloneqq
    \int_0^t\bbT_{-1}(t-s)A_{-1}u(s)\dx s\notin X$.
    Choose $u_n\in\mathrm C_c^\infty((0,t);X)$, extended
    by zero, such that $u_n\to u$ in $\lpSpaceDT{1}{[0,t]}{X}$ and
    $\norm{u_n}_{\lpSpace{\infty}}\le\norm{u}_{\lpSpace{\infty}}$.
    
    If the system were $\mathrm C^\infty$-BIBO stable, the
    output formula would imply that $(Q_t u_n)$ is bounded
    in $X$.
    Since $X$ is reflexive, a subsequence would converge
    weakly in $X$ to some $q\in X$.
    However, continuity of
    $Q_t:\lpSpaceDT{1}{[0,t]}{X}\to X_{-1}$
    gives $Q_t u_n\to Q_t u$ in $X_{-1}$.
    The continuous embedding $X\hookrightarrow X_{-1}$
    then implies $Q_t u=q\in X$, a contradiction.
    Hence the system is not $\mathrm C^\infty$-BIBO stable.
    
    Let $b,c\in\ell^2$ and write
    $c^*x=\sum_{n=1}^\infty\overline{c_n}x_n$.
    For the derived SISO system
    $\systemNode{A}{Bb}{c^*C}{c^*\mathbf{G} b}$,
    the transfer function is
    $c^*\mathbf{G}(z)b
    =\sum_{n=1}^\infty
        \overline{c_n}b_n\lambda_n/(z+\lambda_n)$.
    Since
    $h_{b,c}(\argument)\coloneqq
    \sum_{n=1}^\infty
        \overline{c_n}b_n\lambda_ne^{-\lambda_n \argument}$
    satisfies
    \[
        \norm{h_{b,c}}_{\lpSpaceD{1}{\bbR_+}}
        \le
        \sum_{n=1}^\infty
            \modulus{b_n}\modulus{c_n}
            \int_0^\infty\lambda_ne^{-\lambda_nt}\dx t
        =
        \sum_{n=1}^\infty\modulus{b_n}\modulus{c_n}
        \le\norm{b}_{\ell^2}\norm{c}_{\ell^2},
    \]
    the series converges absolutely in $\lpSpaceD{1}{\bbR_+}$.
    Termwise Laplace transformation therefore gives
    $\calL\{h_{b,c}\}(z)=c^*\mathbf{G}(z)b$ for $\re z>0$.
    By Theorem~\ref{thm:finiteDimBIBOCharacterisation},
    the derived system is BIBO stable with constant
    $\norm{b}_{\ell^2}\norm{c}_{\ell^2}$.
    In particular, $1$ is a common BIBO constant for all
    normalised derived SISO systems.
\end{example}

\section{Conditions for BIBO stability in terms of derived single-output systems}
\label{sec:BIBOfromScalarOutput}

In the previous section we saw that even uniform BIBO
stability of the derived SISO systems does not in general
imply BIBO stability of the full system.
Scalarising the input and the output, however, play
different roles.
Scalarising the output preserves the full class of
$U$-valued inputs, and the family of all output
scalarisations retains the norm information about each
classical output, since the functionals in $Y'$ determine
the norm on $Y$.
By contrast, scalarising the input fixes a direction
$u_0\in U$ and restricts the inputs to functions of the
form $u(t)=v(t)u_0$, where $v$ is scalar-valued.
Such inputs cannot change their direction in $U$ over time.
This suggests that the obstruction to recovering
$\mathrm C^\infty$-BIBO stability in
Section~\ref{sec:BIBOSISO} arises from the input
scalarisation.

Indeed, we show in the following that $\mathrm C^\infty$-BIBO stability of all derived scalar-output systems implies $\mathrm C^\infty$-BIBO stability of the full system.  We then derive kernel criteria
for this property and give additional conditions ensuring
$\lpSpace{\infty}$-BIBO stability.

\begin{theorem}
\label{thm:OutputScalarisationGivesBIBO}
    Assume that, for every $y'\in Y'$, the scalar-output
    system node $\systemNode{A}{B}{y'C}{y'\mathbf{G}}$ is
    $\mathrm C^\infty$-BIBO stable.
    Then $\systemNodeSt$ is $\mathrm C^\infty$-BIBO stable.
    
    Moreover, if $c(y')\ge 0$ denotes the optimal BIBO constant of each derived system, then the optimal BIBO constant of the full system is
    $K\coloneqq\sup_{\norm{y'}_{Y'}\le1}c(y')<\infty$.
\end{theorem}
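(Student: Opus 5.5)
The plan is a uniform boundedness argument. For each $y'\in Y'$, the classical output of the scalar-output node $\systemNode{A}{B}{y'C}{y'\mathbf{G}}$ with input $u\in\mathrm C_c^\infty((0,\infty);U)$ and zero initial state is $\duality{y'}{y_u(\argument)}$, where $y_u$ is the classical output of $\systemNodeSt$. I would verify this from the definition of the combined observation/feedthrough operator: $\dom{C\&D}$ depends only on $A$ and $B$, and $(y'C)\&(y'\mathbf G)=y'\circ(C\&D)$. The state trajectory is the same, so $(u,x,\duality{y'}{y_u})$ is the classical solution of the scalar node. Consider the set
\[
    \mathcal Y\coloneqq\left\{ y_u(\tau)\colon u\in\mathrm C_c^\infty((0,\infty);U),\ \norm{u}_{\lpSpaceDT{\infty}{\bbR_+}{U}}\le1,\ \tau\ge0\right\}\subseteq Y.
\]
Since $y_u$ is continuous, the hypothesis and Remark~\ref{rem:BIBO-constants-estimates}(a) give $\sup_{z\in\mathcal Y}\modulus{\duality{y'}{z}}\le c(y')<\infty$ for every $y'\in Y'$. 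Thus $\mathcal Y$ is weakly bounded.

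Viewing $\mathcal Y$ as a family of functionals on the Banach space $Y'$ via the canonical embedding $Y\hookrightarrow Y''$, the uniform boundedness principle gives $K'\coloneqq\sup_{z\in\mathcal Y}\norm{z}_Y<\infty$. By linearity and scaling, this yields $\norm{y_u}_{\lpSpaceDT{\infty}{\bbR_+}{Y}}\le K'\norm{u}_{\lpSpaceDT{\infty}{\bbR_+}{U}}$ for all smooth compactly supported $u$ (the case $u=0$ is trivial). Remark~\ref{rem:BIBO-constants-estimates}(a) then turns this into the local estimate, so $\systemNodeSt$ is $\mathrm C^\infty$-BIBO stable.

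To identify the optimal constant, two inequalities are needed. First, any BIBO constant $c$ of the full system is a BIBO constant of each derived system, with value $c\norm{y'}$. Hence $c(y')\le c_{\mathrm{opt}}\norm{y'}$, and so $K\le c_{\mathrm{opt}}$; in particular $K<\infty$. Second, by Hahn--Banach, for each $u$ and $\tau$ one has $\norm{y_u(\tau)}=\sup_{\norm{y'}\le1}\modulus{\duality{y'}{y_u(\tau)}}\le K\norm{u}_{\lpSpace{\infty}}$, using the optimal constants $c(y')$ of the derived systems. This gives $c_{\mathrm{opt}}\le K$.

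The only delicate step is the identification of the scalarised classical output with $\duality{y'}{y_u}$, that is, checking that the derived triple is a system node (the transfer-function identity \eqref{eq:transferFunctionDifferenceRelation} composes with $y'$) and that its $C\&D$ equals $y'\circ(C\&D)$ on the same domain. Everything after that is the uniform boundedness principle combined with a norming argument.
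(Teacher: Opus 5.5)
Your proposal is correct and follows essentially the same route as the paper. Both arguments apply the uniform boundedness principle to the point evaluations $y_u(\tau)$, for $\|u\|_{L^\infty}\le 1$ and $\tau\ge0$, viewed in $Y''$, and then use a norming/duality argument to show that $\sup_{\|y'\|\le1}c(y')$ is the optimal constant; your explicit check that the scalar node's classical output is $\langle y', y_u\rangle$ is a step the paper leaves implicit.
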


\begin{proof}
    For $u\in\mathrm C_c^\infty((0,\infty);U)$, let $y_u$
    denote the corresponding zero-state classical output.
    The $\mathrm C^\infty$-BIBO estimate for the
    scalar-output system, together with continuity of its
    output, gives
    $\sup_{\tau\ge0}\modulus{\duality{y'}{y_u(\tau)}}
    \le c(y')\norm{u}_{\lpSpaceDT{\infty}{\bbR_+}{U}}$
    for every $y'\in Y'$.
    Consider the family $\calF\subseteq Y''$ consisting of
    the functionals
    $F_{u,\tau}(y')\coloneqq\duality{y'}{y_u(\tau)}$,
    where $\tau\ge0$ and
    $\norm{u}_{\lpSpaceDT{\infty}{\bbR_+}{U}}\le1$.
    The preceding estimate shows that
    $\sup_{F\in\calF}\modulus{F(y')}\le c(y')<\infty$
    for every $y'\in Y'$, and thus,
    by the uniform boundedness principle,
    $K_0\coloneqq\sup_{F\in\calF}\norm{F}_{Y''}<\infty$.

    Since $\norm{F_{u,\tau}}_{Y''}=\norm{y_u(\tau)}_Y$,
    homogeneity yields the global estimate
    \[
        \norm{y_u}_{\lpSpaceDT{\infty}{\bbR_+}{Y}}
        \le K_0\norm{u}_{\lpSpaceDT{\infty}{\bbR_+}{U}}
        \qquad
        \text{for every }
        u\in\mathrm C_c^\infty((0,\infty);U).
    \]
    Remark~\ref{rem:BIBO-constants-estimates}(a),
    applied to the full system, converts this into
    \[
        \norm{y_u}_{\lpSpaceDT{\infty}{[0,t]}{Y}}
        \le K_0\norm{u}_{\lpSpaceDT{\infty}{[0,t]}{U}}
        \qquad
        \text{for every }t>0.
    \]
    Thus $\systemNodeSt$ is $\mathrm C^\infty$-BIBO stable.

    It remains to identify its optimal constant.
    Every admissible BIBO constant $c$ for the full system
    gives $c(y')\le c\norm{y'}_{Y'}$, and hence
    $K\coloneqq\sup_{\norm{y'}_{Y'}\le1}c(y')\le c$.
    Taking $c=K_0$ shows that $K\le K_0<\infty$.
    Conversely, the scalar-output estimate and duality
    give $\norm{F}_{Y''}\le K$ for every $F\in\calF$,
    so $K_0\le K$.
    Consequently, $K=K_0$ is the optimal BIBO constant.
\end{proof}

\begin{remark}
\label{rem:output-scalarisation-only-gives-cinfty}
    Even $\lpSpace{\infty}$-BIBO stability of every derived
    scalar-output system does not imply $\lpSpace{\infty}$-BIBO
    stability of the full system.
    Indeed, the system in
    Example~\ref{ex:c-infty-but-not-l-infty} is
    $\mathrm C^\infty$-BIBO stable but not
    $\lpSpace{\infty}$-BIBO stable.
    Every derived scalar-output system is nevertheless
    $\lpSpace{\infty}$-BIBO stable by
    Theorem~\ref{thm:reflexive-output-cinfty-implies-linfty-bibo},
    since its output space is $\bbC$.

    If $Y$ is reflexive, the conclusion of
    Theorem~\ref{thm:OutputScalarisationGivesBIBO} does imply
    $\lpSpace{\infty}$-BIBO stability with the same optimal constant,
    again by
    Theorem~\ref{thm:reflexive-output-cinfty-implies-linfty-bibo}.
\end{remark}

\subsection{Kernel estimates and output representations}

We now derive kernel criteria for BIBO stability.
Two auxiliary lemmas provide the necessary uniform estimates
and identify generalised outputs from the transfer function.
We then apply these results to semigroup kernels and their
output scalarisations.

In what follows, for each operator valued function $k:\bbR_+\to\calL(U,Y)$, we define the \emph{scalarisation operator associated with $k$} acting on elements $y'\in Y'$ as 
\[
    \calS_ky'\coloneqq k(\argument)'y'.
\]
The following lemma describes a situation in which $\calS_k$ can be understood as a bounded operator from $Y'$ into $\lpSpaceDT{1}{\bbR_+}{U'}$.

\begin{lemma}
    \label{lem:weak-kernel-bounded-scalarization}
    Let $k:\bbR_+\to\calL(U,Y)$ be such that, for every $y'\in Y'$, 
    $
        k(\argument)'y'\in \lpSpaceDT{1}{\bbR_+}{U'}
    $.
     Then the associated scalarisation operator $\calS_k: Y'\to \lpSpaceDT{1}{\bbR_+}{U'}$
    is bounded with
    \[
        \norm{\calS_k}_{\calL(Y',\lpSpaceDT{1}{\bbR_+}{U'})}
        =
        \sup_{\norm{y'}_{Y'}\le1}
        \int_0^\infty
            \norm{k(t)'y'}_{U'}
        \dx t
        <\infty.
    \]
\end{lemma}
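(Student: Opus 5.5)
The natural approach is the closed graph theorem applied to the linear map $\calS_k: Y'\to \lpSpaceDT{1}{\bbR_+}{U'}$, $y'\mapsto k(\argument)'y'$. Linearity is immediate: $k(t)'$ is linear for each $t$, so $k(t)'(\alpha y_1'+y_2')=\alpha k(t)'y_1'+k(t)'y_2'$ pointwise. Both $Y'$ and $\lpSpaceDT{1}{\bbR_+}{U'}$ are Banach spaces, so boundedness reduces to closedness of the graph. Once boundedness is known, the norm formula is just the definition of the operator norm, since $\norm{\calS_k y'}_{\lpSpace{1}}=\int_0^\infty\norm{k(t)'y'}_{U'}\dx t$. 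The operator norm is finite precisely because $\calS_k$ is bounded.

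For closedness, I take $y_n'\to y'$ in $Y'$ with $\calS_k y_n'\to g$ in $\lpSpaceDT{1}{\bbR_+}{U'}$, and aim to show $g=k(\argument)'y'$ almost everywhere.

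\begin{itemize}
\item Pass to a subsequence along which $k(t)'y_n'\to g(t)$ in $U'$ for almost every $t$. This uses the standard fact that $\lpSpace{1}$-convergence of Bochner functions yields an a.e.\ convergent subsequence.
\item For each fixed $t$, $k(t)'\in\calL(Y',U')$ is bounded, since it is the adjoint of the bounded operator $k(t)\in\calL(U,Y)$. Hence $k(t)'y_n'\to k(t)'y'$ in norm, for every $t$.
\item Combining the two limits gives $g(t)=k(t)'y'$ for almost every $t$. Thus $g=\calS_k y'$ in $\lpSpaceDT{1}{\bbR_+}{U'}$, and the graph is closed.
\end{itemize}

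Alternatively, one can avoid the closed graph theorem. Let $\calS_k^{(N)}$ denote the truncation of $\calS_k$ to functions on $[0,N]$, and observe that $y'\mapsto\norm{\calS_k^{(N)}y'}_{\lpSpace{1}}$ increases to $\norm{\calS_k y'}_{\lpSpace{1}}$ as $N\to\infty$. This route still needs some pointwise lower semicontinuity argument, so the closed graph route is cleaner and is the one I would use.

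There is no serious obstacle; the only step requiring care is the identification of the limit. Some care is needed because $\lpSpace{1}$-convergence only gives almost everywhere convergence after passing to a subsequence. The hypothesis guarantees that each $k(\argument)'y'$ is already a strongly measurable element of $\lpSpaceDT{1}{\bbR_+}{U'}$, so no measurability issue arises in the identification.
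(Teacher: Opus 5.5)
Your proposal is correct and follows the paper's own proof: the closed graph theorem, an almost everywhere convergent subsequence of $k(\argument)'y_n'$, and identification of the limit through boundedness of each adjoint $k(t)'\in\calL(Y',U')$. The norm formula then follows from the definition of the operator norm, as you note.
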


\begin{proof}
    Suppose that
    $
        y_n'\to y'
    $
    in $Y'$ and
    $
        \calS_k y_n'\to g
    $
    in $\lpSpaceDT{1}{\bbR_+}{U'}$. Passing to a subsequence, we may assume that
    $
        \calS_k y_n'(t)\to g(t)
    $
    in $U'$ for almost every $t>0$. On the other hand, $k(t)'y_n'\to k(t)'y'$ in $U'$ for every $t>0$ for which $k(t)\in\calL(U,Y)$.
    Hence
    $
        g=k(\argument)'y'
    $
    almost everywhere. Thus $\calS_k$ has closed graph, and the closed graph
    theorem yields the assertion.
\end{proof}

The next lemma identifies the generalised output when a
locally integrable kernel is compatible with the transfer
function. It supplies the output representation used in
Proposition~\ref{prop:kernel-bibo-corrected-SWZ} and again
in Section~\ref{subsec:kernel-linfty-bibo}.

\begin{lemma}
    \label{lem:weak-laplace-output-representation}
    Let $\systemNodeSt$ be a system node and let
    $k\in \lpSpace{1}_{\loc}(\bbR_+; \calL(U,Y))$.
    Assume that $k(\argument)'y'\in \lpSpaceDT{1}{\bbR_+}{U'}$
    for every $y'\in Y'$.
    Suppose that there exists $D\in\calL(U,Y)$ such that
    \[
        \duality{y'}{\mathbf{G}(\lambda)v}
        =
        \duality{y'}{D v}
        +
        \int_0^\infty e^{-\lambda t}
            \duality{k(t)'y'}{v}\dx t
    \]
    for every $v\in U$, every $y'\in Y'$, and every
    $\re\lambda>\max\{\gbd(\bbT),0\}$.
    Then every zero-state generalised output corresponding
    to $u\in \lpSpace{1}_{\loc}(\bbR_+; U)$ is represented by
    $Du+k*u\in \lpSpace{1}_{\loc}(\bbR_+; Y)$.
    Moreover, $\systemNodeSt$ is $\lpSpace{\infty}$-BIBO stable with
    constant
    $\norm{D}_{\calL(U,Y)}
    +\norm{\calS_k}_{\calL(Y',\lpSpaceDT{1}{\bbR_+}{U'})}$.
\end{lemma}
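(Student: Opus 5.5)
The plan is to identify the distributional output through Laplace transforms of the integrated function $F$ from the definition of the generalised output, and then read off the BIBO estimate from Lemma~\ref{lem:weak-kernel-bounded-scalarization}.

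\emph{Step 1: restriction to nice inputs.} By causality and the continuity in Lemma~\ref{lem:L1-to-distributional-output-continuity}, it suffices to prove the representation for $u\in\lpSpace{1}_{\loc}(\bbR_+;U)$ supported in a fixed interval $[0,t]$. The map $u\mapsto Du+k*u$ is continuous from $\lpSpaceDT{1}{[0,t]}{U}$ into $\lpSpaceDT{1}{[0,t]}{Y}$, because $k\in\lpSpace{1}_{\loc}$. Hence, by density and Lemma~\ref{lem:L1-to-distributional-output-continuity}, it is even enough to treat $u\in\mathrm C_c^\infty((0,\infty);U)$. Such an input has a classical zero-state solution, and $x$ and $y$ grow at most exponentially. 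Indeed, $x(\tau)=\bbT(\tau-s_0)x(s_0)$ after the support ends, and the output is $C\bbT(\argument)$ applied to an element of $X_1$.

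\emph{Step 2: Laplace transform identification.} For $\re\lambda$ large, taking Laplace transforms in \eqref{eq:systemNodeDifferentialEqns} gives $\hat y(\lambda)=\mathbf{G}(\lambda)\hat u(\lambda)$. This is the standard frequency-domain identity for system nodes. One can check it directly from $\hat x(\lambda)=\resolvent{\lambda}{A_{-1}}B\hat u(\lambda)$ and the definition of $C\&D$ with $\beta=\lambda$.

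Now pair with $y'\in Y'$. Since $k(\argument)'y'\in\lpSpaceDT{1}{\bbR_+}{U'}$ and $u$ is bounded with compact support, Fubini gives
\[
\duality{y'}{\mathbf{G}(\lambda)\hat u(\lambda)}=\duality{y'}{D\hat u(\lambda)}+\int_0^\infty e^{-\lambda\tau}\duality{y'}{(k*u)(\tau)}\dx\tau.
\]
To justify this, I would apply the hypothesis to each vector $\hat u(\lambda)=\int e^{-\lambda s}u(s)\dx s$. The hypothesis is linear and continuous in $v$, so it can be interchanged with the Bochner integral.

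Both $\duality{y'}{y}$ and $\duality{y'}{Du+k*u}$ are then scalar $\lpSpace{1}_{\loc}$ functions with equal Laplace transforms on a right half-plane. By uniqueness of the scalar Laplace transform they agree almost everywhere. The functions $y$ and $Du+k*u$ are strongly measurable; for the convolution this holds since $k\in\lpSpace{1}_{\loc}(\bbR_+;\calL(U,Y))$ is Bochner. Working in a separable subspace with a countable norming set, as in the proof of Proposition~\ref{prop:cinfty-bibo-existing-linfty-output-estimate}, then gives $y=Du+k*u$ almost everywhere.

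\emph{Step 3: the estimate.} For $u\in\lpSpace{\infty}_{\loc}$, $\tau\le t$ and $\norm{y'}\le1$,
\[
\modulus{\duality{y'}{(k*u)(\tau)}}\le\int_0^\tau\norm{k(\tau-s)'y'}_{U'}\norm{u(s)}\dx s\le\norm{\calS_k}\norm{u}_{\lpSpaceDT{\infty}{[0,t]}{U}},
\]
where $\norm{\calS_k}$ is finite by Lemma~\ref{lem:weak-kernel-bounded-scalarization}. Taking the supremum over a countable norming set, again using strong measurability, yields the $\lpSpace{\infty}$ bound with constant $\norm{D}+\norm{\calS_k}$.

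The main obstacle is Step 2. One must rigorously justify that the distributional output has Laplace transform $\mathbf{G}\hat u$, including the growth bounds, and interchange the weak kernel identity with the vector-valued integral $\hat u(\lambda)$. An alternative I would keep in mind is to compute $\hat F(\lambda)=\lambda^{-2}\mathbf{G}(\lambda)\hat u(\lambda)$ directly from the definition of $F$, and then work with $F$ rather than $y$.
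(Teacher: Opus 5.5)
Your proposal is correct and follows essentially the same route as the paper. You identify the output for $\mathrm C_c^\infty$ inputs through $\widehat{y}=\mathbf{G}\widehat{u}$ (the paper cites \cite[Lemma~4.7.12]{Staffans2005} for this), combined with the scalarised compatibility identity, the convolution theorem and uniqueness of scalar Laplace transforms; you then extend to $\lpSpace{1}_{\loc}$ inputs via Lemma~\ref{lem:L1-to-distributional-output-continuity} and Young's inequality, and bound $\duality{y'}{(k*u)(\tau)}$ by $\norm{\calS_k}$ using Lemma~\ref{lem:weak-kernel-bounded-scalarization}, exactly as the paper does. Your countable-norming-set steps are harmless but not needed: equality of all scalarisations already identifies the two $Y$-valued distributions, and the bound on $\modulus{\duality{y'}{(k*u)(\tau)}}$ holds for all $y'$ simultaneously at each $\tau$ where the Bochner convolution is defined.
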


\begin{proof}
    For every $y'\in Y'$, set
    $H_{y'}(\lambda)\coloneqq
    \int_0^\infty e^{-\lambda \tau}k(\tau)'y'\dx \tau$.
    By assumption,
    \begin{equation}
    \label{eq:weak-laplace-compatibility-all-lambda}
        y'\mathbf{G}(\lambda)
        =
        y'D+H_{y'}(\lambda)
    \end{equation}
    for every $y'\in Y'$ and every $\lambda\in\bbC$ with
    $\re\lambda>\max\{\gbd(\bbT),0\}$.

    We first show that the conclusion is true for compactly supported smooth functions. To this end, let $v\in \mathrm C_c^\infty((0,\infty);U)$ and let $y_v$
    be the corresponding zero-state classical output, which
    exists by~\cite[Lemma~4.7.8]{Staffans2005}.
    By~\cite[Theorem~4.7.11]{Staffans2005}, this is also the
    generalised output.
    Define
    \[
        z_v(\tau)
        \coloneqq
        Dv(\tau)
        +
        \int_0^\tau k(\tau-s)v(s)\dx s.
    \]
    Fix $y'\in Y'$. Since $y'k\in \lpSpaceDT{1}{\bbR_+}{U'}$,
    the convolution theorem,
    \eqref{eq:weak-laplace-compatibility-all-lambda}, and
    \cite[Lemma~4.7.12]{Staffans2005} give
    \begin{align*}
        \calL\{\duality{y'}{z_v}\}(\lambda)
        &=
        \left(
            y'D+H_{y'}(\lambda)
        \right)
        \widehat v(\lambda)
        =
        y'\mathbf{G}(\lambda)\widehat v(\lambda)
        =
        \calL\{\duality{y'}{y_v}\}(\lambda)
    \end{align*}
    for all $\lambda\in\bbC$ with
    $\re\lambda>\max\{\gbd(\bbT),0\}$.
    By uniqueness of the Laplace transform,
    $\duality{y'}{y_v}=\duality{y'}{z_v}$
    for every $y'\in Y'$, and hence
    \[
        y_v(\tau)
        =
        Dv(\tau)
        +
        \int_0^\tau k(\tau-s)v(s)\dx s
        \quad
        \text{ as a $Y$-valued distribution.}
    \]

    Now let $u\in \lpSpace{1}_{\loc}(\bbR_+; U)$ and fix $t>0$.
    Choose $v_n\in \mathrm C_c^\infty((0,t);U)$ such that
    $v_n\to u$ in $\lpSpaceDT{1}{[0,t]}{U}$.
    By causality and
    Lemma~\ref{lem:L1-to-distributional-output-continuity},
    the corresponding outputs satisfy
    $y_{v_n}\to y_u$ distributionally on $(0,t)$.
    On the other hand, $Dv_n\to Du$ in $\lpSpaceDT{1}{[0,t]}{Y}$
    and Young's inequality gives
    \[
        \norm{
            k*(v_n-u)
        }_{\lpSpaceDT{1}{[0,t]}{Y}}
        \le
        \norm{k}_{\lpSpaceDT{1}{[0,t]}{\calL(U,Y)}}
        \norm{v_n-u}_{\lpSpaceDT{1}{[0,t]}{U}}
        \longrightarrow0.
    \]
    Consequently,
    \[
        Dv_n+k*v_n
        \longrightarrow
        Du+k*u
        \qquad
        \text{in }\lpSpaceDT{1}{[0,t]}{Y},
    \]
    and hence distributionally.
    As the assertion holds for compactly supported smooth functions,
    uniqueness of the distributional limit yields
    \[
        y_u(\tau)
        =
        Du(\tau)
        +
        \int_0^\tau k(\tau-s)u(s)\dx s
        \quad
        \text{for almost every }\tau\in(0,t).
    \]
    Since $t>0$ was arbitrary, the representation holds
    locally on $\bbR_+$.

    Now let $u\in \lpSpace{\infty}_{\loc}(\bbR_+; U)$ and fix $t>0$.
    By Lemma~\ref{lem:weak-kernel-bounded-scalarization},
    $\norm{\calS_k}_{\calL(Y', \lpSpaceDT{1}{\bbR_+}{U'})}<\infty$.
    The output representation and duality give, for almost every
    $\tau\in(0,t)$,
    \begin{align*}
        \norm{y_u(\tau)}_Y
        &\le
        \norm{D}_{\calL(U,Y)}
        \norm{u}_{\lpSpaceDT{\infty}{[0,t]}{U}}
        +\sup_{\norm{y'}_{Y'}\le1}
          \int_0^\tau
            \norm{k(\tau-s)'y'}_{U'}\norm{u(s)}_U\dx s\\
        &\le
        \bigl(\norm{D}_{\calL(U,Y)}+\norm{\calS_k}_{\calL(Y', \lpSpaceDT{1}{\bbR_+}{U'})}\bigr)
        \norm{u}_{\lpSpaceDT{\infty}{[0,t]}{U}}.
    \end{align*}
    Taking the essential supremum over $\tau\in(0,t)$ proves
    the asserted BIBO estimate.
\end{proof}

For the kernel criteria below we first record a corrected
form of
\cite[Proposition~4.3]{SchwenningerWierzbaZwart2024}.
In its proof, a resolvent difference is represented by a
semigroup integral converging in $X_{-1}$, and the
observation operator is then passed through this integral.
This passage requires further justification: $C_Z$ is
continuous on $Z$, whereas the stated assumptions do not
ensure convergence of the integral in $Z$.
In particular, integrability of
$C_Z\bbT_{-1}(\argument)B$ does not imply integrability
of $\bbT_{-1}(\argument)B$ in the $Z$-norm.

The additional assumptions below ensure that the semigroup
restricts to a $C_0$-semigroup on $Z$.
Applying its Laplace-resolvent formula to suitable shifted
control operators then justifies the required passage.
This yields
$\mathbf{G}=D+\calL\{C_Z\bbT_{-1}(\argument)B\}$
on a common right half-plane, with $D\in\calL(U,Y)$.

\begin{proposition}
    \label{prop:kernel-bibo-corrected-SWZ}
    Let $\systemNodeSt$ be a system node and let $Z$ be a Banach space
    satisfying the following conditions:
    \begin{enumerate}[\upshape (a)]
        \item \label{prop:kernel-bibo-corrected-SWZ:itm:embeddings}
        The embeddings $X_1 \hookrightarrow Z \hookrightarrow X_{-1}$ are continuous,
        
        \item \label{prop:kernel-bibo-corrected-SWZ:itm:observation-extends}
        $C$ has an extension $C_Z\in\calL(Z,Y)$,
        
        \item \label{prop:kernel-bibo-corrected-SWZ:itm:input-in-Z}
        $\bbT_{-1}(t)B\in\calL(U,Z)$ for almost every $t>0$, and
        
        \item \label{prop:kernel-bibo-corrected-SWZ:itm:Z-invariant}
        $\bbT_{-1}(t)Z\subseteq Z$ for every $t\ge0$ and the
        restrictions $\bbT_Z(t)\coloneqq\bbT_{-1}(t)\restrict{Z}$
        form a $C_0$-semigroup on $Z$.
    \end{enumerate}
    If $C_Z\bbT_{-1}(\argument)B\in \lpSpaceDT{1}{\bbR_+}{\calL(U,Y)}$, then $\systemNodeSt$ is $\lpSpace{\infty}$-BIBO stable.
\end{proposition}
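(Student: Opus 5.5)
The plan is to reduce everything to Lemma~\ref{lem:weak-laplace-output-representation}. Take the kernel $k(t)\coloneqq C_Z\bbT_{-1}(t)B$, which by hypothesis lies in $\lpSpaceDT{1}{\bbR_+}{\calL(U,Y)}$. In particular $k(\argument)'y'\in\lpSpaceDT{1}{\bbR_+}{U'}$ for every $y'\in Y'$, with $\norm{\calS_k}\le\norm{k}_{\lpSpace{1}}$. It therefore suffices to find $D\in\calL(U,Y)$ with
\[
    \mathbf{G}(\lambda)v=Dv+\int_0^\infty e^{-\lambda t}k(t)v\dx t
\]
for all $v\in U$ and all $\lambda$ in some right half-plane $\re\lambda>\omega_1$ with $\omega_1\ge\max\{\gbd(\bbT),0\}$, and then to extend this identity to $\re\lambda>\max\{\gbd(\bbT),0\}$. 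Lemma~\ref{lem:weak-laplace-output-representation} then yields $\lpSpace{\infty}$-BIBO stability with constant $\norm{D}+\norm{k}_{\lpSpace{1}}$.

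\textbf{Step 1: the resolvent difference.} Fix $\beta,\lambda$ real and large, larger than $\gbd(\bbT)$ and the growth bound of $\bbT_Z$. For $v\in U$, set $w\coloneqq(\beta-\lambda)\resolvent{\beta}{A_{-1}}Bv\in X$. By the resolvent identity,
\[
    \resolvent{\lambda}{A_{-1}}Bv-\resolvent{\beta}{A_{-1}}Bv=\resolvent{\lambda}{A}w.
\]
The goal is to show that this equals $\int_0^\infty e^{-\lambda t}\bbT_Z(t)z\dx t$ for some $z\in Z$ with the integral converging in $Z$. This is exactly the point where the original argument was incomplete. The idea is a ``shifted control operator'' trick. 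Write
\[
    \resolvent{\lambda}{A_{-1}}Bv=\int_0^\infty e^{-\lambda t}\bbT_{-1}(t)Bv\dx t
\]
in $X_{-1}$, and split the integral at a small $\varepsilon>0$ for which $\bbT_{-1}(\varepsilon)B\in\calL(U,Z)$, which is available by~\ref{prop:kernel-bibo-corrected-SWZ:itm:input-in-Z}. For $t\ge\varepsilon$ we have $\bbT_{-1}(t)Bv=\bbT_Z(t-\varepsilon)\bigl(\bbT_{-1}(\varepsilon)Bv\bigr)$. By~\ref{prop:kernel-bibo-corrected-SWZ:itm:Z-invariant}, the tail $\int_\varepsilon^\infty$ is a Bochner integral in $Z$, namely $e^{-\lambda\varepsilon}\resolvent{\lambda}{A_Z}\bbT_{-1}(\varepsilon)Bv$, and the two topologies agree by~\ref{prop:kernel-bibo-corrected-SWZ:itm:embeddings}. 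Applying $C_Z$ then commutes with the tail integral, giving
\[
    C_Z\int_\varepsilon^\infty e^{-\lambda t}\bbT_{-1}(t)Bv\dx t=\int_\varepsilon^\infty e^{-\lambda t}k(t)v\dx t.
\]

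\textbf{Step 2: the head $[0,\varepsilon]$.} Form the difference of the $\lambda$- and $\beta$-expressions. Here $\int_0^\varepsilon(e^{-\lambda t}-e^{-\beta t})\bbT_{-1}(t)Bv\dx t$ must be shown to lie in $Z$ and to be mapped by $C_Z$ to the corresponding integral of $k$. I would handle this by using~\ref{prop:kernel-bibo-corrected-SWZ:itm:input-in-Z} for almost every $t$ together with a measurability/approximation argument. Alternatively, I would avoid the head entirely: note that
\[
    F_\varepsilon(\lambda)\coloneqq C_Z\resolvent{\lambda}{A_Z}\bbT_{-1}(\varepsilon)B
\]
satisfies the difference relation~\eqref{eq:transferFunctionDifferenceRelation} with $B$ replaced by $\bbT_{-1}(\varepsilon)B$. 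Then let $\varepsilon\downarrow0$, using $k\in\lpSpace{1}$ and dominated convergence to obtain
\[
    \mathbf{G}(\lambda)-\mathbf{G}(\beta)=\int_0^\infty(e^{-\lambda t}-e^{-\beta t})k(t)\dx t.
\]
Making this limit rigorous, and identifying $\lim_{\varepsilon\downarrow0}$ of $C\bigl[\resolvent{\lambda}{A_{-1}}-\resolvent{\beta}{A_{-1}}\bigr]\bbT_{-1}(\varepsilon)B$ with the original resolvent difference, is the main obstacle. The plan is to use $\resolvent{\lambda}{A_{-1}}-\resolvent{\beta}{A_{-1}}=(\beta-\lambda)\resolvent{\lambda}{A}\resolvent{\beta}{A_{-1}}$, which maps $X_{-1}$ into $X_1$, together with $\bbT_{-1}(\varepsilon)\to\id$ strongly on $X_{-1}$. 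The expression $\bigl[\resolvent{\lambda}{A_{-1}}-\resolvent{\beta}{A_{-1}}\bigr]\bbT_{-1}(\varepsilon)Bv$ then converges in $X_1$, and $C$ is continuous on $X_1$.

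\textbf{Step 3: conclusion.} Set
\[
    D\coloneqq\mathbf{G}(\beta)-\int_0^\infty e^{-\beta t}k(t)\dx t\in\calL(U,Y),
\]
which is independent of $\beta$ by Step 2. This gives the desired Laplace representation for large real $\lambda$, and by analyticity on all of $\re\lambda>\max\{\gbd(\bbT),0\}$. Both sides are analytic there: the integral converges since $k\in\lpSpace{1}$, and the identity theorem extends it from a real ray. Pairing with $y'$ verifies the hypothesis of Lemma~\ref{lem:weak-laplace-output-representation}, which finishes the proof.
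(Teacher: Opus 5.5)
Your proposal is correct and follows essentially the paper's proof, provided you commit to the ``alternative'' in Step~2. That route shifts the control operator to $\bbT_{-1}(\varepsilon)B\in\calL(U,Z)$ along $\varepsilon\downarrow0$ in the co-null set from~(c), uses the Laplace formula for $\bbT_Z$ and continuity of $C_Z$, and passes to the limit via the bounded resolvent difference $X_{-1}\to X_1$ together with $F_\varepsilon(\lambda)=e^{\lambda\varepsilon}\int_\varepsilon^\infty e^{-\lambda t}k(t)\dx t\to\int_0^\infty e^{-\lambda t}k(t)\dx t$; it then finishes with analytic continuation and Lemma~\ref{lem:weak-laplace-output-representation}. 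Drop the first option of putting the head integral over $[0,\varepsilon]$ directly into $Z$: that is exactly the unjustified step the paper warns about, since $\bbT_{-1}(\argument)Bv$ need not be Bochner integrable in $Z$ near $0$.
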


\begin{proof}
    Set $k\coloneqq C_Z\bbT_{-1}(\argument)B$ and
    $\mathbf{H}(s)\coloneqq
    \int_0^\infty e^{-st}k(t)\dx{t}$ for $\re s>0$.
    Put $\alpha\coloneqq
    \max\{\gbd(\bbT),\gbd(\bbT_Z),0\}$.

    \emph{Step 1}:
    We modify the resolvent argument in
    \cite[Proposition~4.3]{SchwenningerWierzbaZwart2024}
    by first shifting the control operator.
    By assumption
    \ref{prop:kernel-bibo-corrected-SWZ:itm:input-in-Z},
    there exists a sequence $t_n>0$ with $t_n\to0$ such that
    $B_n\coloneqq\bbT_{-1}(t_n)B\in\calL(U,Z)$.
    Then
    $C_Z\bbT_Z(t)B_n
    =C_Z\bbT_{-1}(t+t_n)B=k(t+t_n)$,
    and hence
    $C_Z\bbT_Z(\argument)B_n
    \in \lpSpaceDT{1}{\bbR_+}{\calL(U,Y)}$.
    Thus
    $\mathbf{H}_n(s)\coloneqq
    \int_0^\infty e^{-st}C_Z\bbT_Z(t)B_n\dx{t}$
    exists as an integral in $\calL(U,Y)$ for $\re s>0$.

    Furthermore, since $(\bbT_Z(t))_{t\ge0}$ is a
    $C_0$-semigroup and $Z$ embeds continuously into $X_{-1}$,
    the Laplace representation for the resolvent gives, for
    $s\in\bbC_\alpha$ and $z\in Z$,
    \begin{equation*}
        \resolvent{s}{A_{-1}}z
        =
        \int_0^\infty e^{-st}\bbT_Z(t)z\dx{t}
    \end{equation*}
    as an integral in $Z$.
    By continuity of $C_Z$, we therefore obtain
    $\mathbf{H}_n(s)u
    =C_Z\resolvent{s}{A_{-1}}B_nu$.

    Since the product of two extrapolated resolvents maps
    $X_{-1}$ continuously into $X_1$, the resolvent identity
    and the fact that $C_Z$ extends $C$ give
    \begin{align*}
        (\mathbf{H}_n(s)-\mathbf{H}_n(r))u
        &=
        C_Z\left(
            \resolvent{s}{A_{-1}}
            -\resolvent{r}{A_{-1}}
        \right)B_nu
        =
        (r-s)C
        \resolvent{s}{A_{-1}}
        \resolvent{r}{A_{-1}}B_nu
    \end{align*}
    for every $u\in U$ and $r,s\in\bbC_\alpha$.

    A change of variables gives
    $\mathbf{H}_n(s)
    =e^{st_n}\bigl(
        \mathbf{H}(s)-\int_0^{t_n}e^{-st}k(t)\dx{t}
    \bigr)$.
    Thus $\mathbf{H}_n(s)\to\mathbf{H}(s)$ in $\calL(U,Y)$
    for every $\re s>0$.
    Moreover, $B_nu\to Bu$ in $X_{-1}$ by strong continuity
    of $\bbT_{-1}$.
    Since the operator
    $C\resolvent{s}{A_{-1}}\resolvent{r}{A_{-1}}$
    is bounded from $X_{-1}$ to $Y$, passing to the limit yields
    \begin{equation*}
        (\mathbf{H}(s)-\mathbf{H}(r))u
        =
        (r-s)C
        \resolvent{s}{A_{-1}}
        \resolvent{r}{A_{-1}}Bu
        \quad
        \text{for every }u\in U
        \text{ and }r,s\in\bbC_\alpha.
    \end{equation*}
    By the system-node transfer-function identity, this equals
    $(\mathbf{G}(s)-\mathbf{G}(r))u$.
    Consequently, $\mathbf{G}=D+\mathbf{H}$ on $\bbC_\alpha$
    for some $D\in\calL(U,Y)$.
    Analytic continuation extends this identity to
    $\bbC_{\max\{\gbd(\bbT),0\}}$.

    \emph{Step 2}:
    Since $k\in \lpSpaceDT{1}{\bbR_+}{\calL(U,Y)}$, all its scalarisations
    belong to $\lpSpaceDT{1}{\bbR_+}{U'}$.
    Step~1 supplies the compatibility identity required in
    Lemma~\ref{lem:weak-laplace-output-representation}.
    Hence every zero-state generalised output corresponding
    to $u\in \lpSpace{1}_{\loc}(\bbR_+; U)$ is represented by $Du+k*u$.
    For $u\in \lpSpace{\infty}_{\loc}(\bbR_+; U)$, Young's inequality
    gives
    $\norm{y_u}_{\lpSpaceDT{\infty}{[0,t]}{Y}}
    \le(\norm{D}_{\calL(U,Y)}+\norm{k}_{\lpSpace{1}})
    \norm{u}_{\lpSpaceDT{\infty}{[0,t]}{U}}$
    for every $t>0$.
    This proves $\lpSpace{\infty}$-BIBO stability.

    Finally, for every real
    $\lambda_0>\max\{\gbd(\bbT),0\}$, the identity
    $D=\mathbf{G}(\lambda_0)-\mathbf{H}(\lambda_0)$ implies
    $\norm{D}_{\calL(U,Y)}+\norm{k}_{\lpSpace{1}}
    \le
    \norm{\mathbf{G}(\lambda_0)}_{\calL(U,Y)}
    +2\norm{k}_{\lpSpace{1}}$.
\end{proof}

\begin{remarks}
    \label{rem:kernel-bibo-corrected-SWZ}
    (a) Let $\systemNodeSt$ be a system node such that $\left( \bbT(t) \right)_{t\ge0}$ is analytic and $C\bbT_{-1}(\argument)B\in \lpSpaceDT{1}{\bbR_+}{\calL(U,Y)}$. Then conditions~\ref{prop:kernel-bibo-corrected-SWZ:itm:embeddings}--\ref{prop:kernel-bibo-corrected-SWZ:itm:Z-invariant} in Proposition~\ref{prop:kernel-bibo-corrected-SWZ} are fulfilled with $Z= X_1$. Consequently, $\systemNodeSt$ is $\lpSpace{\infty}$-BIBO stable.
    
    (b) Under the hypotheses of
    Proposition~\ref{prop:kernel-bibo-corrected-SWZ}, set
    $k\coloneqq C_Z\bbT_{-1}(\argument)B$.
    Its proof shows that there exists $D\in\calL(U,Y)$ such that
    $\mathbf{G}(\lambda)=D+\int_0^\infty e^{-\lambda t}k(t)\dx t$
    for $\re\lambda>\max\{\gbd(\bbT),0\}$, and every
    zero-state generalised output corresponding to
    $u\in \lpSpace{\infty}_{\loc}(\bbR_+; U)$ is represented by
    \[
        y(t)=Du(t)+\int_0^t k(t-s)u(s)\dx s
        \quad \text{for almost every }t>0.
    \]
    The BIBO constant is at most $\norm{D}_{\calL(U,Y)}+\norm{k}_{\lpSpaceDT{1}{\bbR_+}{\calL(U,Y)}} \le 2\norm{k}_{\lpSpace{1}}+\norm{\mathbf{G}(\lambda_0)}_{\calL(U,Y)}$ for every real $\lambda_0>\max\{\gbd(\bbT),0\}$. 
\end{remarks}

We now replace integrability of the kernel in operator norm
by integrability after output scalarisation.
This weaker condition still yields $\mathrm C^\infty$-BIBO
stability by Theorem~\ref{thm:OutputScalarisationGivesBIBO}.

\begin{corollary}
    \label{cor:weak-dual-kernel-cinfty-bibo}
    Let $\systemNodeSt$ be a system node and let $Z$ be a Banach space
    satisfying conditions~\ref{prop:kernel-bibo-corrected-SWZ:itm:embeddings}--\ref{prop:kernel-bibo-corrected-SWZ:itm:Z-invariant} in Proposition~\ref{prop:kernel-bibo-corrected-SWZ}.
    If $\bigl(C_Z\bbT_{-1}(\argument)B\bigr)'y'\in \lpSpaceDT{1}{\bbR_+}{U'}$ for each $y'\in Y'$, then $\systemNodeSt$ is $\mathrm C^\infty$-BIBO stable.
\end{corollary}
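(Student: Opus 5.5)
The plan is to reduce to Theorem~\ref{thm:OutputScalarisationGivesBIBO}. It suffices to show that for every $y'\in Y'$ the scalar-output system node $\systemNode{A}{B}{y'C}{y'\mathbf{G}}$ on $(U,X,\bbC)$ is $\mathrm C^\infty$-BIBO stable. For this I would apply Proposition~\ref{prop:kernel-bibo-corrected-SWZ} to the derived system, using the same space $Z$. Conditions~\ref{prop:kernel-bibo-corrected-SWZ:itm:embeddings}, \ref{prop:kernel-bibo-corrected-SWZ:itm:input-in-Z} and~\ref{prop:kernel-bibo-corrected-SWZ:itm:Z-invariant} involve only $A$, $B$ and $Z$, so they carry over unchanged. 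For condition~\ref{prop:kernel-bibo-corrected-SWZ:itm:observation-extends}, the observation operator $y'C\in\calL(X_1,\bbC)$ has the extension $y'C_Z\in\calL(Z,\bbC)$.

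The kernel of the derived system is $t\mapsto y'C_Z\bbT_{-1}(t)B\in\calL(U,\bbC)=U'$. This is exactly $\bigl(C_Z\bbT_{-1}(t)B\bigr)'y'$, which by hypothesis lies in $\lpSpaceDT{1}{\bbR_+}{U'}$. Since $\calL(U,\bbC)$ is isometrically $U'$, this is precisely the integrability assumption $y'C_Z\bbT_{-1}(\argument)B\in\lpSpaceDT{1}{\bbR_+}{\calL(U,\bbC)}$ required by Proposition~\ref{prop:kernel-bibo-corrected-SWZ}. Hence each derived scalar-output system is $\lpSpace{\infty}$-BIBO stable.

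It remains to pass from $\lpSpace{\infty}$- to $\mathrm C^\infty$-BIBO stability for the derived systems. This holds because a classical solution with input in $\mathrm C_c^\infty((0,\infty);U)$ is also a generalised solution, and the output is continuous, so the $\lpSpace{\infty}$ representative coincides with the classical output almost everywhere. Alternatively, Theorem~\ref{thm:finiteDimBIBOCharacterisation} does not directly apply here, since $U$ may be infinite-dimensional; the direct argument just given suffices. Theorem~\ref{thm:OutputScalarisationGivesBIBO} then yields $\mathrm C^\infty$-BIBO stability of $\systemNodeSt$.

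The only delicate point is a measurability issue. Proposition~\ref{prop:kernel-bibo-corrected-SWZ} asks for Bochner integrability in $\calL(U,\bbC)$, so one must check that $y'C_Z\bbT_{-1}(\argument)B$ is strongly measurable as a $U'$-valued function. This is exactly what the hypothesis ``$\in\lpSpaceDT{1}{\bbR_+}{U'}$'' provides, since $L^1$ is meant in the Bochner sense, so no further work is needed. Lemma~\ref{lem:weak-kernel-bounded-scalarization} additionally shows that the resulting constants are uniformly bounded in $\norm{y'}\le1$, but that bound is not needed because Theorem~\ref{thm:OutputScalarisationGivesBIBO} supplies uniformity via the uniform boundedness principle.
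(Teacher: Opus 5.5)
Your proposal is correct and follows the paper's proof: apply Proposition~\ref{prop:kernel-bibo-corrected-SWZ} to each scalar-output node $\systemNode{A}{B}{y'C}{y'\mathbf{G}}$ with observation extension $y'C_Z$, using $\calL(U,\bbC)\cong U'$, and then invoke Theorem~\ref{thm:OutputScalarisationGivesBIBO}. You also spell out why $\lpSpace{\infty}$-BIBO stability of the derived systems gives $\mathrm C^\infty$-BIBO stability: classical outputs are generalised outputs and agree almost everywhere with the $\lpSpace{\infty}$ representative. The paper leaves this step implicit.
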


\begin{proof}
    Set $k\coloneqq C_Z\bbT_{-1}(\argument)B$, extending
    it by zero on the exceptional null set.
    For every $y'\in Y'$, the scalarised kernel
    $y'k=k(\argument)'y'$ belongs to
    $\lpSpaceDT{1}{\bbR_+}{\calL(U,\bbC)}$.
    Proposition~\ref{prop:kernel-bibo-corrected-SWZ},
    applied with observation extension $y'C_Z$,
    therefore shows that
    $\systemNode{A}{B}{y'C}{y'\mathbf{G}}$ is
    $\lpSpace{\infty}$-BIBO stable.
    Theorem~\ref{thm:OutputScalarisationGivesBIBO}
    yields $\mathrm C^\infty$-BIBO stability of
    $\systemNodeSt$.
\end{proof}

\begin{remark}
\label{rem:weak-kernel-bibo-bound}
    Under the hypotheses of
    Corollary~\ref{cor:weak-dual-kernel-cinfty-bibo}, set
    $k\coloneqq C_Z\bbT_{-1}(\argument)B$.
    For every real $\lambda_0>\max\{\gbd(\bbT),0\}$,
    an admissible $\mathrm C^\infty$-BIBO constant is
    $2\norm{\calS_k}
    +\norm{\mathbf{G}(\lambda_0)}_{\calL(U,Y)}$.

    Indeed, for $y'\in Y'$ and $\re\lambda>0$, define
    $H_{y'}(\lambda)\coloneqq
    \int_0^\infty e^{-\lambda t}k(t)'y'\dx t$.
    Remark~\ref{rem:kernel-bibo-corrected-SWZ}(b),
    applied to the scalar-output system, gives
    $H_{y'}(\lambda)=y'\mathbf{G}(\lambda)+M_{y'}$
    for some $M_{y'}\in U'$ whenever
    $\re\lambda>\max\{\gbd(\bbT),0\}$.
    Evaluating at $\lambda_0$ yields
    $\norm{M_{y'}}_{U'}
    \le\norm{y'\mathbf{G}(\lambda_0)}_{U'}
    +\norm{\calS_k y'}_{\lpSpace{1}}$.
    Hence the optimal scalar-output BIBO constant satisfies
    \[
        c(y')
        \le
        \norm{M_{y'}}_{U'}+\norm{\calS_k y'}_{\lpSpace{1}}
        \le
        \bigl(
            \norm{\mathbf{G}(\lambda_0)}_{\calL(U,Y)}
            +2\norm{\calS_k}
        \bigr)\norm{y'}_{Y'}.
    \]
    Theorem~\ref{thm:OutputScalarisationGivesBIBO}
    gives the asserted constant for the full system.
\end{remark}

\subsection{Extension to \texorpdfstring{$\lpSpace{\infty}$}{L-infinity}-BIBO stability}
\label{subsec:kernel-linfty-bibo}

By Corollary~\ref{cor:cinfty-bibo-plus-linfty-outputs-implies-linfty-bibo},
the $\mathrm C^\infty$-BIBO estimate extends to
$\lpSpace{\infty}$-BIBO stability once locally Bochner-integrable
output representatives exist.
We consider two sufficient conditions for their existence:
local integrability of the kernel in operator norm and
reflexivity of the output space.

In the setting of
Corollary~\ref{cor:weak-dual-kernel-cinfty-bibo},
local integrability of the kernel in operator norm ensures
the compatibility required in
Lemma~\ref{lem:weak-laplace-output-representation}.
This yields an explicit output representation and
$\lpSpace{\infty}$-BIBO stability.

\begin{corollary}
    \label{cor:weak-laplace-compatibility-linfty-bibo}
    Let $\systemNodeSt$ be a system node, let $Z$ be a Banach space
    satisfying conditions~\ref{prop:kernel-bibo-corrected-SWZ:itm:embeddings}--\ref{prop:kernel-bibo-corrected-SWZ:itm:Z-invariant} in Proposition~\ref{prop:kernel-bibo-corrected-SWZ}, and set
    $k\coloneqq C_Z\bbT_{-1}(\argument)B$.
    Assume further that $k(\argument)'y'\in \lpSpaceDT{1}{\bbR_+}{U'}$ for each $y'\in Y'$ and $k\in \lpSpace{1}_{\loc}(\bbR_+; \calL(U,Y))$.
    Then $\systemNodeSt$ is $\lpSpace{\infty}$-BIBO stable.
\end{corollary}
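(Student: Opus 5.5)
The plan is to reduce everything to Lemma~\ref{lem:weak-laplace-output-representation}. That lemma requires three things: a kernel $k\in \lpSpace{1}_{\loc}(\bbR_+;\calL(U,Y))$, scalarisations $k(\argument)'y'\in\lpSpaceDT{1}{\bbR_+}{U'}$, and a weak Laplace compatibility identity with some $D\in\calL(U,Y)$. The first two are hypotheses here. So the only work is to produce $D$ and verify
\[
    \duality{y'}{\mathbf{G}(\lambda)v}
    =
    \duality{y'}{Dv}
    +
    \int_0^\infty e^{-\lambda t}\duality{k(t)'y'}{v}\dx t
\]
for $\re\lambda>\max\{\gbd(\bbT),0\}$. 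Once this holds, the lemma gives $\lpSpace{\infty}$-BIBO stability with constant $\norm{D}+\norm{\calS_k}$, where $\norm{\calS_k}<\infty$ by Lemma~\ref{lem:weak-kernel-bounded-scalarization}.

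To get the identity, I would rerun Step~1 of the proof of Proposition~\ref{prop:kernel-bibo-corrected-SWZ} weakly, that is, after applying a functional $y'$. Fix $y'\in Y'$ and set $H_{y'}(\lambda)\coloneqq\int_0^\infty e^{-\lambda t}k(t)'y'\dx t\in U'$. Choose $t_n\downarrow0$ with $B_n\coloneqq\bbT_{-1}(t_n)B\in\calL(U,Z)$. Since $y'k(\argument+t_n)$ is integrable, the Laplace-resolvent formula for $\bbT_Z$ and continuity of $y'C_Z$ give, for $s\in\bbC_\alpha$ with $\alpha=\max\{\gbd(\bbT),\gbd(\bbT_Z),0\}$,
\[
    \int_0^\infty e^{-st}y'k(t+t_n)\dx t=y'C_Z\resolvent{s}{A_{-1}}B_n .
\]
Differencing at $s$ and $r$, applying the resolvent identity, and letting $n\to\infty$ (using $B_n\to B$ in $X_{-1}$ and the shift formula for the Laplace integral) yields
\[
    H_{y'}(s)-H_{y'}(r)=y'\bigl(\mathbf{G}(s)-\mathbf{G}(r)\bigr).
\]
This is exactly the argument recorded in Remark~\ref{rem:weak-kernel-bibo-bound}, which gives $H_{y'}(\lambda)=y'\mathbf{G}(\lambda)+M_{y'}$ with $M_{y'}\in U'$.

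The main obstacle is showing that $-M_{y'}$ comes from a single operator $D\in\calL(U,Y)$, rather than from a map $Y'\to U'$ that is merely weak$^*$-type. This is where the local operator-norm integrability of $k$ enters. I would fix a real $\lambda_0>\max\{\gbd(\bbT),0\}$ and aim to show that $\int_0^\infty e^{-\lambda_0 t}k(t)\dx t$ exists as a Bochner integral in $\calL(U,Y)$. The integral over $[0,1]$ converges because $k\in\lpSpace{1}_{\loc}$. On $[1,\infty)$ I would write $k(t)=k(\argument)$ transported by the semigroup, namely $k(t+1)v=C_Z\bbT_Z(t)\bbT_{-1}(1)Bv$, and use the Bochner integrability in $\calL(U,Y)$ on $[1,2]$. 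Splitting $[1,\infty)$ into unit intervals, I expect to bound $\norm{k(t)}$ in operator norm by $\norm{k}_{\lpSpace{1}([0,1])}$-type quantities times $\norm{\bbT_Z(\cdot)}$, after averaging over the shift. The series $\sum_j e^{-\lambda_0 j}\norm{k}_{\lpSpace{1}[j,j+1]}$ then converges once $\lambda_0>\gbd(\bbT_Z)$.

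With this in hand, define $\mathbf{H}(\lambda_0)\coloneqq\int_0^\infty e^{-\lambda_0 t}k(t)\dx t\in\calL(U,Y)$ and $D\coloneqq\mathbf{G}(\lambda_0)-\mathbf{H}(\lambda_0)\in\calL(U,Y)$. Scalarising gives $y'\mathbf{H}(\lambda_0)=H_{y'}(\lambda_0)$, so $M_{y'}=-y'D$ for every $y'$, which is the required identity at $\lambda_0$. The difference relation above propagates it to all $\lambda$ in $\bbC_\alpha$, and analytic continuation extends it to $\re\lambda>\max\{\gbd(\bbT),0\}$. If the weighted Bochner integrability on $[1,\infty)$ proves awkward, the fallback is to define $D$ by the scalar identity $\duality{y'}{Dv}\coloneqq\duality{y'}{\mathbf{G}(\lambda_0)v}-H_{y'}(\lambda_0)v$. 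One would then show that $Dv$ lies in $Y$, not just in $Y''$, by approximating with $\int_0^R e^{-\lambda_0 t}k(t)v\dx t\in Y$ and taking limits. This is the genuinely delicate point of the proof.
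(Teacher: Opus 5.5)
Your reduction matches the paper's proof. Both proofs feed Lemma~\ref{lem:weak-laplace-output-representation} with the scalar identity $H_{y'}(\lambda)=y'\mathbf{G}(\lambda)+M_{y'}$ from Remark~\ref{rem:weak-kernel-bibo-bound}. Both define $D\coloneqq\mathbf{G}(\lambda_0)-H(\lambda_0)$ from an operator-valued Laplace integral at one real $\lambda_0$, read off $M_{y'}=-y'D$, and propagate the identity. The difference lies in how $H(\lambda_0)\in\calL(U,Y)$ is produced.

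Here the mechanism you sketch does not work as stated. In $k(j+s)=C_Z\bbT_Z(j)\bbT_{-1}(s)B$ the operator $C_Z$ sits to the left of the semigroup, so $k$ on $[j,j+1]$ cannot be controlled by $\norm{k}_{\lpSpaceDT{1}{[0,1]}{\calL(U,Y)}}$ times $\norm{\bbT_Z(j)}$. Averaging over the shift only produces $\int_0^1\norm{\bbT_{-1}(s)B}_{\calL(U,Z)}\dx s$, which need not be finite; the paper stresses exactly this before Proposition~\ref{prop:kernel-bibo-corrected-SWZ}.

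The factorisation you wrote does suffice if you use a single good time. Fix $s_0\in(0,1)$ with $\bbT_{-1}(s_0)B\in\calL(U,Z)$, which exists by condition (c). Then for $t\ge s_0$,
\[
\norm{k(t)}_{\calL(U,Y)}\le\norm{C_Z}_{\calL(Z,Y)}\norm{\bbT_Z(t-s_0)}_{\calL(Z)}\norm{\bbT_{-1}(s_0)B}_{\calL(U,Z)}.
\]
Together with $k\in\lpSpace{1}_{\loc}$ on $[0,s_0]$, this gives Bochner integrability of $e^{-\lambda_0\argument}k$ for real $\lambda_0>\max\{\gbd(\bbT),\gbd(\bbT_Z),0\}$. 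One such $\lambda_0$ is enough: your difference identity and analytic continuation then carry $M_{y'}=-y'D$ over to all $\re\lambda>\max\{\gbd(\bbT),0\}$.

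The paper takes what you call the fallback, and it is not delicate. For $\re\lambda=a>0$ and $R>t$, duality gives
\[
\norm{\int_t^R e^{-\lambda s}k(s)\dx s}_{\calL(U,Y)}\le\sup_{\norm{y'}_{Y'}\le1}\int_t^R e^{-as}\norm{k(s)'y'}_{U'}\dx s\le e^{-at}\norm{\calS_k}.
\]
So the truncated Bochner integrals, which exist by local integrability, are Cauchy in operator norm. Their limit $H(\lambda)$ satisfies $\norm{H(\lambda)}\le\norm{\calS_k}$ and $y'H(\lambda)=H_{y'}(\lambda)$. This gives $Dv\in Y$ at once, indeed $D\in\calL(U,Y)$ in operator norm, not merely pointwise in $v$.

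Compared with your repaired route, the paper's argument has several advantages:
\begin{enumerate}[\upshape (i)]
    \item It uses only $\norm{\calS_k}<\infty$ from Lemma~\ref{lem:weak-kernel-bounded-scalarization}.
    \item It needs no growth information on $\bbT_Z$ and no bound on the size of $\bbT_{-1}(s_0)B$ in $Z$.
    \item It works for every $\lambda\in\bbC_0$.
    \item It yields $\norm{D}\le\norm{\mathbf{G}(\lambda_0)}+\norm{\calS_k}$ directly for every real $\lambda_0>\max\{\gbd(\bbT),0\}$.
\end{enumerate}
Your route does give absolute Bochner integrability of the weighted kernel. The price is the extra requirement $\lambda_0>\gbd(\bbT_Z)$ and use of the semigroup structure on $Z$ at this step.
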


\begin{proof}
    \emph{Step 1}:
    Set $M\coloneqq\norm{\calS_k}$, which is finite by Lemma~\ref{lem:weak-kernel-bounded-scalarization}.
    For $\re\lambda=a>0$ and $t>0$, define
    $H_t(\lambda)\coloneqq\int_0^t e^{-\lambda s}k(s)\dx s$.
    By local integrability, this is a Bochner integral in
    $\calL(U,Y)$. For $R>t$, duality gives
    \[
        \norm{H_R(\lambda)-H_t(\lambda)}_{\calL(U,Y)}
        \le
        \sup_{\norm{y'}_{Y'}\le1}
        \int_t^R e^{-as}\norm{k(s)'y'}_{U'}\dx s
        \le e^{-at}M.
    \]
    Hence $H(\lambda)\coloneqq\lim_{t\to\infty}H_t(\lambda)$
    exists in operator norm, and $\norm{H(\lambda)}\le M$.
    This defines an improper integral in operator norm.
    Moreover, $y'H(\lambda)=H_{y'}(\lambda)$ for every $y'\in Y'$,
    with $H_{y'}$ as in Remark~\ref{rem:weak-kernel-bibo-bound}.

    \emph{Step 2}:
    Fix a real $\lambda_0>\max\{\gbd(\bbT),0\}$ and set
    $D\coloneqq\mathbf{G}(\lambda_0)-H(\lambda_0)$.
    Remark~\ref{rem:weak-kernel-bibo-bound} gives
    $M_{y'}=H_{y'}(\lambda_0)-y'\mathbf{G}(\lambda_0)=-y'D$.
    Thus $y'\mathbf{G}(\lambda)=y'D+H_{y'}(\lambda)$ for every
    $y'\in Y'$ whenever $\re\lambda>\max\{\gbd(\bbT),0\}$.
    Consequently,
    \[
        \mathbf{G}(\lambda)=D+H(\lambda)
        \quad \text{for }
        \lambda \in \bbC_{\gbd(\bbT)} \cap \bbC_0.
    \]
    This also shows that $D$ is independent of the choice of
    $\lambda_0$ and that $\norm{D}_{\calL(U,Y)}
    \le\norm{\mathbf{G}(\lambda_0)}_{\calL(U,Y)}+M$.
    Lemma~\ref{lem:weak-laplace-output-representation} then
    gives the output representation and the BIBO constant
    $\norm{D}_{\calL(U,Y)}+M$.
\end{proof}

\begin{remark}
    The proof of
    Corollary~\ref{cor:weak-laplace-compatibility-linfty-bibo}
    also gives $D\in\calL(U,Y)$ such that every zero-state
    generalised output corresponding to
    $u\in \lpSpace{1}_{\loc}(\bbR_+; U)$ is represented by $Du+k*u$.
    An admissible BIBO constant is
    $\norm{D}_{\calL(U,Y)}+\norm{\calS_k}$, and
    \[
        \norm{D}_{\calL(U,Y)}+\norm{\calS_k}
        \le
        2\norm{\calS_k}
        +\norm{\mathbf{G}(\lambda_0)}_{\calL(U,Y)}
    \]
    for every real $\lambda_0>\max\{\gbd(\bbT),0\}$.
\end{remark}

\begin{example}
    Note that Example~\ref{ex:systemNodeCounterexampleBddVariation} is covered by
    Corollary~\ref{cor:weak-laplace-compatibility-linfty-bibo}. Indeed, in that
    example one has
    \[
        k(t)u
        =
        C\bbT(t)Bu
        =
        \left(\xi\mapsto \xi e^{-t\xi}u(\xi)\right),
        \qquad
        t>0.
    \]
    Take $Z=X$ and $C_Z=C$.
    Since $A$, $B$, and $C$ are bounded, the structural
    assumptions of
    Proposition~\ref{prop:kernel-bibo-corrected-SWZ}
    hold, and $k$ is norm-continuous.
    In particular,
    $k\in \lpSpace{1}_{\loc}(\bbR_+; \calL(U,Y))$.
    Identify $y'\in Y'$ with a regular Borel measure $\mu$
    on $[0,1]$.
    Then $k(t)'y'$ has density $\xi e^{-t\xi}$ with respect
    to $\mu$, and depends continuously on $t$ in $U'$.
    Tonelli's theorem gives
    \[
        \int_0^\infty\norm{k(t)'y'}_{U'}\dx t
        =
        \int_{[0,1]}\int_0^\infty
            \xi e^{-t\xi}\dx t\dx\modulus{\mu}(\xi)
        =
        \modulus{\mu}((0,1])
        \le\norm{y'}_{Y'}.
    \]
    Taking $\mu=\delta_1$ shows that $\norm{\calS_k}=1$.
    Moreover, for $\re s>0$,
    \[
        \mathbf{G}(s)u
        =
        \resolvent{s}{A}Bu
        =
        \left(
            \xi\mapsto
            \frac{\xi}{s+\xi}u(\xi)
        \right).
    \]
    Since $\norm{k(t)}\le1$, the integral
    $\int_0^\infty e^{-st}k(t)\dx t$ exists in operator norm
    for $\re s>0$.
    Integrating the multiplier gives
    $\int_0^\infty e^{-st}k(t)\dx t=\mathbf{G}(s)$.
    Thus $D=0$, and
    Corollary~\ref{cor:weak-laplace-compatibility-linfty-bibo}
    gives $\lpSpace{\infty}$-BIBO stability with constant
    $\norm{\calS_k}=1$.
\end{example}

Combining Corollary~\ref{cor:weak-dual-kernel-cinfty-bibo}
with Theorem~\ref{thm:reflexive-output-cinfty-implies-linfty-bibo}
yields the following criterion for reflexive output spaces.

\begin{corollary}
    \label{cor:weak-dual-kernel-linfty-bibo-reflexive}
    Let $\systemNodeSt$ be a system node, let $Z$ be a Banach space
    satisfying conditions~\ref{prop:kernel-bibo-corrected-SWZ:itm:embeddings}--\ref{prop:kernel-bibo-corrected-SWZ:itm:Z-invariant} in Proposition~\ref{prop:kernel-bibo-corrected-SWZ}, such that $\bigl(C_Z\bbT_{-1}(\argument)B\bigr)'y'\in \lpSpaceDT{1}{\bbR_+}{U'}$ for each $y'\in Y'$.
    If $Y$ is reflexive, then $\systemNodeSt$ is
    $\lpSpace{\infty}$-BIBO stable.
\end{corollary}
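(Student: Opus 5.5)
The plan is to chain two results already established: Corollary~\ref{cor:weak-dual-kernel-cinfty-bibo} and Theorem~\ref{thm:reflexive-output-cinfty-implies-linfty-bibo}. The hypotheses of the present statement coincide with those of Corollary~\ref{cor:weak-dual-kernel-cinfty-bibo}. These are a Banach space $Z$ satisfying conditions~\ref{prop:kernel-bibo-corrected-SWZ:itm:embeddings}--\ref{prop:kernel-bibo-corrected-SWZ:itm:Z-invariant} of Proposition~\ref{prop:kernel-bibo-corrected-SWZ}, together with the requirement that every output scalarisation $\bigl(C_Z\bbT_{-1}(\argument)B\bigr)'y'$ lies in $\lpSpaceDT{1}{\bbR_+}{U'}$. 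Hence that corollary yields $\mathrm C^\infty$-BIBO stability of $\systemNodeSt$ with some constant $c\ge0$.

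By Remark~\ref{rem:weak-kernel-bibo-bound}, one can take explicitly
\[
c=2\norm{\calS_k}+\norm{\mathbf{G}(\lambda_0)}_{\calL(U,Y)},
\]
where $k=C_Z\bbT_{-1}(\argument)B$ and $\lambda_0>\max\{\gbd(\bbT),0\}$ is real. Here $\norm{\calS_k}<\infty$ by Lemma~\ref{lem:weak-kernel-bounded-scalarization}.

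Since $Y$ is assumed reflexive, Theorem~\ref{thm:reflexive-output-cinfty-implies-linfty-bibo} then upgrades $\mathrm C^\infty$-BIBO stability to $\lpSpace{\infty}$-BIBO stability with the same constant $c$. This finishes the argument. As a byproduct, the constant above is also an admissible $\lpSpace{\infty}$-BIBO constant.

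There is no real obstacle here, since all the analytic work has already been done. The nontrivial point hidden in the chain is the existence of locally Bochner-integrable output representatives. Example~\ref{ex:c-infty-but-not-l-infty} shows these may fail to exist without reflexivity, and Remark~\ref{rem:output-scalarisation-only-gives-cinfty} shows scalar-output information alone cannot supply them. In the proof of Theorem~\ref{thm:reflexive-output-cinfty-implies-linfty-bibo} they are obtained through weak compactness of bounded sets in $\lpSpaceDT{2}{[0,t]}{Y}$, which is where reflexivity of $Y$ enters. The only thing to check is that no extra hypothesis of Corollary~\ref{cor:weak-laplace-compatibility-linfty-bibo} is needed. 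In particular, local operator-norm integrability of $k$ is not required, because the route through Theorem~\ref{thm:reflexive-output-cinfty-implies-linfty-bibo} never uses an explicit kernel representation of the output.
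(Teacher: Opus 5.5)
Your proposal is correct and follows the paper's route exactly: the paper obtains this corollary by chaining Corollary~\ref{cor:weak-dual-kernel-cinfty-bibo}, which gives $\mathrm C^\infty$-BIBO stability, with Theorem~\ref{thm:reflexive-output-cinfty-implies-linfty-bibo}, which upgrades it to $\lpSpace{\infty}$-BIBO stability for reflexive $Y$. Your explicit constant from Remark~\ref{rem:weak-kernel-bibo-bound} is also consistent with the paper's subsequent remark, whose constant $\norm{\calS_k}+\norm{D}_{\calL(U,Y)}$ is bounded by $2\norm{\calS_k}+\norm{\mathbf{G}(\lambda_0)}_{\calL(U,Y)}$.
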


\begin{remark}
    \label{rem:reflexive-separability-weaker}
    Under the hypotheses of
    Corollary~\ref{cor:weak-dual-kernel-linfty-bibo-reflexive},
    set $k= C_Z\bbT_{-1}(\argument)B$ and
    $M\coloneqq\norm{\calS_k}$.
    The identity
    $M_{y'}=H_{y'}(\lambda_0)-y'\mathbf{G}(\lambda_0)$
    from Remark~\ref{rem:weak-kernel-bibo-bound}
    shows that $y'\mapsto M_{y'}$ is linear and bounded.
    By reflexivity of $Y$, there exists $D\in\calL(U,Y)$
    such that $\duality{y'}{Dv}=-M_{y'}v$
    for all $v\in U$ and $y'\in Y'$.
    Moreover,
    \[
    \label{eq:D0-bound}
        \norm{D}_{\calL(U,Y)}
        \le M+\norm{\mathbf{G}(\lambda_0)}_{\calL(U,Y)}
        \qquad
        \bigl(\lambda_0>\max\{\gbd(\bbT),0\},\
              \lambda_0\in\bbR\bigr).
    \]
    Applying Lemma~\ref{lem:weak-laplace-output-representation}
    to each scalar-output system, with kernel $y'k$ and
    feedthrough $y'D$, gives
    \[
        \duality{y'}{y_u(t)}
        =
        \duality{y'}{Du(t)}
        +\int_0^t\duality{k(r)'y'}{u(t-r)}\dx r
    \]
    for every $u\in \lpSpace{\infty}_{\loc}(\bbR_+; U)$,
    every $y'\in Y'$, and almost every $t>0$.
    This weak representation does not require local integrability
    of $k$ in operator norm.

    The scalar-output BIBO constants are at most
    $\norm{y'D}_{U'}
    +\norm{\calS_k y'}_{\lpSpaceDT{1}{\bbR_+}{U'}}
    \le(\norm{D}_{\calL(U,Y)}+M)\norm{y'}_{Y'}$.
    Theorems~\ref{thm:OutputScalarisationGivesBIBO} and~\ref{thm:reflexive-output-cinfty-implies-linfty-bibo}
    therefore give the BIBO constant
    $M+\norm{D}_{\calL(U,Y)}$ for the full system.
\end{remark}

\section*{Acknowledgements} 
    The authors thank Jochen Glück and Felix Schwenninger for various fruitful discussions.

\bibliographystyle{plainurl}
\bibliography{literature}

\end{document}